\DeclareSymbolFont{sfletters}{OT1}{cmsmf}{m}{n}
\DeclareMathSymbol{\fff}{\mathord}{sfletters}{"49}
\theoremstyle{plain}
\newtheorem{theorem}{Theorem}[section]
\newtheorem{proposition}[theorem]{Proposition}
\newtheorem{corollary}[theorem]{Corollary}
\newtheorem*{theorem*}{Theorem}
\theoremstyle{definition}
\theoremstyle{remark}
\let\O\undefined
\DeclareMathOperator{\O}{O}
\DeclareMathOperator{\SO}{SO}
\DeclareMathOperator{\GL}{GL}
\DeclareMathOperator{\V}{V}
\DeclareMathOperator{\St}{St}
\DeclareMathOperator{\im}{im}
\DeclareMathOperator{\proj}{proj}
\DeclareMathOperator{\id}{id}
\DeclareMathOperator{\Hom}{Hom}
\DeclareMathOperator{\Skew}{\mathsf{\Lambda}}
\DeclareMathOperator{\tr}{tr}
\DeclareMathOperator{\Gr}{Gr}
\DeclareMathOperator{\diag}{diag}
\DeclareMathOperator{\rank}{rank}
\DeclareMathOperator{\Rie}{\mathsf{Rie}}
\DeclareMathOperator{\Ric}{\mathsf{Ric}}
\DeclareMathOperator{\Sca}{\mathsf{Sca}}
\DeclareMathOperator{\sff}{\fff\fff}
\DeclareMathOperator{\tff}{\fff\fff\fff}
\DeclareMathOperator{\dive}{div}
\DeclareMathOperator{\End}{End}
\DeclareMathOperator{\oGr}{\stackrel{\smash[t]{\sim}}{\smash{\operatorname{Gr}}\rule{0pt}{1.1ex}}}
\newcommand{\grad}[1]{\nabla_{\!#1}}
\newcommand{\lgrad}[1]{\leftidx{_#1\!}{\nabla}}
\newcommand{\quo}[3]{#1(#2)\! \! \bigm/ \! \! \bigl( #1(#3) \times #1(#2 - #3) \bigr)}
\newcommand{\sonk}{\mathfrak{so}(k) \oplus  \mathfrak{so}(n - k)}
\newcommand{\sosonk}{\mathfrak{so}(n) \!\! \bigm/ \!\! \bigl( \mathfrak{so}(k) \oplus  \mathfrak{so}(n - k) \bigr)}
\newcommand{\tp}{{\scriptscriptstyle\mathsf{T}}}
\newcommand{\lb}{\llbracket}
\newcommand{\rb}{\rrbracket}
\let\latexcirc=\circ
\newcommand{\ccirc}{\mathbin{\mathchoice
  {\xcirc\scriptstyle}
  {\xcirc\scriptstyle}
  {\xcirc\scriptscriptstyle}
  {\xcirc\scriptscriptstyle}
}}
\newcommand{\xcirc}[1]{\vcenter{\hbox{$#1\latexcirc$}}}
\let\circ\ccirc
\begin{document}
\title{Simple matrix expressions for the curvatures of Grassmannian}
\author[Z.~Lai]{Zehua~Lai}
\address{Department of Mathematics, University of Texas, Austin, TX 78712}
\email{zehua.lai@austin.utexas.edu}
\author[L.-H.~Lim]{Lek-Heng~Lim}
\address{Computational and Applied Mathematics Initiative, Department of Statistics,
University of Chicago, Chicago, IL 60637-1514}
\email{lekheng@uchicago.edu}
\author[K.~Ye]{Ke Ye}
\address{State Key Laboratory of Mathematical Sciences, Academy of Mathematics and Systems Science, Chinese Academy of Sciences, Beijing 100190, China}
\email{keyk@amss.ac.cn}

\begin{abstract}
We show that modeling a Grassmannian as symmetric orthogonal matrices $\Gr(k,\mathbb{R}^n) \cong\{Q \in  \mathbb{R}^{n \times n} : Q^\tp Q = I, \; Q^\tp = Q,\; \tr(Q)=2k - n\}$ yields exceedingly simple matrix formulas for various curvatures and curvature-related quantities, both intrinsic and extrinsic. These include Riemann, Ricci, Jacobi, sectional, scalar,  mean, principal, and Gaussian curvatures; Schouten, Weyl, Cotton, Bach, Pleba\'nski, cocurvature, nonmetricity, and torsion tensors; first, second, and third fundamental forms; Gauss and Weingarten maps; and upper and lower delta invariants. We will derive explicit, simple expressions for the aforementioned quantities in terms of standard matrix operations that are stably computable with numerical linear algebra. Many of these aforementioned quantities have never before been presented for the Grassmannian.
\end{abstract}

\maketitle

\section{Introduction}\label{sec:intro}

While pure mathematicians typically abhor picking coordinates for manifolds, this is all but inevitable in applied mathematics. A good choice of extrinsic coordinates facilitates computations for the applied mathematician and, as we will see in this article, provides transparent, easy-to-calculate expressions that are useful even for investigations in pure mathematics. 

For the Grassmannian of $k$-planes in $\mathbb{R}^n$, we showed in \cite{ZLK20} that points on the manifold may be represented by matrices $Q \in \mathbb{R}^{n \times n}$ that are (i) symmetric $Q^\tp = Q$, (ii) orthogonal $Q^\tp Q = I$, (iii) involutive $Q^2 = I$. Clearly any two of these conditions imply the third and thus
\begin{equation}\label{eq:inv0}
\Gr(k,\mathbb{R}^n) \cong \{Q \in \mathbb{S}^n : Q^2 =I,\; \tr(Q)=2k - n\} \eqqcolon \Gr(k,n)
\end{equation}
where $\mathbb{S}^n$ denotes the Euclidean space of $n \times n$ symmetric matrices.
Our motivation in \cite{ZLK20} was largely computational --- such a coordinate representation of points in $\Gr(k,\mathbb{R}^n)$ by orthogonal matrices gives immeasurably stabler numerical algorithms compared to other models of the Grassmannian as projection matrices or equivalence classes of matrices.

The goal of this article is to show that, even for calculations by hand, the \emph{involution model} \eqref{eq:inv0} provides a significant advantage over, say, expressions in \cite{Strichartz88}, which are supposedly simple and already given in terms of linear algebra. Henceforth we define $\Gr(k,n)$ to be the set of matrices on the right of \eqref{eq:inv0} to distinguish it from $\Gr(k,\mathbb{R}^n)$, the Grassmannian as an abstract manifold. In our earlier work \cite{ZLK20}, we derived expressions for basic quantities related to optimization:  tangent vector,  normal vector, metric, exponential map, geodesic, parallel transport, gradient, Hessian, etc, and showed that they all have simple, easily computable expressions in the involution model. Here we will do the same for various types of curvatures, some of which are notoriously difficult to calculate, but it is nevertheless rewarding as curvatures are likely the most important geometric objects of a Riemannian manifold. One might even argue that Riemannian geometry was created to provide a rigorous platform for studying curvatures.

A secondary goal is to illustrate the ease of using the involution model \eqref{eq:inv0}. That $\Gr(k, \mathbb{R}^n)$ can be realized as a linear section of $\O(n)$, one of the most familiar and best-studied classical Lie group, is a feature unique to the involution model. We believe that many of the expressions derived in this article would be more difficult, some nearing impossible, to derive in other common models of the Grassmannian --- as submanifolds of projective spaces, as various homogeneous spaces, or as a manifold of orthogonal projectors, all discussed in Section~\ref{sec:other}. Moreover, the expressions that we obtained are also more user-friendly, as we will elaborate below after presenting them in Table~\ref{tab:c}.

\begin{table*}[h]
\footnotesize
\caption{The point $Q \in \Gr(k,n)$, tangent vectors $X,Y,Z,W \in \mathbb{T}_{Q} \Gr(k,n)$, and normal vector $H \in \mathbb{N}_{Q} \Gr(k,n)$ are parameterized as in \eqref{eq:para}.}
\tabulinesep=0.75ex
\begin{tabu}{@{}lll}
\textsc{curvature} & \textsc{expression}  & \textsc{result} \\\tabucline[1pt ] -
first fundamental form & $\fff(X, Y) = 2 \tr(X_0^\tp Y_0)$  & Proposition~\ref{prop:fff} \\
second fundamental form & $\sff(X,Y) =\dfrac{1}{2} V \begin{bmatrix}
-X_0 Y_0^\tp - Y_0 X_0^\tp & 0 \\
0 & X_0^\tp Y_0 + Y_0^\tp X_0
\end{bmatrix} V^\tp$  & Theorem~\ref{thm:2ndFF}\\
third fundamental form & $\tff(X,Y) = \left( \dfrac{n}{2k(n-k)} - \dfrac{n-2}{4} \right) \tr(X_0^\tp Y_0)$  & Corollary~\ref{cor:third}\\
Gauss map & $\mathsf{\Gamma} (Q) = \biggl\lbrace V  
\begin{bmatrix}
H_1 & 0 \\
0 & H_2 
\end{bmatrix}  V^\tp  : H_1 \in \mathbb{S}^k,\; H_2 \in \mathbb{S}^{n-k} \biggr\rbrace$ & Proposition~\ref{prop:Gmap} \\
Weingarten map & $\mathsf{S}(H)(X) = \dfrac12 V
\begin{bmatrix}
0 &  X_0 H_2 - H_1 X_0  \\
( X_0 H_2 - H_1 X_0 )^\tp & 0
\end{bmatrix} V^\tp$  & Corollary~\ref{cor:wein} \\
mean curvature vector & $
\mathsf{H} = \dfrac{1}{2k(n-k)} V\begin{bmatrix}
-(n - k) I_k & 0 \\
0 & k I_{n-k}
\end{bmatrix} V^\tp$  &  Corollary~\ref{cor:meancurv}\\
mean curvature & $\mathsf{H}(H) = \dfrac{ (k - n) \tr H_1 + k \tr H_2 }{2k(n-k)}$  & Corollary~\ref{cor:meancurv}\\
Gaussian curvature & $\mathsf{G}(H) = \dfrac{1}{2^{k(n-k)}}\prod_{i=1}^k \prod_{j=1}^{n-k} (\lambda_{k+j} - \lambda_i)$  & Corollary~\ref{cor:prin}\\
principal curvature & $\upkappa_{ij}(H) = \dfrac{1}{2}(\lambda_{k+j} - \lambda_i)$,\quad  $i=1,\dots, k$, $j = 1,\dots, n-k$  & Corollary~\ref{cor:prin}\\
Riemann curvature & $ \Rie(X,Y,Z, W ) = \dfrac{1}{2}
\tr\bigl( (XY - YX) Z W \bigr)$  & Proposition~\ref{prop:Rcurv}  \\
Jacobi curvature & $ \mathsf{J}(X, Y, Z, W) = \tr(XYZW)  - \tr\Bigl(  Y \Bigl(\dfrac{X Z  + ZX}{2}\Bigr) W  \Bigr) $  & Corollary~\ref{cor:Jcurv}  \\
sectional curvature & $ \upkappa(X,Y) = \dfrac{1}{4} \dfrac{\|XY - YX\|^2}{\lVert X \rVert^2 \lVert Y \rVert^2 - \tr( X  Y )^2}$ & Corollary~\ref{cor:Scurv}\\
Ricci curvature & $\Ric(X,Y) = \dfrac{n-2}{8} \tr(X Y)$  & Corollary~\ref{cor:Ricci} \\
scalar curvature & $\Sca = \dfrac{k(n-k)(n-2)}{8}$  & Corollary~\ref{cor:Ricci}\\
traceless Ricci curvature & $\mathsf{Z}(X,Y) = 0$  & Corollary~\ref{cor:Ricci} \\
upper delta invariant & $\overline{\updelta}_{2,r} = \dfrac{k(n-k)(n-2)}{8}$  & Theorem~\ref{thm:delta}\\
lower delta invariant & $\underline{\updelta}_{2,r} = \dfrac{k(n-k)(n-2)}{8} - \dfrac{r}{4}$  & Theorem~\ref{thm:delta}\\
Schouten curvature & $\mathsf{P}(X,Y) = \dfrac{n-2}{16(k(n-k)-1)} \tr(X Y)$  & Corollary~\ref{cor:Schouten}\\
Cotton curvature & $\mathsf{C}(X,Y,Z) = 0$  & Corollary~\ref{cor:Cotton}\\
Bach curvature & $\mathsf{B}(X,Y) = \dfrac{(n-2)^2}{32(k(n-k) - 2)} \tr(X Y) $  & Corollary~\ref{cor:Bach}  \\
Weyl curvature & $\mathsf{W}(X,Y,Z, W ) =\dfrac{1}{2} \tr \bigl( (XY - YX) Z  W \bigr)$ & Corollary~\ref{cor:Weyl}  \\
& $- \dfrac{(n-2)}{8(k(n-k) - 1)} \bigl( \tr(X Z) \tr(Y W)  - \tr(XW) \tr(Y Z) \bigr)$  &
\end{tabu}
\label{tab:c}
\end{table*}

The precise definitions and conventions in Table~\ref{tab:c} are given in Section~\ref{sec:zoo} and as we will also see therein, every object in this table involves at most one QR decomposition and a handful of matrix multiplications to compute. A few of the curvatures in Table~\ref{tab:c} are extrinsic, i.e., they depend specifically on our model \eqref{eq:inv0}. These include the second and third fundamental forms; the Gauss and Weingarten maps; mean, Gaussian, and principal curvatures. 
They help us better understand the embedded geometry of Grassmannian given by \eqref{eq:inv0} but they also expedite our calculations of the intrinsic curvatures. These include the Riemann, Ricci, sectional, and scalar curvatures; the Schouten, Cotton, Weyl, and Bach tensors; and the upper and lower delta invariants. Extrinsic curvatures in Table~\ref{tab:c} have ambient space $\mathbb{S}^n$ equipped with inner product $\tr(X^\tp Y) = \tr(XY)$.

While the value of an intrinsic curvature is independent of our choice of models, the expression or formula that gives this value is not. As is evident from Table~\ref{tab:c}, the involution model yields simple, stably computable formulas for extrinsic and intrinsic curvatures alike, essentially reducing curvatures of the Grassmannian to matrix analysis and their computations to numerical linear algebra. For example, computing the value of the Riemann curvature, a daunting order-$4$ tensor, is a trivial one-line calculation using our formula (and the proof of this formula is notably also a one-liner). For contrast, we will show  in Section~\ref{sec:quot} what the corresponding calculation would entail if we use the most common Grassmannian model $\quo{\O}{n}{k}$.

\subsection*{What's new}

For the intrinsic curvatures,  the Schouten, Cotton, Weyl, Bach curvature tensors, the upper and lower delta invariants have never been explicitly calculated for a Grassmannian to the best of our knowledge.  The formulas for Riemann,  Ricci,  sectional, and scalar curvatures of the Grassmannian modeled as various quotient spaces (see Section~\ref{sec:quot}) are well-known and classical \cite{Cartan46, do1992, Samelson58, Wong68} and they have also been calculated in  \cite{BZA24,AI21} for the projection model (see Section~\ref{sec:proj}).  The novelty of our calculations for these is that we derived intrinsic curvatures from extrinsic curvatures. This is why we will calculate extrinsic curvatures first.

The formulas for the extrinsic curvatures --- second and  third fundamental forms; Gauss and Weingarten maps; mean, Gaussian, and principal curvatures ---  are all new. Of course this is just a consequence of the relative obscurity of the involution model \eqref{eq:inv0}. Unlike intrinsic invariants, extrinsic ones are  model-dependent, and it is expected that these have never been calculated for a new model. Also, extrinsic curvatures do not apply to the quotient models in Section~\ref{sec:quot} as they are only defined for embedded manifolds.

We emphasize that by `formula' we mean an explicit expression like those in Table~\ref{tab:c}, involving actual matrices, and has no undetermined quantities. These curvatures may of course be expressed in terms of \emph{local coordinates} or \emph{equivalence classes} or \emph{horizontal spaces}, but these invariably require additional computational overhead, which we will discuss in Section~\ref{sec:other}. Our formulas do not contain ambiguities that require further choices and effort to resolve.

In addition to the curvatures in Table~\ref{tab:c}, we will also discuss the cocurvature, nonmetricity, torsion, and Pleba\'nski tensors. We will see in Proposition~\ref{prop:zero} and Corollary~\ref{cor:Pleb} that they are trivially zero. We also proved in Corollary~\ref{cor:nullity} that the index of relative nullity vanishes; in Corollary~\ref{cor:Codazzi} that the third fundamental form, the Ricci curvature, the Schouten and Bach tensors are all Codazzi tensors;  and in Corollary~\ref{cor:divfree} that the Riemann and Weyl curvatures are divergence-free.

Although the goal of our article is not pure mathematical investigations of the Grassmannian, the  values of these curvature as well as the involution model itself can nevertheless be useful towards this end. The current record for the geodesic embedding of $2$-spheres into a Grassmannian is that $\Gr(k,\mathbb{R}^n)$ contains \emph{one} geodesic $2$-sphere \cite{Wong61}. We will show in Theorem~\ref{thm:packing}, reproduced below for easy reference, that this result can be vastly improved:
\begin{theorem*}[Embedding products of geodesic $2$-spheres]
\begin{enumerate}[\upshape (a)]
\item For any $r \le \min\{ \lfloor k/2 \rfloor,  \lfloor n/4 \rfloor \}$, the product of $r$ copies of $\mathrm{S}^2$ can be embedded as a totally geodesic submanifold of $\Gr(k,\mathbb{R}^n)$.
\item For any $r \le 2\lfloor k/2 \rfloor \lfloor (n-k)/2 \rfloor$, an open subset of the product of $r$ copies of $\mathrm{S}^2$ can be embedded as a totally geodesic submanifold  of $\Gr(k,\mathbb{R}^n)$.
\end{enumerate}
\end{theorem*}
As the reader will see, the proof of the second statement specifically exploits the matrix structure of the involution model \eqref{eq:inv0}.

\section{Notations and conventions}\label{sec:note}

In this article, we use blackboard bold fonts for vector spaces (e.g., tangent and normal spaces, space of $n \times n$ symmetric matrices, etc.) and san serif fonts for all curvatures and curvature-related quantities (e.g., Table~\ref{tab:c}). The Riemann, Ricci, and scalar curvatures, arguably the three most important quantities, are given three-letter notations $\Rie, \Ric, \Sca$ for emphasis.

We write $\mathbb{E}^m$ for a Euclidean space of dimension $m$ equipped with its Euclidean inner product $\langle \, \cdot, \cdot \, \rangle$. For concreteness, one may assume that this Euclidean space is $\mathbb{R}^n$ with $\langle x,y\rangle = x^\tp y$, or $\mathbb{S}^n$ with $\langle X,Y\rangle = \tr(XY)$, or $\mathbb{R}^{m \times n}$ with $\langle X,Y\rangle = \tr(X^\tp Y)$.  We write $\proj_{\mathbb{W}} : \mathbb{E}^m \to \mathbb{E}^m$ for the orthogonal projection onto a subspace $\mathbb{W}  \subseteq \mathbb{E}^m$. The space of all linear maps between vector spaces $\mathbb{V}$ and $\mathbb{W}$ will be denoted $\Hom(\mathbb{V}, \mathbb{W})$ with $\Hom(\mathbb{V}, \mathbb{V})$ denoted specially as $\End(\mathbb{V})$. We write $\id$ for the identity map on any set.

For $X,Y \in \mathbb{R}^{n \times n}$, we write $[X,Y] = XY - YX$ for the commutator. We write $\mathfrak{so}(n) = \{X \in \mathbb{R}^{n \times n} : X^\tp = -X \}$ for the special orthogonal Lie algebra, i.e., the set of skew-symmetric matrices with $[ \, \cdot, \cdot \, ]$ as its Lie bracket.

We write $\mathcal{M}$ for a smooth manifold, $C^{\infty}(\mathcal{M})$ for its ring of smooth real-valued functions, $\mathbb{T}_x \mathcal{M} $ for its tangent space at $x \in \mathcal{M}$, and $\mathcal{X} (\mathcal{M})$ for its $C^\infty(\mathcal{M})$-module of smooth vector fields. If $\mathcal{M}$ is embedded in some ambient manifold, we write $ \mathbb{N}_x \mathcal{M}$ for its normal space at $x \in \mathcal{M}$.

We denote vector fields with an arrow like $\vec{v}$. The word \emph{tensor} in this article will always mean a tensor over a $C^\infty(\mathcal{M})$-module, i.e., a smooth tensor field on $\mathcal{M}$, and will be cast in the form of multilinear maps between tangent and normal spaces. With few exceptions, all multilinear maps in this article are defined at a specific point $x \in \mathcal{M}$. So as not to be overly verbose, we write ``on/of $\mathcal{M}$'' when we mean ``on/of $\mathcal{M}$ at $x$'' and we sometimes drop the subscript $x$ like in Table~\ref{tab:c} when there is no cause for confusion.

We write $\Gr(k,\mathbb{V})$ for the Grassmannian of $k$-dimensional subspaces in the vector space $\mathbb{V}$. We emphasize that in this article,
\begin{equation}\label{eq:inv}
\Gr(k,n) \coloneqq \{Q \in \mathbb{S}^n : Q^2 =I,\; \tr(Q)=2k - n\},
\end{equation}
i.e., $\Gr(k,n) \subseteq \mathbb{S}^n$ is the image of the embedding
\begin{equation}\label{eq:embed}
\varepsilon : \Gr(k,\mathbb{R}^n)  \to \mathbb{S}^n, \quad \mathbb{W} \mapsto P_{\mathbb{W}} - P_{\mathbb{W}^\perp} \eqqcolon Q_{\mathbb{W}},
\end{equation}
where $P_\mathbb{W} \in \mathbb{S}^n$ is the orthogonal projection matrix with image $\mathbb{W}$. So $\varepsilon$ sends a $k$-dimensional subspace $\mathbb{W} \subseteq \mathbb{R}^n$ to a matrix $Q_{\mathbb{W}} \in \mathbb{S}^n$. It is easy to verify \cite{ZLK20} that $Q_{\mathbb{W}}$ has the properties in \eqref{eq:inv}, $\varepsilon$ gives an embedding of Riemannian manifolds, and
\[
\Gr(k,n)  = \varepsilon\bigl(\Gr(k, \mathbb{R}^n) \bigr).
\]

%%%%%%%%%%%%%%%%%%%%%%%%%%%%%%%%%%%%%%%%%%%%%%%%%%%%%%%%%%%%%%%%%%%%%%%%%%%%%%%%%%%%%%%%%%%%%%%%%%%%%%%%
\section{Curvature zoo}\label{sec:zoo}

We will review the definitions of various curvatures and curvature-related quantities. This section is not intended to be pedagogical, and only contains minimal commentaries. The goal is just to collect definitions scattered across standard references \cite{besse2007, do1992, KN, KMS93, Lee18, Petersen16} and some slightly less standard ones \cite{chen93,chen08,CK52,nonmet,Obata68} and present them in a unified set of notations (see Section~\ref{sec:note}) for the reader's easy reference.

All discussions below assume that $\mathcal{M}$ is a Riemannian manifold with Riemannian metric $\mathsf{g}$, i.e., $\mathsf{g}_x :  \mathbb{T}_x \mathcal{M} \times  \mathbb{T}_x \mathcal{M} \to \mathbb{R}$ defines an inner product at $x \in \mathcal{M}$. Section~\ref{sec:izoo} applies to $\mathcal{M}$ intrinsically. Section~\ref{sec:ezoo} applies when  $\mathcal{M}$ is embedded in an Euclidean space $\mathbb{E}^m$. More precisely by embedding we always mean an isometric embedding $\varepsilon : \mathcal{M} \to \mathbb{E}^m$ preserving the Riemannian metric, i.e., $\mathsf{g}_x(v,w) = \langle d_x\varepsilon(v), d_x\varepsilon(w) \rangle$ for all $x \in \mathcal{M}$, $v,w \in \mathbb{T}_x \mathcal{M}$. Henceforth, we will identify $\mathcal{M}$ with its image under the embedding $\varepsilon$ so that we have $\mathcal{M} \subseteq \mathbb{E}^m$ and  $\mathsf{g}_x(v,w) = \langle v, w \rangle$.

As usual, we will use the Levi-Civita connection throughout. We will say a few words about this choice in Section~\ref{sec:conn}.

\subsection{Extrinsic curvatures}\label{sec:ezoo}

In this section, $\mathcal{M} \subseteq \mathbb{E}^m$ is an $n$-dimensional submanifold of an $m$-dimensional Euclidean space. For any $x\in \mathcal{M}$,  we have the canonical identification
\[
\mathbb{E}^m \cong \mathbb{T}_x \mathbb{E}^m = \mathbb{T}_x \mathcal{M}\oplus \mathbb{N}_x \mathcal{M}
\]
and the corresponding orthogonal projections $\proj_{\mathbb{T}_x \mathcal{M}}$ and  $\proj_{\mathbb{N}_x \mathcal{M}}$.

The \emph{first fundamental form} is
\[
\fff_x: \mathbb{T}_x \mathcal{M} \times  \mathbb{T}_x \mathcal{M} \to \mathbb{R},\quad  \fff_x(v,w) \coloneqq \langle v,  w \rangle.
\]
This is nothing more than the Riemannian metric $\mathsf{g}$ on $\mathcal{M}$ expressed in terms of the inner product  $\langle \, \cdot, \cdot \, \rangle$ on $\mathbb{E}^m$.

The \emph{Gauss map} is defined by 
\[
\mathsf{\Gamma}: \mathcal{M} \to \Gr(m-n,  \mathbb{E}^m),\quad \mathsf{\Gamma}(x) \coloneqq \mathbb{N}_x \mathcal{M}.
\] 
Take any $\mathbb{V} \in \Gr(m-n, \mathbb{E}^m)$, note that this is an $(m-n)$-dimensional subspace of $\mathbb{E}^m$ and we have
\[
\mathbb{T}_{\mathbb{V}} \Gr(m-n, \mathbb{E}^m) \cong \Hom(\mathbb{V},  \mathbb{V}^{\perp}).
\]
In particular,  $\mathbb{T}_{\mathbb{N}_x \mathcal{M}} \Gr(m-n, \mathbb{E}^m) \cong \Hom(\mathbb{N}_x \mathcal{M},  \mathbb{T}_x \mathcal{M})$ and we may write the derivative of the Gauss map as
\begin{equation}\label{eq:identification}
d_x \mathsf{\Gamma}: \mathbb{T}_x \mathcal{M} \to \Hom(\mathbb{T}_x \mathcal{M},\mathbb{N}_x \mathcal{M}).
\end{equation}

The \emph{second fundamental form} $\sff_x$ of $\mathcal{M}$ is given by the derivative of the Gauss map in the form of \eqref{eq:identification},  i.e.,
\[
\sff_x: \mathbb{T}_x \mathcal{M} \times  \mathbb{T}_x \mathcal{M} \to \mathbb{N}_x \mathcal{M},\quad 
\sff_x(v,w) \coloneqq d_x \mathsf{\Gamma}(v)(w).
\]
This is likely the most important extrinsic differential geometric invariant of an embedded manifold. Indeed many of our curvatures, including intrinsic ones, will be derived from the second fundamental form. We define the \emph{index of relative nullity} \cite{CK52} of $\mathcal{M}$ as  
\[
\upnu_x \coloneqq \dim \{v \in \mathbb{T}_x \mathcal{M}: \sff_x(v,  w) = 0\text{~for all~}w\in \mathbb{T}_x \mathcal{M}\},
\]
noting that $\sff_x(v,w) = \sff_x(w,v)$. For each $\eta \in \mathbb{N}_x \mathcal{M}$,  we may regard $\langle \sff_x,  \eta \rangle$ as an endomorphism on $\mathbb{T}_x \mathcal{M}$ and this is called the \emph{Weingarten map} or \emph{shape operator},
\[
\mathsf{S}_x(\eta) : \mathbb{T}_x \mathcal{M} \to \mathbb{T}_x \mathcal{M}, \quad \langle \mathsf{S}_x(\eta)(v), w \rangle \coloneqq \langle \sff_x(v,w), \eta \rangle.
\]
This operator is self-adjoint as the second fundamental form is symmetric. The eigenvalues (necessarily real)
\[
\lambda_1(\eta),\dots,\lambda_n(\eta) \in \mathbb{R},
\]
are called the \emph{principal curvatures} of $\mathcal{M}$ along $\eta$. Their product is called the \emph{Gaussian curvature} of $\mathcal{M}$ along $\eta$
\[
\mathsf{G}_x(\eta) \coloneqq \det\mathsf{S}_x(\eta)  \in \mathbb{R},
\]
and their average is called the \emph{mean curvature} of $\mathcal{M}$ along $\eta$
\[ 
\mathsf{H}_x(\eta) \coloneqq \frac{1}{n}\tr\mathsf{S}_x(\eta)  \in \mathbb{R}.
\]
For any orthonormal basis $\eta_1,\dots,  \eta_{m-n} \in \mathbb{N}_x \mathcal{M}$, the \emph{mean curvature vector} of $\mathcal{M}$ is 
\[
\mathsf{H}_x \coloneqq \sum_{i = 1}^{m-n} \mathsf{H}_x(\eta_i) \eta_i  \in \mathbb{N}_x \mathcal{M}
\]
and its value is independent of the choice of orthonormal basis. Clearly, $\mathsf{H}_x (\eta) = \langle \mathsf{H}_x , \eta \rangle$.

The \emph{Gauss--Obata} map \cite{Obata68} of $\mathcal{M}$ is
\[
\mathsf{Q}_x: \mathbb{T}_x \mathcal{M} \to \mathbb{T}_x \mathcal{M},  \quad \mathsf{Q}_x(v) \coloneqq \sum_{j=1}^{m-n} \mathsf{S}_x(\eta_j)^2 (v).
\]
and the \emph{third fundamental form} is 
\[
\tff_x:  \mathbb{T}_x \mathcal{M} \times \mathbb{T}_x \mathcal{M} \to \mathbb{R}, \quad \tff_x(v,w) \coloneqq \langle \mathsf{Q}_x(v) ,  w \rangle.
\]
The version defined here differs from an alternative version defined in classical differential  \cite[Equation~21]{Chern} and algebraic \cite[Equations~1.45 and 1.46]{GH} geometry. The latter allows for $k$th fundamental forms for all $k \ge 4$. Nevertheless the important thing is that both versions agree with the classical third fundamental form for a surface $\mathcal{M} \subseteq \mathbb{R}^3$.

Getting slightly ahead of ourselves, the second fundamental form may also be expressed using the Levi-Civita connection $\nabla$ in \eqref{eq:levi}:
\begin{equation}\label{eq:2ndff}
\sff_x(v,w)  = \partial_{v} \vec{w} - \grad{v} \vec{w} = \proj_{\mathbb{N}_x \mathcal{M}}  (\partial_v \vec{w} )
\end{equation}
for any $v, w \in \mathbb{T}_x \mathcal{M}$ and where $\vec{w}\in \mathcal{X} (\mathcal{M})$ is any vector field with $\vec{w}(x) = w$ \cite[Theorem~8.2]{Lee18}.  The value of $\sff_x(v,w)$ is independent of the choice of $\vec{w}$ \cite[Proposition~8.1]{Lee18}.

\subsection{Intrinsic curvatures}\label{sec:izoo}

In this section $\mathcal{M}$ is a Riemannian manifold with metric tensor $\mathsf{g}$. One feature of our approach is that we will calculate intrinsic curvatures in extrinsic coordinates given by our involution model, vastly simplifying the work involved. As such it suffices to define the \emph{Levi-Civita connection} $\nabla$ of $\mathcal{M}$ as an embedded manifold:
\begin{equation}\label{eq:levi}
\nabla: \mathcal{X} (\mathcal{M}) \times \mathcal{X} (\mathcal{M}) \to \mathcal{X} (\mathcal{M}),\quad (\nabla(\vec{v},  \vec{w}))(x) \coloneqq \proj_{\mathbb{T}_x \mathcal{M}} ( \partial_v \vec{w}),
\end{equation}
where  $v \coloneqq \vec{v}(x) \in \mathbb{T}_x \mathcal{M}$ and $\partial_v \vec{w}$ is the standard directional derivative of the vector field, i.e., derivative of the vector-valued function $\vec{w} : \mathcal{M} \to \mathbb{E}^m$ along the direction $v \in \mathbb{T}_x \mathcal{M} \subseteq \mathbb{E}^m$. Note that this simple definition is possible only because both $\mathcal{M}$ and $\mathbb{T}_x \mathcal{M}$ are regarded as subsets of $\mathbb{E}^m$.  It is also common to write, for a fixed $v \in \mathbb{T}_x \mathcal{M}$,
\[
\grad{v} \vec{w} : \mathcal{X} (\mathcal{M}) \to \mathcal{X} (\mathcal{M}), \quad ( \grad{v} \vec{w} ) (x) \coloneqq  (\nabla(\vec{v},  \vec{w}))(x),
\]
as it behaves like a directional derivative.  A slight variation of this notation makes $v \in \mathbb{T}_x \mathcal{M}$ the variable and fixes $x \in \mathcal{M}$, giving
\[
\lgrad{x} \vec{w}: \mathbb{T}_x \mathcal{M} \to \mathbb{T}_x \mathcal{M}, \quad (\lgrad{x} \vec{w})(v) :=  (\grad{v} \vec{w})(x).
\]
Since $\lgrad{x} \vec{w}$ is a linear operator, it has a trace, which defines the \emph{divergence} for a vector field $\vec{w}$,
\[
\dive: \mathcal{X}(\mathcal{M}) \to C^\infty(\mathcal{M}),\quad \dive(\vec{w}) (x) \coloneqq \tr( \lgrad{x} \vec{w}).
\]
For higher-order tensor fields, the convention is to apply divergence to the last argument: If $\vec{w}_1,\dots,  \vec{w}_k \in \mathcal{X} (\mathcal{M})$, then
\[
\dive (\vec{w}_1 \otimes \cdots \otimes   \vec{w}_{k-1} \otimes \vec{w}_k ) \coloneqq  \dive (\vec{w}_k ) \vec{w}_1 \otimes \cdots \otimes \vec{w}_{k-1},
\]
and extended linearly to all $k$-tensor fields \cite{Petersen16}.

We will need two common notions \cite{besse2007} defined for any symmetric bilinear forms on $\mathbb{T}_x \mathcal{M}$. Let $\alpha,  \beta: \mathbb{T}_x \mathcal{M} \times \mathbb{T}_x \mathcal{M} \to \mathbb{R}$ be symmetric and bilinear,  their \emph{Kulkarni--Nomizu product} $\alpha \varowedge \beta$ is the symmetric quadrilinear form
\begin{equation}\label{eq:kn}
\begin{gathered}
\alpha \varowedge \beta : \mathbb{T}_x \mathcal{M} \times \mathbb{T}_x \mathcal{M}  \times \mathbb{T}_x \mathcal{M} \times \mathbb{T}_x \mathcal{M} \to \mathbb{R}, \\
\alpha \varowedge \beta(u,v,w,z) \coloneqq \alpha(u,w) \beta(v,z) -  \alpha(u,z) \beta(v,w) -  \alpha(v,w) \beta(u,z) + \alpha(v,z) \beta(u,w).
\end{gathered}
\end{equation}
If a symmetric bilinear form $\beta :\mathbb{T}_x \mathcal{M} \times \mathbb{T}_x \mathcal{M} \to \mathbb{R}$ satisfies
\[
(\grad{u} \beta)(v,w) = (\grad{v} \beta)(u,w) 
\]
for all $u, v, w \in \mathbb{T}_x \mathcal{M}$, then  it is called a \emph{Codazzi tensor}.

The \emph{Riemann curvature} or \emph{curvature tensor} of $\mathcal{M}$ is
\[
 \Rie_x :  \mathbb{T}_x\mathcal{M} \times  \mathbb{T}_x\mathcal{M} \times  \mathbb{T}_x\mathcal{M} \times \mathbb{T}_x\mathcal{M} \to \mathbb{R},\quad 
 \Rie_x (u,v,w,z) 
\coloneqq \langle \grad{u}  \grad{v} w -  \grad{v}  \grad{u} w -  \grad{[u, v]} w,  z \rangle.
\]
There is a common variant known by the same name:
\[
\mathsf{R}_x :  \mathbb{T}_x\mathcal{M} \times  \mathbb{T}_x\mathcal{M}  \to \End(\mathbb{T}_x \mathcal{M}), \quad \mathsf{R}_x(u,v)w \coloneqq \grad{u}  \grad{v} w -  \grad{v}  \grad{u} w -  \grad{[u, v]} w.
\]
Note that $\Rie_x(u,v,w,z) = \langle \mathsf{R}_x(u,v)w,  z \rangle$ with the slight difference being that $\mathsf{R}_x$ is a bilinear map and $\Rie_x$ is a quadrilinear form.  There is also a symmetric variant called the \emph{Jacobi tensor} of $\mathcal{M}$,
\[
\mathsf{J}_x:  \mathbb{T}_x\mathcal{M} \times  \mathbb{T}_x\mathcal{M} \times  \mathbb{T}_x\mathcal{M} \times \mathbb{T}_x\mathcal{M} \to \mathbb{R},\quad 
 \mathsf{J}_x (u,v,w,z) 
\coloneqq \frac{1}{2} \left( \Rie_x(u,v,w,z) + \Rie_x(w,v,u,z) \right).
\]
If $v, w \in \mathbb{T}_x \mathcal{M}$ are linearly independent,  then the \emph{sectional curvature} of $\mathcal{M}$ is 
\[
\upkappa_x : \mathbb{T}_x\mathcal{M} \times  \mathbb{T}_x\mathcal{M} \to  \mathbb{R},\quad \upkappa_x(v,w) \coloneqq \frac{\Rie_x(v,w, w,v)}{\lVert v \wedge w \rVert^2} = \frac{\Rie_x(v,w, w,v)}{\lVert v  \rVert^2 \lVert w \rVert^2 - \langle v,  w \rangle^2} .
\]
Let $v_1,\dots, v_n \in \mathbb{T}_x \mathcal{M}$ be an orthonormal basis.  The \emph{Ricci curvature} of $\mathcal{M}$ is 
\[
\Ric_x : \mathbb{T}_x\mathcal{M} \times \mathbb{T}_x\mathcal{M} \to \mathbb{R},\quad \Ric_x (v,w) \coloneqq \sum_{j=1}^n \Rie_x(v,  v_j,  v_j ,  w).
\]
The \emph{scalar curvature} of $\mathcal{M}$ is 
\[
\Sca_x \coloneqq \tr(\Ric_x) = \sum_{1 \le j < k \le n} \upkappa_x(v_j,  v_k) =
 \sum_{1 \le j < k \le n} \Rie_x(v_j,  v_k,  v_k ,  v_j).
\]
The last two curvatures gives us the \emph{traceless Ricci curvature},
\[
\mathsf{Z}_x : \mathbb{T}_x\mathcal{M} \times \mathbb{T}_x\mathcal{M} \to \mathbb{R},\quad \mathsf{Z}_x (v,w) \coloneqq  \Ric_x (v,w) - \frac{\Sca_x}{n} \mathsf{g}_x(v,w),
\]
important as it gives a $\mathsf{g}$-orthogonal decomposition of Ricci curvature. A manifold with $\mathsf{Z} = 0$ is called an Einstein manifold \cite{besse2007}.

The scalar curvature allows a generalization to any $d$-dimensional subspace $\mathbb{V} \subseteq \mathbb{T}_x \mathcal{M}$,
\[
\Sca_x(\mathbb{V}) \coloneqq \sum_{1 \le j < k \le d} \upkappa_x(v_j,  v_k),
\]
where $v_1,\dots,  v_d \in \mathbb{V}$ is any orthonormal basis. Clearly $\Sca_x(\mathbb{T}_x \mathcal{M}) = \Sca_x$. From these one may construct the upper and lower \emph{delta invariants} \cite{chen93,chen08}, given respectively by
\begin{align*}
\overline{\updelta}_x (d_1,\dots,d_r) &\coloneqq \Sca_x - \inf_{\substack{ \dim \mathbb{V}_j = d_j \\ \mathbb{V}_j \perp \mathbb{V}_k, \; j< k}} \biggl[ \sum_{j=1}^r \Sca_x(\mathbb{V}_j) \biggr],  \\
\underline{\updelta}_x (d_1,\dots,d_r) &\coloneqq \Sca_x - \sup_{\substack{ \dim \mathbb{V}_j = d_j \\ \mathbb{V}_j \perp \mathbb{V}_k, \; j< k}} \biggl[ \sum_{j=1}^r \Sca_x(\mathbb{V}_j) \biggr],
\end{align*}
where $d_1,\dots,  d_r \in \mathbb{Z}$ are such that  $2 \le d_1 \le \cdots \le d_r$ and  $d_1 + \dots + d_r \le \dim \mathcal{M}$.

We will next define a well-known quartet of closely related curvature tensors \cite{besse2007}. The \emph{Schouten tensor} of $\mathcal{M}$ is 
\[
\mathsf{P}_x : \mathbb{T}_x \mathcal{M} \times \mathbb{T}_x \mathcal{M} \to \mathbb{R},\quad \mathsf{P}_x(v,w) \coloneqq \frac{1}{n-2} \Bigl( \Ric_x(v,w) - \frac{\Sca_x}{2(n-1)} \mathsf{g}_x(v, w) \Bigr). 
\]  
The \emph{Cotton tensor} of $\mathcal{M}$ is
\[
\mathsf{C}_x : \mathbb{T}_x \mathcal{M} \times \mathbb{T}_x \mathcal{M} \times \mathbb{T}_x \mathcal{M} \to \mathbb{R}, \quad  \mathsf{C}_x(u,v,w) \coloneqq (\grad{u}  \mathsf{P})_x (v,w) - (\grad{v} \mathsf{P})_x (u,w).
\]
The \emph{Weyl tensor} of $\mathcal{M}$ is 
\[
\mathsf{W}_x  : \mathbb{T}_x \mathcal{M} \times \mathbb{T}_x \mathcal{M} \times \mathbb{T}_x \mathcal{M} \times \mathbb{T}_x \mathcal{M} \to \mathbb{R},\quad
\mathsf{W}_x  \coloneqq   \Rie_x- \frac{1}{n-2} \mathsf{Z}_x \varowedge \mathsf{g}_x - \frac{\Sca_x}{2n(n-1)} \mathsf{g}_x \varowedge \mathsf{g}_x.
\]
The \emph{Bach} tensor of $\mathcal{M}$ is
\begin{gather*}
\mathsf{B}_x : \mathbb{T}_x \mathcal{M} \times  \mathbb{T}_x \mathcal{M} \to \mathbb{R}, \\ 
\mathsf{B}_x(u,w) \coloneqq \frac{1}{n-3} \sum_{i,j=1}^n (\nabla^2_{v_i,  v_j} \mathsf{W})_x(u, v_i, v_j, w) + \frac{1}{n-2} \sum_{i,j=1}^n \Ric_x(v_i, v_j) \mathsf{W}_x(u, v_i, v_j,  w),
\end{gather*}

We will also describe some intrinsic curvatures that are more typically studied in non-Riemannian geometry, i.e., for a connection $\nabla$ other than the Levi-Civita connection (see Section~\ref{sec:conn}).

The \emph{torsion tensor} \cite[Volume~I, Section~III.5]{KN} of $\mathcal{M}$ is
\[
\mathsf{T}_x: \mathbb{T}_x \mathcal{M} \times \mathbb{T}_x \mathcal{M}  \to \mathbb{R}, \quad
\mathsf{T}_x(v,w) \coloneqq \grad{v} w - \grad{w} v - [v,w].
\]
The \emph{nonmetricity tensor} \cite{nonmet} of $\mathcal{M}$ is
\[
\mathsf{Q}_x: \mathbb{T}_x \mathcal{M} \times \mathbb{T}_x \mathcal{M} \times \mathbb{T}_x \mathcal{M} \to \mathbb{R}, \quad
\mathsf{Q}_x(u,v,w) \coloneqq -\grad{u} \mathsf{g}_x(v, w) + \mathsf{g}_x(\grad{u} v, w) + \mathsf{g}_x(v, \grad{u} w).
\]
For any vector subbundle $\mathbb{V}\mathcal{M}$ of $\mathbb{T} \mathcal{M}$ with projection $\pi : \mathbb{T} \mathcal{M} \to \mathbb{V}\mathcal{M}$, its \emph{cocurvature} \cite{Michor88, KMS93} is
\[
\mathsf{R}^*_\pi :\mathcal{X} (\mathcal{M}) \times \mathcal{X} (\mathcal{M}) \to \mathcal{X} (\mathcal{M}), \quad
\mathsf{R}^*_\pi(\vec{v}, \vec{w}) \coloneqq (\id - \pi)\bigl( [\pi(\vec{v}), \pi(\vec{w})] \bigr).
\] 
In general $\mathsf{R}^*_\pi$ measures the failure of integrability of $\mathbb{V}\mathcal{M}$.  To put the cocurvature in perspective, the \emph{curvature} in this context is
\[
\mathsf{R}_\pi: \mathcal{X} (\mathcal{M}) \times \mathcal{X} (\mathcal{M}) \to \mathcal{X} (\mathcal{M}), \quad
\mathsf{R}_\pi (\vec{v}, \vec{w}) \coloneqq \pi \bigl([(\id- \pi)(\vec{v}), (\id - \pi)(\vec{w})]\bigr).
\]
Evidently, this is a more general notion than the Riemann curvature $\mathsf{R}$ but we will see how they are related in Proposition~\ref{prop:zero}.

\subsection{Non-Riemannian curvatures}\label{sec:conn}

Our choice of Levi-Civita connection is all but preordained by the Fundamental Theorem of Riemannian Geometry \cite[Chapter~2, Theorem~3.6]{do1992}, namely, it is the unique affine connection that is torsion-free and metric-compatible. The following proposition is stated for completeness, essentially a reminder that  non-Riemannian curvatures like torsion, nonmetricity, and cocurvature are trivial for a Riemannian manifold.
\begin{proposition}\label{prop:zero}
Let $\nabla$ be the Levi-Civita connection on $\mathcal{M}$.
\begin{enumerate}[\upshape (a)]
\item The torsion tensor and the nonmetricity tensor vanish identically, i.e.,
\[
\mathsf{T}_x(v,w) = 0, \qquad \mathsf{Q}_x(u,v,w) = 0
\]
for all $x \in \mathcal{M}$ and all $u,v,w \in \mathbb{T}_x \mathcal{M}$.

\item Let $\pi: \O(\mathcal{M})\to \mathcal{M}$ be the orthonormal frame bundle on $\mathcal{M}$, $\mathbb{V} \mathcal{M} \coloneqq \ker (d \pi)$, and $\widehat{\pi}: \mathbb{T}\O(\mathcal{M}) \to \mathbb{V} \mathcal{M}$ the projection induced by $\nabla$. Then the cocurvature vanishes and the curvature equals the Riemann curvature up to sign, i.e.,
\[
\mathsf{R}^*_{\widehat{\pi}}(\vec{v}, \vec{w})  = 0, \qquad \mathsf{R}_{\widehat{\pi}}(\vec{v}, \vec{w}) = - \mathsf{R}(\vec{v}, \vec{w})
\]
where the first equality holds for all $\vec{v}, \vec{w} \in \mathcal{X} (\O(\mathcal{M}))$, the second for all $\vec{v}, \vec{w} \in \mathcal{X} ( \mathcal{M} ) \subseteq \mathcal{X}(\O(\mathcal{M}))$.
\end{enumerate}
\end{proposition}
\begin{proof}
Since the Levi-Civita connection on $\mathcal{M}$ is, by definition, the unique connection that is torsion free and compatible with the metric $\mathsf{g}$, both $\mathsf{T}$ and $\mathsf{Q}$ vanish.

The horizontal bundle $\ker \widehat{\pi}$ is $\mathbb{T} \mathcal{M}$,  whose integral manifold is $\mathcal{M}$. So the cocurvature $\mathsf{R}^*_{\widehat{\pi}}$ must vanish identically.  Since $\O(\mathcal{M})$ consists of orthonormal frames on  $\mathcal{M}$,  the typical fiber of $\mathbb{V} (\mathcal{M}) = \ker (d \pi)$ is $\mathfrak{so}(n)$,  where $n = \dim \mathcal{M}$. As $\widehat{\pi}$ is induced by $\nabla$, it can be regarded as an $\mathfrak{so}(n)$-valued differential $1$-form $\omega$ on $\O(\mathcal{M})$ \cite[Chapter~1]{Bleecker}. The projection map $\widehat{\pi}$ gives a decomposition
\[
\mathbb{T} \O(\mathcal{M}) \simeq \mathbb{V} \mathcal{M} \oplus \mathbb{T} \mathcal{M}.
\] 
Thus for any $\vec{v}, \vec{w} \in \mathcal{X} (\O(\mathcal{M}))$,  we may identify $(\id - \widehat{\pi})(\vec{v})$ and $(\id - \widehat{\pi})(\vec{w})$ with elements in $\mathcal{X}(\mathcal{M})$ and  
\[
-\mathsf{R}_{\widehat{\pi}}(\vec{v},  \vec{w}) = -\omega ([(\id - \widehat{\pi}) (\vec{v}),  (\id - \widehat{\pi}) (\vec{w})]) = d \omega \bigl( (\id - \widehat{\pi}) (\vec{v}),  (\id - \widehat{\pi}) (\vec{w}) \bigr).
\]
In other words, $-\mathsf{R}_{\widehat{\pi}}$ turns out to be the curvature $2$-form of $\omega$, which by \cite[Volume~I, Section~III.5]{KN} is equal to the Riemann curvature $\mathsf{R}$.
\end{proof}

\section{Extrinsic curvatures of the Grassmannian}\label{sec:extrinsic}

We are now in a position to calculate various curvatures of the Grassmannian modeled as $\Gr(k,n)$ and express them as simple matrix formulas. Our strategy is first to calculate the extrinsic curvatures in this section, notably the second fundamental form, and then use it as the basis for our calculation of intrinsic curvatures in Section~\ref{sec:intrinsic}.

Our ambient Euclidean space of choice is  $\mathbb{S}^n$ equipped with the standard (also called trace or Frobenius) inner product on $\mathbb{R}^{n\times n}$ given by
\[
\langle X, Y \rangle \coloneqq \tr(X^\tp Y) = \sum_{i=1}^n \sum_{j=1}^n x_{ij} y_{ij}.
\]
When restricted to $\Gr(k,n)$, it gives us our Riemannian metric
\begin{equation}\label{eq:metric}
\mathsf{g}_Q : \mathbb{T}_{Q} \Gr(k,n)  \times \mathbb{T}_{Q} \Gr(k,n)  \to \mathbb{R}, \quad \mathsf{g}_Q(X,Y) = \tr(X^\tp Y)
\end{equation}
for all $Q \in \Gr(k,n)$. Of course, we have $X^\tp = X$ as $X \in \mathbb{S}^n$ but we choose to keep the transpose in our notation to remind ourselves that this is the trace inner product.

Given $Q\in \Gr(k,n) \subseteq \mathbb{S}^n$, we have an eigenvalue decomposition $Q = V I_{k,n-k} V^\tp$ for some $V\in \O(n)$ and $I_{k,n-k} \coloneqq \diag(I_k,-I_{n-k}) = \diag(1,\dots,1,-1,\dots,-1)$. The tangent and normal spaces of $\Gr(k,n)$ at $Q$ are
\begin{align}
\mathbb{T}_{Q} \Gr(k,n) &= \biggl\lbrace
V \begin{bmatrix}
0 & X_0 \\
X_0^\tp & 0 
\end{bmatrix} V^\tp \in \mathbb{S}^n : X_0 \in \mathbb{R}^{k\times (n-k)}
\biggr\rbrace,  \label{eq:tangent} \\
\mathbb{N}_Q \Gr(k,n) &= \biggl\lbrace
V \begin{bmatrix}
H_1 & 0 \\
0 & H_2 
\end{bmatrix} V^\tp \in \mathbb{S}^n  : H_1 \in \mathbb{S}^k, \; H_2 \in \mathbb{S}^{n-k}
\biggr\rbrace.  \label{eq:normal} 
\end{align}
Henceforth we will consistently write any point $Q\in \Gr(k,n)$, tangent vector $X \in \mathbb{T}_Q \Gr(k,n)$ and normal vector $H \in \mathbb{N}_Q \Gr(k,n)$ as
\begin{equation}\label{eq:para}
Q = V \begin{bmatrix}
I_k & 0\\
0 & -I_{n-k}
\end{bmatrix} V^\tp ,\quad
X = V \begin{bmatrix}
0 & X_0 \\
X_0^\tp & 0
\end{bmatrix} V^\tp ,\quad
H = V \begin{bmatrix}
H_1 & 0 \\
0 & H_2
\end{bmatrix} V^\tp.
\end{equation} 
These three decompositions in \eqref{eq:para} are easily computable: One may first compute $V$ with a QR decomposition of $(I + Q)/2$ \cite[Lemma~7.1]{ZLK20}.  Thereafter, $X_0$ and $(H_1,H_2)$ can be readily determined by computing $V^\tp X V$ and $V^\tp H V$ respectively. The ease of computing $V$, $X_0$, and $(H_1, H_2)$ translates directly into the ease of computing the curvatures in Table~\ref{tab:c}.

The simple parametrization of these three basic objects in the involution model is a key to the simplicity of our calculations.
A convenient orthonormal basis of $\mathbb{T}_{I_{k,n-k}} \Gr(k,n)$ is given by
\begin{equation}\label{eq:onb}
\biggl\lbrace \dfrac{\sqrt{2}}{2}\begin{bmatrix} 
0 & E_{ij} \\
E_{ij}^\tp & 0 
\end{bmatrix} \in \mathbb{S}^n  : i =1,\dots, k,\;  j = 1,\dots, n-k \biggr\rbrace
\end{equation}
where $E_{ij}$ is the $k\times (n-k)$ matrix with one in the $(i,j)$th entry and zero everywhere else. We refer readers to \cite[Section~3]{ZLK20} for the proofs of statements in this and the last paragraph. There is an important distinction between our approach in \cite{ZLK20} and that in this paper, namely, the ambient manifold is $\O(n)$ in \cite[Section~3]{ZLK20} whereas here it is $\mathbb{S}^n$. For instance, this accounts for the difference between our expression for normal space in \eqref{eq:normal} and that in  \cite[Proposition~3.6]{ZLK20}.

The next two results require no calculation and are just stated for completeness. 
\begin{proposition}[First fundamental form]\label{prop:fff}
The first fundamental form $\fff_Q: \mathbb{T}_Q \Gr(k,n) \times \mathbb{T}_Q\Gr(k,n) \to  \mathbb{R}$ is given by
\[
\fff_Q(X, Y) = 2  \tr(X_0^\tp Y_0)
\]
with $Q$, $X$, $Y$ parameterized as in \eqref{eq:para}.
\end{proposition}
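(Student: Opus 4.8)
The plan is to unwind the definition of the first fundamental form and then substitute the parameterization \eqref{eq:para}. By definition, $\fff_Q(X,Y) = \langle X, Y\rangle$ is just the ambient inner product restricted to the tangent space $\mathbb{T}_Q \Gr(k,n)$; with our choice of ambient Euclidean space $\mathbb{S}^n$ and its trace inner product \eqref{eq:metric}, this reads $\fff_Q(X,Y) = \tr(X^\tp Y)$. Since tangent vectors are symmetric, $X^\tp = X$, so the quantity to compute is simply $\tr(XY)$.

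The next step is to insert the block forms of $X$ and $Y$ from \eqref{eq:para}. Because $V \in \O(n)$ satisfies $V^\tp V = I$, the outer conjugating factors telescope under the trace, leaving the trace of the product of two off-diagonal block matrices. Multiplying these out, the product is block-diagonal with diagonal blocks $X_0 Y_0^\tp$ and $X_0^\tp Y_0$, so that $\fff_Q(X,Y) = \tr(X_0 Y_0^\tp) + \tr(X_0^\tp Y_0)$.

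The final observation is that $\tr(X_0 Y_0^\tp) = \tr(X_0^\tp Y_0)$, which follows from cyclicity of the trace together with its invariance under transposition. This accounts for the factor of $2$ and yields $\fff_Q(X,Y) = 2\tr(X_0^\tp Y_0)$, as claimed. There is no real obstacle here: as the statement itself remarks, the result requires essentially no calculation, and the entire argument collapses to a single line once the parameterization \eqref{eq:para} and the orthogonality of $V$ are invoked. The mild point worth flagging is merely that the off-diagonal block structure of tangent vectors is precisely what produces the factor of $2$, a feature that will recur throughout the subsequent computations.
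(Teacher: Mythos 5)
Your proof is correct and is exactly the verification the paper has in mind: the paper states Proposition~\ref{prop:fff} without proof (``requires no calculation''), since the first fundamental form is just the ambient trace inner product \eqref{eq:metric} restricted to the tangent space, and the factor of $2$ falls out of the off-diagonal block structure in \eqref{eq:para} precisely as you computed. Nothing is missing.
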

\begin{proposition}[Gauss map]\label{prop:Gmap}
The Gauss map of $\Gr(k,n)$ in $\mathbb{S}^n$ is given by 
\begin{align*}
\mathsf{\Gamma} : \Gr(k,n) &\to \Gr\bigl( \tbinom{n+1}{2} - k(n-k),  \mathbb{S}^n \bigr), \\
\mathsf{\Gamma} (Q) &= \mathbb{N}_Q (\Gr(k,n)) =V  \left\lbrace
\begin{bmatrix}
H_1 & 0 \\
0 & H_2 
\end{bmatrix} : H_1 \in \mathbb{S}^k, H_2 \in \mathbb{S}^{n-k}
\right\rbrace V^\tp,
\end{align*}
with $Q$ parameterized as in \eqref{eq:para}.
\end{proposition}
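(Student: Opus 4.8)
The plan is to recognize that this statement is essentially a restatement of the definition of the Gauss map together with the explicit description of the normal space already recorded in \eqref{eq:normal}; accordingly it requires no genuine computation. First I would recall that, by definition, $\mathsf{\Gamma}(Q) = \mathbb{N}_Q \Gr(k,n)$, so the only two things to check are (i) that the normal space has the stated block-diagonal matrix form and (ii) that the target Grassmannian is indexed correctly.

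For (i), I would simply quote \eqref{eq:normal}: writing $Q = V I_{k,n-k} V^\tp$, the normal space is exactly the set of matrices $V \diag(H_1, H_2) V^\tp$ with $H_1 \in \mathbb{S}^k$ and $H_2 \in \mathbb{S}^{n-k}$, which is precisely the form appearing in the statement. If one wants this self-contained rather than cited to \cite{ZLK20}, it follows by orthogonally decomposing $\mathbb{S}^n$ relative to the splitting $\mathbb{R}^n = \mathbb{R}^k \oplus \mathbb{R}^{n-k}$ determined by the columns of $V$ into block-off-diagonal symmetric matrices (the tangent space \eqref{eq:tangent}) and block-diagonal symmetric matrices; orthogonality in the trace inner product is immediate, since the trace of a product of a block-off-diagonal and a block-diagonal symmetric matrix vanishes.

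For (ii), the target of the Gauss map is $\Gr(m - \dim \Gr(k,n), \mathbb{S}^n)$, where $m = \dim \mathbb{S}^n = \binom{n+1}{2}$ and $\dim \Gr(k,n) = k(n-k)$. I would confirm the codimension by the dimension count $\dim \mathbb{N}_Q = \dim \mathbb{S}^k + \dim \mathbb{S}^{n-k} = \binom{k+1}{2} + \binom{n-k+1}{2}$ together with the elementary identity $\binom{k+1}{2} + \binom{n-k+1}{2} + k(n-k) = \binom{n+1}{2}$, which gives $\dim \mathbb{N}_Q = \binom{n+1}{2} - k(n-k)$ and hence the stated target.

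There is no real obstacle: the proposition merely specializes the definition of the Gauss map to the involution model, and the one-line binomial identity above is the only arithmetic involved. All the substantive work was already carried out in establishing the tangent and normal space descriptions \eqref{eq:tangent} and \eqref{eq:normal}.
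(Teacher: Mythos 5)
Your proposal is correct and matches the paper's treatment: the paper explicitly states that this proposition (like Proposition~\ref{prop:fff}) ``requires no calculation'' and is recorded only for completeness, since it is just the definition of the Gauss map combined with the normal space description \eqref{eq:normal} cited from \cite{ZLK20}. Your added dimension count $\binom{k+1}{2} + \binom{n-k+1}{2} = \binom{n+1}{2} - k(n-k)$ and the trace-orthogonality of block-diagonal against block-off-diagonal symmetric matrices are exactly the routine verifications the paper leaves implicit.
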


The next calculation is our key to unlocking other calculations in this article.
\begin{theorem}[Second fundamental form]\label{thm:2ndFF}
The second fundamental form $\sff_Q: \mathbb{T}_Q \Gr(k,n) \times \mathbb{T}_Q\Gr(k,n) \to \mathbb{N}_Q \Gr(k,n)$ is given by
\[
\sff_Q(X,Y) =\frac{1}{2} V \begin{bmatrix}
-X_0 Y_0^\tp - Y_0 X_0^\tp & 0 \\
0 & X_0^\tp Y_0 + Y_0^\tp X_0
\end{bmatrix} V^\tp,
\]
with $Q$, $X$, $Y$ parameterized as in \eqref{eq:para}.
\end{theorem}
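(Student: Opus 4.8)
The plan is to compute $\sff_Q$ directly from the Gauss formula \eqref{eq:2ndff}, namely $\sff_Q(X,Y) = \proj_{\mathbb{N}_Q}(\partial_X \vec Y)$, using nothing more than the defining involution $Q^2 = I$. Two structural observations make this a one-liner. First, differentiating $Q^2 = I$ along the manifold shows that every tangent vector anticommutes with $Q$, while by \eqref{eq:tangent}--\eqref{eq:normal} every normal vector commutes with $Q$; in the $V$-basis these are exactly the off-diagonal and diagonal blocks, and they are precisely the $-1$ and $+1$ eigenspaces of the involution $M \mapsto QMQ$ on $\mathbb{S}^n$. Consequently the orthogonal projection onto the normal space is the projection onto the $+1$ eigenspace, $\proj_{\mathbb{N}_Q}(M) = \tfrac12(M + QMQ)$.

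The core computation is then immediate. Let $\vec Y$ be any smooth tangent vector field with $\vec Y(Q) = Y$; tangency means $\vec Y Q + Q \vec Y = 0$ identically on $\Gr(k,n)$. Differentiating this relation in the direction $X \in \mathbb{T}_Q\Gr(k,n)$ and using $\partial_X Q = X$ gives
\[
(\partial_X \vec Y)\, Q + Q\, (\partial_X \vec Y) = -(XY + YX).
\]
Right-multiplying by $Q$ and invoking $Q^2 = I$ turns the left-hand side into $\partial_X \vec Y + Q(\partial_X \vec Y)Q$, so that
\[
\sff_Q(X,Y) = \proj_{\mathbb{N}_Q}(\partial_X \vec Y) = \tfrac12\bigl(\partial_X \vec Y + Q(\partial_X \vec Y)Q\bigr) = -\tfrac12 (XY + YX)\,Q .
\]
Before expanding, I would verify that the answer genuinely lands in $\mathbb{N}_Q$: since $X$ and $Y$ each anticommute with $Q$, the product $XY + YX$ commutes with $Q$, hence so does $(XY+YX)Q$, confirming it is symmetric and normal. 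By \cite[Proposition~8.1]{Lee18} the value is independent of the chosen extension $\vec Y$, so the choice of field needs no further justification.

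Finally I would substitute the block parameterizations from \eqref{eq:para}. A direct multiplication gives $XY + YX = V\,\diag(X_0 Y_0^\tp + Y_0 X_0^\tp,\; X_0^\tp Y_0 + Y_0^\tp X_0)\,V^\tp$, and right-multiplying by $Q = V I_{k,n-k} V^\tp$ negates the lower block, yielding the block-diagonal matrix of the statement up to the overall sign. This overall sign is the one point demanding care, and the main obstacle: the excerpt gives two definitions of the second fundamental form, via the Gauss map $d_Q\mathsf{\Gamma}$ and via the Gauss formula \eqref{eq:2ndff}, and these differ by a sign (compare the Weingarten relation, under which the differential of the Gauss map is $-\mathsf{S}$). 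Everything else is a two-line manipulation of $Q^2=I$, so fixing which convention is in force is essentially the only real work. I would settle it by matching against a direct low-dimensional check such as $\Gr(1,2)$, where a geodesic circle $Q(\theta)$ can be differentiated twice by hand, and then read off the blocks to obtain the stated formula.
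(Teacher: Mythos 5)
Your computation is correct, and it takes a genuinely different route from the paper's. The paper reduces to the base point $I_{k,n-k}$ by homogeneity, extends $X$ to a local field $\widetilde{X}$ via the exponential parametrization $\varphi(B,H_1,H_2)$ of a neighborhood of $I_{k,n-k}$ in $\mathbb{S}^n$, computes each coordinate derivative $\partial\widetilde{X}/\partial b_{ij}$, and sums these against the entries of $Y_0$. You instead work at an arbitrary $Q$, characterize tangent and normal vectors by anticommutation and commutation with $Q$, note that $\proj_{\mathbb{N}_Q}(M)=\tfrac12(M+QMQ)$, and differentiate the constraint $\vec{Y}Q+Q\vec{Y}=0$ once. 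Your route is basis-free and parametrization-free, yields the closed form $\sff_Q(X,Y)=-\tfrac12(XY+YX)Q$ rather than only its block expression, and the structural identities it isolates (tangent $=$ anticommuting, normal $=$ commuting) are reusable in the later corollaries; the paper's route avoids needing the projection formula but is longer and tied to the base point.

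The sign you worried about deserves a clear verdict. In block form your answer is
\[
\frac{1}{2} V \begin{bmatrix} -(X_0 Y_0^\tp + Y_0 X_0^\tp) & 0 \\ 0 & X_0^\tp Y_0 + Y_0^\tp X_0 \end{bmatrix} V^\tp ,
\]
which coincides exactly with the final display of the paper's own proof of Theorem~\ref{thm:2ndFF} --- and is the negative of the formula in the theorem's statement. So the tension you detected is internal to the paper, not a defect of your argument: the two definitions of $\sff$ in Section~\ref{sec:ezoo} do differ by a sign, just as you say. The printed theorem agrees with the Gauss-map definition $\sff_Q(X,Y)=d_Q\mathsf{\Gamma}(X)(Y)$, whereas \eqref{eq:2ndff}, which both you and the paper's proof actually invoke, yields the opposite sign. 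The one caveat concerns your proposed tie-breaker: the $\Gr(1,2)$ check cannot produce the stated sign out of \eqref{eq:2ndff}. There $Q(\theta)$ is a geodesic with $\ddot{Q}=-Q$, so \eqref{eq:2ndff} gives $\sff_Q(X,X)=-X^2Q=-Q$, while the stated formula gives $+Q$; the check therefore only identifies which convention the statement follows (the Gauss-map one). To prove the statement exactly as printed, run your same two-line argument on a normal field instead: differentiating the commutation relation $\vec{H}Q=Q\vec{H}$ gives $\proj_{\mathbb{T}_Q}(\partial_X\vec{H})=\tfrac12[X,H]Q$, and taking the adjoint of $H\mapsto\tfrac12[X,H]Q$ produces $+\tfrac12(XY+YX)Q$. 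Note finally that the downstream use of $\sff$ in Proposition~\ref{prop:Rcurv} involves products of two copies of $\sff$, so the sign ambiguity is harmless there.
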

\begin{proof}
Since $\O(n)$ acts on $\Gr(k,n)$ transitively and isometrically, it suffices to calculate $\sff$ at $I_{k,n-k}\in \Gr(k,n)$. In this case $X, Y \in \mathbb{T}_{I_{k,n-k}} \Gr(k,n)$ may be written as
\[
X = \begin{bmatrix}
0 & X_0 \\
X_0^\tp & 0
\end{bmatrix},\quad Y = \begin{bmatrix}
0 & Y_0 \\
Y_0^\tp & 0
\end{bmatrix}
\]
for some $X_0 ,Y_0 \in \mathbb{R}^{k \times (n-k)}$. Points near $I_{k,n-k}$ in $\mathbb{S}^k$ can be parametrized as 
\[
\varphi(B, H_1, H_2) = \exp \biggl( \frac{1}{2} \begin{bmatrix} 
0 & -B \\
B^\tp & 0
\end{bmatrix} \biggr) \biggl(I_{k,n-k} + \begin{bmatrix}
H_1 & 0 \\
0 & H_2 
\end{bmatrix} \biggr)
\exp \biggl(   \frac{1}{2} \begin{bmatrix}
0 & -B \\
B^\tp & 0
\end{bmatrix}^\tp \biggr),
\]
where $B\in \mathbb{R}^{k\times (n-k)}$, $H_1 \in \mathbb{S}^k$, and $H_2 \in \mathbb{S}^{n-k}$ have sufficiently small norms. This follows from the implicit function theorem,  as the Jacobian of $\varphi$ at $(0, 0, 0)$ gives an isometry between $\mathbb{R}^{k\times (n-k)} \oplus \mathbb{S}^k \oplus \mathbb{S}^{n-k}$ and $\mathbb{T}_{I_{k, n-k}} \Gr(k,n) \oplus \mathbb{N}_{I_{k, n-k}} \Gr(k,n)$.

Clearly, we have $\varphi(B, H_1, H_2) \in \Gr(k,n)$ if and only if $H_1 = 0$ and $H_2 = 0$. Thus we may extend $X$ by 
\[
\widetilde{X}\bigl(\varphi(B, H_1, H_2) \bigr) = 
\exp \biggl(  \frac{1}{2} \begin{bmatrix}
0 & -B \\
B^\tp & 0
\end{bmatrix} \biggr) 
\begin{bmatrix}
H_1 & X_0 \\
X_0^\tp & H_2 
\end{bmatrix} 
\exp \biggl(  \frac{1}{2} \begin{bmatrix}
0 & -B \\
B^\tp & 0
\end{bmatrix} \biggr)^\tp.
\]
Such an $\widetilde{X}$ is an extension of a local vector field around $I_{k,n-k}$ on $\Gr(k,n)$. By \eqref{eq:2ndff}, 
\[
\sff_{I_{k,n-k}}(X,Y) =\proj_{\mathbb{N}_{I_{k,n-k}} \Gr(k,n)} \bigl( \langle \widetilde{\nabla} \widetilde{X}(I_{k,n-k}), Y \rangle \bigr)
\]
where $\widetilde{\nabla}$ denotes the covariant derivative in the Euclidean space $\mathbb{S}^n$, i.e.,
\[
\widetilde{\nabla} \widetilde{X} = \bigl( 
\partial_{B} \widetilde{X}, 
\partial_{H_1} \widetilde{X}, 
\partial_{H_2} \widetilde{X}
\bigr)
\]
Since $Y$ is a tangent vector, we obtain 
\[
\bigl\langle \widetilde{\nabla} \widetilde{X}(I_{k,n-k}), Y \bigr\rangle =\sum_{i=1}^k \sum_{j=1}^{n-k} \frac{\partial \widetilde{X}}{\partial b_{ij}}(I_{k,n-k}) y_{0ij},
\]
where we have written $B = (b_{ij})$ and $Y_0 = (y_{0ij})$. 
Observe that 
\begin{align*}
\frac{\partial \widetilde{X}}{\partial b_{ij}}(I_{k,n-k}) &= 
-\frac{1}{2}\begin{bmatrix}
 0 & E_{ij} \\
 -E_{ij}^\tp & 0
 \end{bmatrix}\begin{bmatrix}
0 & X_0 \\
X_0^\tp & 0
\end{bmatrix}  +  \frac{1}{2}\begin{bmatrix}
0 & X_0 \\
X_0^\tp & 0
\end{bmatrix}  \begin{bmatrix}
 0 & E_{ij} \\
 -E_{ij}^\tp & 0
 \end{bmatrix} \\
&=   
\frac{1}{2} \begin{bmatrix}
-E_{ij} X_0^\tp - X_0 E_{ij}^\tp & 0 \\
0 & E_{ij}^\tp X_0 + X_0^\tp E_{ij} 
\end{bmatrix}.
\end{align*}
Therefore we have 
\begin{align*}
\sum_{i=1}^k \sum_{j=1}^{n-k} \frac{\partial \widetilde{X}}{\partial b_{ij}}(I_{k,n-k}) y_{0ij} 
&=\frac{1}{2} \sum_{i=1}^k \sum_{j=1}^{n-k}  \begin{bmatrix}
 -(E_{ij} X_0^\tp + X_0 E_{ij}^\tp) y_{0ij} & 0 \\
0 &  (E_{ij}^\tp X_0 + X_0^\tp E_{ij}) y_{0ij} 
\end{bmatrix} \\
&= \frac{1}{2} \begin{bmatrix}
-X_0 Y_0^\tp - Y_0 X_0^\tp & 0 \\
0 & X_0^\tp Y_0 + Y_0^\tp X_0
\end{bmatrix},
\end{align*}
where the last expression is our required $\sff_{I_{k,n-k}}(X,Y) $.
\end{proof}

We record an observation that follows from an additional step of singular value decomposition.
\begin{corollary}[Index of relative nullity]\label{cor:nullity}
The index of relative nullity $\upnu_Q $ of $\Gr(k,n)$ is zero. 
\end{corollary}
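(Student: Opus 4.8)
The plan is to unwind the definition of the index of relative nullity directly from the explicit formula for $\sff_Q$ in Theorem~\ref{thm:2ndFF}. Recall that $\upnu_Q$ is the dimension of the null space
\[
\mathbb{K} \coloneqq \{X \in \mathbb{T}_Q \Gr(k,n) : \sff_Q(X,Y) = 0 \text{ for all } Y \in \mathbb{T}_Q \Gr(k,n)\},
\]
so it suffices to prove $\mathbb{K} = \{0\}$. Writing $X,Y$ as in \eqref{eq:para}, conjugation by $V \in \O(n)$ is an isometry, so the vanishing of $\sff_Q(X,Y)$ is equivalent to the vanishing of the inner block matrix; this reduces every condition below to an identity in the single parameter $X_0 \in \mathbb{R}^{k \times (n-k)}$, with no further appeal to the point $Q$.

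The key step is to test the defining condition against the single, cleverly chosen tangent vector $Y = X$. Substituting $Y_0 = X_0$ into Theorem~\ref{thm:2ndFF}, the factor $\tfrac12$ cancels the doubling and gives
\[
\sff_Q(X, X) = V \begin{bmatrix} X_0 X_0^\tp & 0 \\ 0 & -X_0^\tp X_0 \end{bmatrix} V^\tp .
\]
Hence $X \in \mathbb{K}$ forces $X_0 X_0^\tp = 0$ in particular, and taking the trace yields $\tr(X_0 X_0^\tp) = \sum_{i,j}(X_0)_{ij}^2 = 0$, so $X_0 = 0$ and $X = 0$. This already establishes $\mathbb{K} = \{0\}$ and therefore $\upnu_Q = 0$.

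To connect with the singular value decomposition named in the statement, and to foreshadow Corollary~\ref{cor:prin}, I would alternatively record a full SVD $X_0 = U \Sigma W^\tp$ with $U \in \O(k)$, $W \in \O(n-k)$. Then $X_0 X_0^\tp = U \Sigma \Sigma^\tp U^\tp$ has eigenvalues equal to the squared singular values of $X_0$, so it vanishes exactly when all singular values vanish; in this form it is transparent that any nonzero tangent direction produces a nonzero second fundamental form, which is the geometric content of $\upnu_Q = 0$. Honestly there is no genuine obstacle here — the only point worth flagging is the legitimacy of testing with one $Y = X$, which is valid precisely because the nullity condition quantifies over all $Y$, so any convenient choice that already forces $X = 0$ suffices. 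Equivalently, one could argue through the Weingarten map of Corollary~\ref{cor:wein}: $X \in \mathbb{K}$ iff $\mathsf{S}(H)(X) = 0$ for every normal $H$, and taking $H_1 = I_k$, $H_2 = 0$ immediately gives $X_0 = 0$.
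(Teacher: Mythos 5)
Your proof is correct, and it takes a genuinely shorter route than the paper's. Both arguments start from the same place --- the explicit formula for $\sff_Q$ in Theorem~\ref{thm:2ndFF} and the observation that conjugation by $V$ may be stripped off --- but they diverge at the key step. The paper imposes $\sff_Q(X,Y)=0$ for \emph{all} $Y$, takes a singular value decomposition $X_0 = U\Sigma V^\tp$, rewrites the conditions in terms of $\Sigma$ and $U^\tp Y_0 V$, and then tests against diagonal choices of $Y_0$ to force $\Sigma = 0$. You instead test against the single vector $Y = X$, which collapses the two blocks to $X_0X_0^\tp$ and $-X_0^\tp X_0$; since $X_0X_0^\tp = 0$ implies $\tr(X_0X_0^\tp) = \lVert X_0\rVert_{\fn}^2 = 0$, you get $X_0 = 0$ in one line, with no decomposition at all. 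This is perfectly legitimate --- as you note, the nullity condition quantifies over all $Y$, so any single witness that forces $X=0$ suffices --- and it exploits the positive semidefiniteness of $\sff_Q(X,X)$'s diagonal blocks, which the paper's argument never uses. Your Weingarten-map variant (take $H_1 = I_k$, $H_2 = 0$, so that $\mathsf{S}_Q(H)(X) = \tfrac12 X$) is an equally clean second route, valid because the pairing $\langle \sff_Q(X,Y), H\rangle$ is nondegenerate in $(Y,H)$. What the paper's heavier SVD argument buys is mainly thematic continuity: the same decomposition reappears essentially unchanged in Corollary~\ref{cor:prin} to diagonalize the Weingarten map, whereas your argument is self-contained and strictly more elementary. (One small slip: the singular value decomposition is not ``named in the statement'' of the corollary; it is mentioned only in the paper's lead-in sentence, which you could not have seen --- this does not affect your proof.)
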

\begin{proof}
Let $X\in \mathbb{T}_Q \Gr(k,n)$ be such that $\sff_Q(X, Y) = 0$ for all $Y\in \mathbb{T}_Q \Gr(k,n)$, with $Q,  X,Y$ parametrized as in  \eqref{eq:para}. We claim that $X = 0$. By Theorem~\ref{thm:2ndFF}, we must have
\begin{equation}\label{cor:nullity:eq1}
X_0 Y_0^\tp + Y_0 X_0^\tp = 0,\quad X_0^\tp Y_0 + Y_0^\tp X_0 = 0
\end{equation}
for any $Y_0 \in \mathbb{R}^{k \times (n-k)}$. Let $X_0 = U \Sigma V^\tp$ be a singular value decomposition with $U\in \O(k)$ and $V\in \O(n-k)$. Then \eqref{cor:nullity:eq1} becomes
\[
\Sigma  (U^\tp Y_0 V)^\tp + (U^\tp Y_0 V) \Sigma = 0,\quad \Sigma (U^\tp Y_0 V) + (U^\tp Y_0 V)^\tp \Sigma = 0.
\]
Since $Y_0$ is arbitrary, we may set $X_0 = \Sigma$ in \eqref{cor:nullity:eq1}. Now by taking $Y_0$ to be an arbitrary diagonal $k\times (n-k)$ matrix, we see that $\Sigma = 0$. Hence $X_0 = 0$ and $X = 0$.
\end{proof}

The Weingarten map is an alternative way to express the second fundamental form and thus follows easily from Theorem~\ref{thm:2ndFF}.
\begin{corollary}[Weingarten map]\label{cor:wein}
The Weingarten map $\mathsf{S}_Q(H): \mathbb{T}_Q \Gr(k,n) \to \mathbb{T}_Q \Gr(k,n)$ along the normal direction $H \in\mathbb{N}_Q \Gr(k,n)$ is given by
\begin{equation}\label{eq:wein}
\mathsf{S}_Q(H)(X) = \frac12 V
\begin{bmatrix}
0 &  X_0 H_2 - H_1 X_0  \\
( X_0 H_2 - H_1 X_0 )^\tp & 0
\end{bmatrix} V^\tp
\end{equation}
with $Q$, $X$, $H$ parameterized as in \eqref{eq:para}.
\end{corollary}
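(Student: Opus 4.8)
The plan is to invoke the defining relation of the Weingarten map directly, deducing \eqref{eq:wein} from Theorem~\ref{thm:2ndFF} with no new geometry. By definition, $\mathsf{S}_Q(H)$ is the self-adjoint endomorphism of $\mathbb{T}_Q \Gr(k,n)$ characterized by
\[
\langle \mathsf{S}_Q(H)(X), Y \rangle = \langle \sff_Q(X,Y), H \rangle
\]
for every tangent vector $Y$. Since $Y_0 \in \mathbb{R}^{k \times (n-k)}$ ranges freely as $Y$ ranges over $\mathbb{T}_Q \Gr(k,n)$ by \eqref{eq:tangent}, it suffices to compute the right-hand side explicitly, recognize it as the pairing of $Y$ against a tangent vector of the claimed form, and appeal to uniqueness of the Riesz representative.

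First I would substitute the formula for $\sff_Q(X,Y)$ from Theorem~\ref{thm:2ndFF} and the parametrization of $H$ in \eqref{eq:para}. Because the common orthogonal factor $V$ cancels under the trace, using $\langle V A V^\tp, V B V^\tp \rangle = \tr(AB)$ for symmetric $A,B$, the inner product collapses to a pairing of the two middle block-diagonal matrices, giving
\[
\langle \sff_Q(X,Y), H \rangle = \tfrac12\bigl[ \tr\bigl((X_0 Y_0^\tp + Y_0 X_0^\tp) H_1\bigr) - \tr\bigl((X_0^\tp Y_0 + Y_0^\tp X_0) H_2\bigr)\bigr].
\]

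Next I would simplify each bracketed term using cyclic invariance of the trace together with the symmetry $H_1^\tp = H_1$ and $H_2^\tp = H_2$: the two summands inside each pair of parentheses coincide, so the whole expression reduces to $\tr\bigl((H_1 X_0 - X_0 H_2) Y_0^\tp\bigr)$. On the other hand, pairing the candidate operator \eqref{eq:wein} with $Y$ and again cancelling $V$ yields, after multiplying out the off-diagonal blocks and using that the two diagonal blocks of the product contribute equal traces, exactly $\tr\bigl((H_1 X_0 - X_0 H_2) Y_0^\tp\bigr)$. Matching the two and observing that \eqref{eq:wein} has the off-diagonal block structure required of a tangent vector in \eqref{eq:tangent} completes the identification.

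I do not anticipate a genuine obstacle: the statement is an immediate consequence of Theorem~\ref{thm:2ndFF} and the definition of the Weingarten map. The only points demanding care are the bookkeeping of the trace manipulations, namely using the symmetry of $H_1$ and $H_2$ to merge the cross terms, and the verification that the claimed expression lands in the tangent space rather than the normal space.
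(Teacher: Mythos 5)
Your proposal is correct and follows essentially the same route as the paper's proof: both start from the defining relation $\langle \mathsf{S}_Q(H)(X),Y\rangle = \langle \sff_Q(X,Y),H\rangle$, substitute the second fundamental form from Theorem~\ref{thm:2ndFF} and the parametrization \eqref{eq:para}, and use cyclicity of the trace together with the symmetry of $H_1,H_2$ to recognize the result as the pairing of $Y$ with the claimed tangent vector. The only cosmetic difference is that you merge the two trace terms into the single expression $\tr\bigl((H_1X_0 - X_0H_2)Y_0^\tp\bigr)$, whereas the paper keeps them as a sum before reassembling the block matrix; the content is identical.
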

\begin{proof}
We plug in the expressions from \eqref{eq:para} into $\langle \mathsf{S}_Q(H)(X),Y \rangle =  \langle \sff_Q(X,Y) , H \rangle $ and use standard properties of trace to get
\begin{align*}
\langle \mathsf{S}_Q(H)(X),Y \rangle 
&= \frac12 \bigl[ \tr\bigl( -(X_0 Y_0^\tp + Y_0 X_0^\tp ) H_1 \bigr) + \tr\bigl( (X_0^\tp  Y_0 + Y_0^\tp  X_0) H_2 \bigr) \bigr] \\
&= \frac12 \bigl[ \tr \bigl( ( X_0 H_2 - H_1X_0) \bigr) Y_0^\tp )+ \tr\bigl( ( H_2 X_0^\tp - X_0^\tp  H_1)  Y_0 \bigr) \bigr] \\
&= \tr\biggl( \frac12
\begin{bmatrix}
0 &  X_0 H_2 - H_1 X_0  \\
( X_0 H_2 - H_1 X_0 )^\tp & 0
\end{bmatrix}
\begin{bmatrix}
0 &  Y_0  \\
Y_0^\tp & 0
\end{bmatrix}
\biggr),
\end{align*}
and thereby deducing \eqref{eq:wein}.
\end{proof}

The calculation of mean curvature is also straightforward.
\begin{corollary}[Mean curvature]\label{cor:meancurv}
The mean curvature vector of $\Gr(k,n)$ is given by
\[
\mathsf{H}_Q = \frac{1}{k(n-k)} \tr(\sff_Q) = \frac{1}{2k(n-k)} V\begin{bmatrix}
-(n-k) I_k & 0 \\
0 & k I_{n-k}
\end{bmatrix} V^\tp
\]
and the mean curvature of $\Gr(k,n)$ along $H  \in \mathbb{N}_Q \Gr(k,n)$ is given by
\[
\mathsf{H}_Q (H) =\frac{ (k-n)\tr H_1 + k \tr H_2 }{2k(n-k)}
\]
with $Q$, $X$, $H$ parameterized as in \eqref{eq:para}.
\end{corollary}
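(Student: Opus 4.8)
The plan is to read both formulas off the second fundamental form of Theorem~\ref{thm:2ndFF}, using homogeneity to reduce to the base point. Since $\O(n)$ acts transitively and isometrically on $\Gr(k,n)$, it suffices to carry out the computation at $Q = I_{k,n-k}$ (so that $V = I$) and then conjugate the resulting block matrices by $V$; this is precisely the device already used in the proof of Theorem~\ref{thm:2ndFF}.

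The core step is to express the mean curvature vector through the second fundamental form. Expanding $\mathsf{H}_Q(\eta_i) = \tr\mathsf{S}_Q(\eta_i) = \sum_\alpha\langle\sff_Q(e_\alpha, e_\alpha),\eta_i\rangle$ over an orthonormal tangent basis $\{e_\alpha\}$ and resumming $\sum_i\mathsf{H}_Q(\eta_i)\eta_i$ over a normal orthonormal basis $\{\eta_i\}$ collapses, because each $\sff_Q(e_\alpha, e_\alpha)$ already lies in $\mathbb{N}_Q\Gr(k,n)$, to $\tr(\sff_Q) \coloneqq \sum_\alpha\sff_Q(e_\alpha, e_\alpha)$; the factor $\frac{1}{k(n-k)} = \frac{1}{\dim\Gr(k,n)}$ appearing in the statement records the averaging normalization adopted for $\mathsf{H}_Q$.

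Then I would feed the explicit orthonormal basis \eqref{eq:onb}, namely $e_{ij} = \tfrac{\sqrt2}{2}\left[\begin{smallmatrix}0 & E_{ij}\\ E_{ij}^\tp & 0\end{smallmatrix}\right]$ for $i=1,\dots,k$ and $j=1,\dots,n-k$, into Theorem~\ref{thm:2ndFF}. Taking $X = Y = e_{ij}$ gives $X_0 = \tfrac{1}{\sqrt2}E_{ij}$, and the identities $E_{ij}E_{ij}^\tp = E_{ii}^{(k)}$, $E_{ij}^\tp E_{ij} = E_{jj}^{(n-k)}$ (the diagonal matrix units of sizes $k$ and $n-k$) make each $\sff_Q(e_{ij}, e_{ij})$ block-diagonal with entries proportional to $E_{ii}^{(k)}$ and $E_{jj}^{(n-k)}$. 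Summing is then pure bookkeeping: the top block sees each $E_{ii}^{(k)}$ once per value of $j$, accumulating to a multiple of $(n-k)I_k$, while the bottom block sees each $E_{jj}^{(n-k)}$ once per value of $i$, accumulating to a multiple of $kI_{n-k}$. Dividing by $k(n-k)$ produces the stated block matrix for $\mathsf{H}_Q$.

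Finally, the scalar mean curvature along $H$ follows from the relation $\mathsf{H}_Q(H) = \langle\mathsf{H}_Q, H\rangle$: pairing the block-diagonal $\mathsf{H}_Q$ against $H = V\diag(H_1, H_2)V^\tp$ in the trace inner product gives $\frac{(k-n)\tr H_1 + k\tr H_2}{2k(n-k)}$ immediately. The same linear functional of $H$ can be recovered directly from the shape operator of Corollary~\ref{cor:wein}, which acts on the tangent coordinate as $X_0 \mapsto \tfrac12(H_1 X_0 - X_0 H_2)$ and therefore has trace $\tfrac12((n-k)\tr H_1 - k\tr H_2)$, using that left multiplication by $H_1$ contributes $(n-k)\tr H_1$ and right multiplication by $H_2$ contributes $k\tr H_2$ on $\mathbb{R}^{k\times(n-k)}$. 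The one step requiring genuine care is keeping the sign of $\sff$ — fixed by the orientation of the unit normal — and the averaging normalization mutually consistent with Theorem~\ref{thm:2ndFF}; once these conventions are pinned down, the remainder is a mechanical summation.
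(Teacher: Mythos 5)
Your strategy coincides with the paper's own proof: reduce to $Q = I_{k,n-k}$ by homogeneity, sum the second fundamental form of Theorem~\ref{thm:2ndFF} over the orthonormal basis \eqref{eq:onb}, and get the scalar version from $\mathsf{H}_Q(H) = \langle \mathsf{H}_Q, H\rangle$. Two of your additions are genuinely useful: the explicit verification that $\sum_i \tr(\mathsf{S}_Q(\eta_i))\,\eta_i$ collapses to $\sum_\alpha \sff_Q(e_\alpha,e_\alpha)$ because each $\sff_Q(e_\alpha,e_\alpha)$ is already normal (the paper leaves this implicit in the identity $\mathsf{H}_Q = \tfrac{1}{k(n-k)}\tr(\sff_Q)$), and your reading of the factor $\tfrac{1}{k(n-k)}$ as an averaging convention, which correctly reconciles the corollary with the unnormalized definition of $\mathsf{H}_x$ in Section~\ref{sec:ezoo}.

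The one genuine gap is the sign, which you flag at the end but then dispose of incorrectly. If you substitute the formula of Theorem~\ref{thm:2ndFF} \emph{exactly as stated} (top block $+(X_0Y_0^\tp + Y_0X_0^\tp)$), your bookkeeping gives
\[
\tr(\sff_Q) \;=\; \frac{1}{2}\, V \begin{bmatrix} (n-k) I_k & 0 \\ 0 & -k I_{n-k} \end{bmatrix} V^\tp,
\]
which is the \emph{negative} of what the corollary asserts. This cannot be repaired by ``the orientation of the unit normal'': the codimension of $\Gr(k,n)$ in $\mathbb{S}^n$ exceeds one, so there is no unit normal to orient, and the vector-valued $\sff$ has its sign fixed unambiguously by \eqref{eq:2ndff}. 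What you have actually run into is a sign inconsistency inside the paper itself: the statement of Theorem~\ref{thm:2ndFF} and the final display of its proof differ by an overall sign, and Corollary~\ref{cor:meancurv} (like the paper's proof of it) follows the proof's sign, which is the correct one --- for instance the curve $Q(t) = \exp(tA)\, I_{k,n-k} \exp(-tA)$ with $A = \begin{bsmallmatrix} 0 & -X_0/2 \\ X_0^\tp/2 & 0 \end{bsmallmatrix}$ has purely normal acceleration $\ddot{Q}(0) = \diag(-X_0X_0^\tp,\, X_0^\tp X_0)$, i.e.\ negative top block. For the same reason your closing cross-check fails as stated: from Corollary~\ref{cor:wein}, $\tr \mathsf{S}_Q(H) = \tfrac12\bigl((n-k)\tr H_1 - k \tr H_2\bigr)$, which is $-k(n-k)$ times the corollary's $\mathsf{H}_Q(H)$; after the averaging normalization it produces the \emph{opposite} linear functional, not ``the same'' one, so your check exposes the inconsistency rather than confirming the formula. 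To close the proof you must commit to the sign of $\sff$ dictated by \eqref{eq:2ndff} (negative top block) and use it consistently in both the basis sum and the Weingarten comparison.
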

\begin{proof}
We use the orthonormal basis of $\mathbb{T}_{I_{k,n-k}} \Gr(k,n)$ in \eqref{eq:onb}. A straightforward but slightly messy calculation gives
\begin{align*}
(E_{ij} E_{i'j'}^\tp)_{pq} &= \begin{cases}
1 &\text{if } j = j' \text{ and } (p,q) = (i,i'), \\
0 &\text{otherwise},
\end{cases}
\shortintertext{for any $p,q \in \{1,\dots,k\}$; and}
(E_{ij}^\tp E_{i'j'})_{pq} &= \begin{cases}
1 &\text{if } i = i' \text{ and } (p,q) = (j,j') \\
0 &\text{otherwise}
\end{cases}
\end{align*}
for any $p,q \in \{1,\dots,n-k\}$.  Using these, we may evaluate
\begin{align*}
\sff_{I_{k,n-k}}
\biggl(\frac{\sqrt{2}}{2} \begin{bmatrix} 
0 & E_{ij} \\
E_{ij}^\tp & 0 
\end{bmatrix}
,  \frac{\sqrt{2}}{2}\begin{bmatrix} 
0 & E_{i'j'} \\
E_{i'j'}^\tp & 0 
\end{bmatrix}
\biggr)
&=\frac{1}{4} \begin{bmatrix}
-\delta_{jj'} (E_{ii'} + E_{i'i}) & 0 \\
0 & \delta_{ii'} (E_{jj'} + E_{j'j}) 
\end{bmatrix} \\
& =\frac{\delta_{jj'} \delta_{ii'}}{2}  \begin{bmatrix}
- E_{ii} & 0 \\
0 & E_{jj}
\end{bmatrix} 
\end{align*}
and obtain the required expression by summing over the basis. The mean curvature along $H$ is then calculated from $\mathsf{H}_Q (H) = \langle \mathsf{H}_Q,  H \rangle$.
\end{proof}

\begin{corollary}[Principal and Gaussian curvatures]\label{cor:prin}
Let $Q \in \Gr(k,n)$ and $H \in \mathbb{N}_Q \Gr(k,n)$ be parameterized as in \eqref{eq:para}. Then the Weingarten map $\mathsf{S}_Q(H)$ has eigenpairs given by
\[
\left( \frac12 (\lambda_{k+j} - \lambda_i), V \begin{bmatrix}
0 & Q_1 E_{ij} Q_2^\tp \\
Q_2 E_{ij}^\tp Q_1^\tp & 0 
\end{bmatrix} V^\tp \right),\quad  i=1,\dots,k, \; j =1,\dots, n-k,
\]
where $H_1 = Q_1 \Lambda_1 Q_1^\tp $ and $H_2 = Q_2 \Lambda_2 Q_2^\tp $ are eigenvalue decompositions with $\Lambda_1 = \diag(\lambda_1,\dots, \lambda_k)$ and $\Lambda_2 = \diag(\lambda_{k+1},\dots, \lambda_n)$.
\begin{enumerate}[\upshape (a)]
\item The principal curvatures of $\Gr(k,n)$ along $H$ are 
\[
\upkappa_{ij} = \frac12 (\lambda_{k+j} - \lambda_i),\quad  i=1,\dots,k, \; j =1,\dots, n-k.
\]

\item The Gaussian curvature of $\Gr(k,n)$ along $H$ is 
\[
\mathsf{G}_Q(H) = \frac{1}{2^{k(n-k)}}\prod_{i=1}^k \prod_{j=1}^{n-k} (\lambda_{k+j} - \lambda_i).
\]
\end{enumerate}
\end{corollary}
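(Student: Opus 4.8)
The plan is to read the eigenstructure of the Weingarten map directly off the matrix formula in Corollary~\ref{cor:wein}. Conjugation by $V$ identifies $\mathbb{T}_Q\Gr(k,n)$ with $\mathbb{R}^{k\times(n-k)}$ through $X\leftrightarrow X_0$, and under this identification the self-adjoint operator $\mathsf{S}_Q(H)$ becomes the Sylvester-type operator
\[
L\colon \mathbb{R}^{k\times(n-k)}\to\mathbb{R}^{k\times(n-k)},\qquad L(X_0)=\tfrac12\bigl(H_1 X_0 - X_0 H_2\bigr).
\]
Thus the whole problem reduces to diagonalizing $L$. I would begin by inserting the spectral decompositions $H_1=Q_1\Lambda_1 Q_1^\tp$ and $H_2=Q_2\Lambda_2 Q_2^\tp$, so that after the orthogonal change of variable $Y=Q_1^\tp X_0 Q_2$ the operator takes the diagonal form $Y\mapsto\tfrac12(\Lambda_1 Y-Y\Lambda_2)$.

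With $L$ in diagonal form the eigenvectors are immediate: I would test the candidate $X_0=Q_1 E_{ij}Q_2^\tp$, i.e.\ $Y=E_{ij}$, and apply the two elementary identities $\Lambda_1 E_{ij}=\lambda_i E_{ij}$ and $E_{ij}\Lambda_2=\lambda_{k+j}E_{ij}$, the latter because $\Lambda_2=\diag(\lambda_{k+1},\dots,\lambda_n)$ has $\lambda_{k+j}$ as its $j$th diagonal entry. These give $L(X_0)=\tfrac12(\lambda_i-\lambda_{k+j})X_0$, exhibiting $X_0=Q_1 E_{ij}Q_2^\tp$ as an eigenvector whose eigenvalue is, up to the overall orientation sign fixed by the convention for $\sff_Q$ in Theorem~\ref{thm:2ndFF}, the claimed principal curvature $\upkappa_{ij}=\tfrac12(\lambda_{k+j}-\lambda_i)$.

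To finish I would verify completeness and then read off the Gaussian curvature. Since $Q_1$ and $Q_2$ are orthogonal and $\{E_{ij}\}$ is a basis of $\mathbb{R}^{k\times(n-k)}$, the family $\{Q_1 E_{ij}Q_2^\tp\}$ is again a basis; we have therefore produced $k(n-k)=\dim\Gr(k,n)$ linearly independent eigenvectors, so the listed pairs are all of them and part~(a) follows. For part~(b), the Gaussian curvature is $\det\mathsf{S}_Q(H)$, which equals the product of the eigenvalues just found; multiplying the $k(n-k)$ scalars $\tfrac12(\lambda_{k+j}-\lambda_i)$ factors out a single $2^{-k(n-k)}$ and yields the stated expression for $\mathsf{G}_Q(H)$.

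I do not anticipate a substantive obstacle: once Corollary~\ref{cor:wein} has reduced the shape operator to the Sylvester operator $L$, everything is a textbook eigenvalue computation. The only two points demanding care are bookkeeping the index on $\Lambda_2$ so that the $j$th column of $E_{ij}$ is matched with $\lambda_{k+j}$ rather than $\lambda_j$, and reconciling the sign of the eigenvalue with the orientation convention used to define $\sff_Q$; the completeness count then guarantees that no principal curvature has been overlooked.
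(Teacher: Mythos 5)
Your proposal is correct and follows essentially the same route as the paper: conjugate by $V$ (and then by $\diag(Q_1,Q_2)$) to reduce $\mathsf{S}_Q(H)$ to the diagonal Sylvester operator $Y \mapsto \tfrac12(\Lambda_1 Y - Y\Lambda_2)$, read off the eigenpairs on the basis $\{E_{ij}\}$, count $k(n-k)$ independent eigenvectors for completeness, and take the determinant for the Gaussian curvature. The sign discrepancy you flag is real but is internal to the paper rather than a flaw in your argument: Corollary~\ref{cor:wein} as stated yields eigenvalue $\tfrac12(\lambda_i - \lambda_{k+j})$, exactly as you computed, while the paper's own proof of Corollary~\ref{cor:prin} silently writes $\tfrac12(\lambda_{k+j}-\lambda_i)$; the root cause is that the statement of Theorem~\ref{thm:2ndFF} carries the opposite sign from the expression actually derived at the end of its own proof, so your attribution of the flip to the sign convention for $\sff_Q$ is precisely the right diagnosis.
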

\begin{proof}
By \eqref{eq:wein}, we have
\[
\mathsf{S}_Q(H)\left(
V \begin{bmatrix}
0 & X_0 \\
X_0^\tp & 0
\end{bmatrix} V^\tp\right) = 
V \mathsf{S}_{I_{k,n-k}}\left( H_0  \right)
\left(
\begin{bmatrix}
0 & X_0 \\
X_0^\tp & 0
\end{bmatrix}
\right) V^\tp, \quad H_0 \coloneqq \begin{bmatrix}
H_1 & 0 \\
0 & H_2
\end{bmatrix}.
\]
Write $\Lambda \coloneqq \diag (\Lambda_1,\Lambda_2) = \diag(\lambda_1,\dots, \lambda_n)$. Then
\begin{align*}
\mathsf{S}_{I_{k,n-k}}(H_0)
\left( \begin{bmatrix}
0 & X_0 \\
X_0^\tp & 0
\end{bmatrix} \right) 
&= \frac{1}{2} \begin{bmatrix}
0 & H_1 X_0 - X_0 H_2 \\
(H_1 X_0 - X_0 H_2)^\tp & 0
\end{bmatrix} \\
&= \frac{1}{2} \begin{bmatrix}
0 & Q_1 \Lambda_1 Q_1^\tp X_0 - X_0 Q_2 \Lambda_2 Q_2^\tp \\
(Q_1 \Lambda_1 Q_1^\tp X_0 - X_0 Q_2 \Lambda_2 Q_2^\tp)^\tp & 0
\end{bmatrix} \\
&= \frac{1}{2} \begin{bmatrix}
Q_1 & 0 \\
0 & Q_2 
\end{bmatrix}
\begin{bmatrix}
0 &  Y_0  \\
Y_0^\tp & 0 
\end{bmatrix}
\begin{bmatrix}
Q_1 & 0 \\
0 & Q_2 
\end{bmatrix}^\tp \\
&=\begin{bmatrix}
Q_1 & 0 \\
0 & Q_2 
\end{bmatrix} \mathsf{S}_{I_{k,n-k}}(\Lambda) \biggl(
\begin{bmatrix}
0 & Q_1^\tp X_0 Q_2 \\ 
(Q_1^\tp X_0 Q_2)^\tp  & 0 
\end{bmatrix}
\biggr) \begin{bmatrix}
Q_1 & 0 \\
0 & Q_2 
\end{bmatrix}^\tp,
\end{align*}
where $Y_0 \coloneqq \Lambda_1 (Q_1^\tp X_0 Q_2) - (Q_1^\tp X_0 Q_2) \Lambda_2$. So it suffices to diagonalize the linear operator $\mathsf{S}_{I_{k,n-k}} (\Lambda) : \mathbb{T}_{I_{k,n-k}} \Gr(k,n) \to \mathbb{T}_{I_{k,n-k}} \Gr(k,n)$. Now observe that
\[
\mathsf{S}_{I_{k,n-k}} (\Lambda) \biggl( \frac{\sqrt{2}}{2} \begin{bmatrix} 
0 & E_{ij} \\
E_{ij}^\tp & 0 
\end{bmatrix} \biggr)  
= \frac{ \lambda_{k + j} - \lambda_i}{2} \biggl( \frac{\sqrt{2}}{2} \begin{bmatrix} 
0 & E_{ij} \\
E_{ij}^\tp & 0 
\end{bmatrix} \biggr)
\]
for $i=1,\dots,k$ and $j =1,\dots, n-k$, gives us the required diagonalization, which is an eigenvalue decomposition as \eqref{eq:onb} is an orthonormal basis. The values of the principal and Gaussian curvatures follow.
\end{proof}

The easiest way to calculate the third fundamental form is to get slightly ahead of our discussion and use the expression for Ricci curvature in Corollary~\ref{cor:Ricci} together with a result of Obata \cite[Theorem~1]{Obata68}. Otherwise we would have to start from the definition in Section~\ref{sec:ezoo}.
\begin{corollary}[Third fundamental form]\label{cor:third}
The third fundamental form $\tff_Q: \mathbb{T}_Q \Gr(k,n) \times \mathbb{T}_Q\Gr(k,n) \to  \mathbb{R}$ is given by
\[
\tff_Q(X,Y) =  \dfrac{1}{2 \Bigl( \dfrac{n}{2k(n-k)} - \dfrac{n-2}{4} \Bigr) \tr(X Y) =   \Bigl( \dfrac{n}{2k(n-k)} - \dfrac{n-2}{4} \Bigr) \tr(X_0^\tp Y_0),}
\]
with $Q$, $X$, $Y$ parameterized as in \eqref{eq:para}.
\end{corollary}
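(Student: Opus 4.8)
The plan is to sidestep computing the Gauss--Obata map $\mathsf{Q}_Q$ and hence $\tff_Q$ directly from its definition, and instead invoke the contracted Gauss equation, which is the substance of Obata's theorem \cite[Theorem~1]{Obata68}. For a submanifold of a flat Euclidean space, tracing the Gauss equation over an orthonormal tangent frame links the Ricci curvature to the second and third fundamental forms via
\[
\Ric_Q(X,Y) = \langle \mathsf{H}_Q, \sff_Q(X,Y) \rangle - \tff_Q(X,Y),
\]
equivalently $\tff_Q(X,Y) = \langle \mathsf{H}_Q, \sff_Q(X,Y)\rangle - \Ric_Q(X,Y)$. Granting this identity, the corollary becomes a substitution: $\sff_Q$ is supplied by Theorem~\ref{thm:2ndFF}, the mean curvature vector $\mathsf{H}_Q$ by Corollary~\ref{cor:meancurv}, and $\Ric_Q$ by Corollary~\ref{cor:Ricci}, so no new geometric input is needed.

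The only genuine computation is the pairing $\langle \mathsf{H}_Q, \sff_Q(X,Y)\rangle$. Since $\mathsf{H}_Q$ and $\sff_Q(X,Y)$ are conjugated by the same $V$ and are block diagonal, the pairing is the trace of a product of block-diagonal matrices, evaluated blockwise. I would first record the elementary trace identities
\[
\tr(X_0 Y_0^\tp) = \tr(Y_0 X_0^\tp) = \tr(X_0^\tp Y_0) = \tfrac{1}{2}\tr(XY),
\]
so that every trace collapses to the single invariant $\tr(XY)$. The upper block then contributes a multiple of $\tr(X_0 Y_0^\tp + Y_0 X_0^\tp)$ weighted by $-(n-k)$, and the lower block a multiple of $\tr(X_0^\tp Y_0 + Y_0^\tp X_0)$ weighted by $k$; because $(n-k)+k = n$ these combine cleanly, yielding $\langle \mathsf{H}_Q, \sff_Q(X,Y)\rangle = -\tfrac{n}{4k(n-k)}\tr(XY)$.

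It remains to add $-\Ric_Q(X,Y) = -\tfrac{n-2}{8}\tr(XY)$ and factor out $-\tfrac{1}{2}\tr(XY)$, which produces the coefficient $\tfrac{n}{2k(n-k)} + \tfrac{n-2}{4}$; rewriting $\tr(XY) = 2\tr(X_0^\tp Y_0)$ then gives the second displayed form. The main obstacle is not this algebra but calibrating Obata's relation to the precise conventions of Section~\ref{sec:ezoo}: being a contraction of the Gauss equation, both the coefficient attached to $\langle \mathsf{H}_Q, \sff_Q\rangle$ and the overall sign are sensitive to the normalization of the mean curvature vector and to the sign convention fixed for $\sff_Q$, so these must be tracked carefully against the definitions before the one-line trace computation can be trusted. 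As the consistency check flagged in the statement, one could instead proceed from the definition, computing $\tff_Q(X,Y) = \sum_\alpha \langle \mathsf{S}_Q(\eta_\alpha)X, \mathsf{S}_Q(\eta_\alpha)Y\rangle$ by squaring the Weingarten operator of Corollary~\ref{cor:wein} and summing over an orthonormal basis $\{\eta_\alpha\}$ of $\mathbb{N}_Q\Gr(k,n)$, using the resolution of the identity on $\mathbb{S}^k$ and $\mathbb{S}^{n-k}$.
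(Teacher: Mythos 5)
Your proposal is correct and follows the paper's own proof essentially verbatim: the paper likewise invokes Obata's relation $\tff_Q(X,Y) = \langle \sff_Q(X,Y), \mathsf{H}_Q \rangle - \Ric_Q(X,Y)$ from \cite[Theorem~1]{Obata68} and substitutes Theorem~\ref{thm:2ndFF}, Corollary~\ref{cor:meancurv}, and Corollary~\ref{cor:Ricci}, reducing everything to the same blockwise trace algebra you describe, with the same intermediate value $\langle \mathsf{H}_Q, \sff_Q(X,Y)\rangle = -\tfrac{n}{4k(n-k)}\tr(XY)$. Your closing caution about calibrating the sign of $\sff_Q$ and the normalization of $\mathsf{H}_Q$ against Obata's conventions is well placed, as those are precisely the delicate points of this argument that the paper's one-line substitution glosses over.
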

\begin{proof}
By \cite[Theorem~1]{Obata68},  we have 
\[
\tff_Q(X,Y) = \langle \sff_Q(X,Y),   \mathsf{H}_Q \rangle - \Ric(X,Y).
\]
By Theorem~\ref{thm:2ndFF}, Corollaries~\ref{cor:meancurv} and \ref{cor:Ricci},  we have 
\begin{align*}
\tff_Q(X,Y) 
&= \frac{1}{4k(n-k)}\bigl( (n-k)\tr(X_0 Y_0^\tp + Y_0 X_0^\tp) + k \tr (X_0^\tp Y_0 + Y_0^\tp X_0) \bigr) - \frac{n-2}{4}\tr(X_0^\tp Y_0) \\
&= \Bigl( \frac{n}{2k(n-k)} - \frac{n-2}{4} \Bigr) \tr(X_0^\tp Y_0). \qedhere
\end{align*}
\end{proof}

\section{Intrinsic curvatures of the Grassmannian}\label{sec:intrinsic}

As we will see in Section~\ref{sec:quot}, calculating intrinsic curvatures of Grassmannian with intrinsic geometry can get fairly involved. This is particularly striking for the Riemann curvature tensor ---  our calculation below is essentially one-line using the embedded geometry of the involution model.
\begin{proposition}[Riemmanian curvature]\label{prop:Rcurv}
The Riemann tensor $\Rie_Q: \mathbb{T}_Q \Gr(k,n) \times \mathbb{T}_Q \Gr(k,n) \times \mathbb{T}_Q \Gr(k,n) \times \mathbb{T}_Q\Gr(k,n) \to  \mathbb{R}$ is given by
\begin{align*}
 \Rie_Q(X,Y,Z,W)
& = \frac{1}{2}
\tr\bigl( (XY - YX) Z W \bigr) \\
&= \frac{1}{2} \tr\bigl( (X_0^\tp Y_0Z_0^\tp  + Z_0^\tp Y_0 X_0^\tp -  Y_0^\tp X_0 Z_0^\tp -  Z_0^\tp X_0 Y_0^\tp) W_0 \bigr)
\end{align*}
with $Q$, $X, Y, Z, W$  parameterized as in \eqref{eq:para}.
\end{proposition}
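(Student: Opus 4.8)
The plan is to avoid computing Christoffel symbols or iterated covariant derivatives altogether, and instead to exploit the Gauss equation, which is available to us precisely because our ambient space $\mathbb{S}^n$ is flat. Since the Riemann curvature of a Euclidean space vanishes, the Gauss equation collapses to a purely quadratic expression in the second fundamental form already determined in Theorem~\ref{thm:2ndFF}:
\[
\Rie_Q(X,Y,Z,W) = \langle \sff_Q(X,W), \sff_Q(Y,Z) \rangle - \langle \sff_Q(X,Z), \sff_Q(Y,W) \rangle,
\]
in the curvature convention of Section~\ref{sec:izoo} (see also \cite{Lee18}). This is the only structural input; everything after it is matrix algebra.

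The first step I would record is that the block formula of Theorem~\ref{thm:2ndFF} can be rewritten in the compact form
\[
\sff_Q(X,Y) = \tfrac12 Q(XY + YX),
\]
valid at every $Q \in \Gr(k,n)$ and not merely at $I_{k,n-k}$, as one checks by conjugating the block expression by $V$. Two elementary features of the involution model then do the real work: every tangent vector anticommutes with $Q$, i.e.\ $QX = -XQ$ for $X \in \mathbb{T}_Q\Gr(k,n)$, so the product of any two tangent vectors commutes with $Q$; combined with $Q^2 = I$ this gives, for tangent $A,B,C,D$,
\[
\langle \sff_Q(A,B), \sff_Q(C,D)\rangle = \tfrac14 \tr\bigl( Q(AB+BA)\,Q(CD+DC) \bigr) = \tfrac14 \tr\bigl( (AB+BA)(CD+DC) \bigr),
\]
since $AB+BA$ commutes with $Q$ forces $Q(AB+BA)Q = AB+BA$.

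Substituting into the Gauss equation yields $\Rie_Q(X,Y,Z,W) = \tfrac14\bigl[\tr((XW+WX)(YZ+ZY)) - \tr((XZ+ZX)(YW+WY))\bigr]$, which I would expand into eight trace monomials. The concluding step is pure bookkeeping: because $X,Y,Z,W$ are symmetric, each trace of a four-fold product is invariant under cyclic permutation of the word and under its reversal, so the monomials organize into dihedral orbits. Collecting terms, the four "mixed" monomials lying in the orbit of $\tr(XWYZ)$ occur with canceling signs, and what remains is exactly $\tfrac12\bigl(\tr(XYZW) - \tr(YXZW)\bigr) = \tfrac12\tr\bigl((XY-YX)ZW\bigr)$. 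The second displayed equality of the statement then follows by one further expansion of $X,Y,Z,W$ into their blocks $X_0,Y_0,Z_0,W_0$ together with cyclicity of the trace.

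The main obstacle is organizational rather than conceptual: correctly tracking the eight trace words and confirming that the mixed terms cancel. I expect this to stay manageable precisely because the rewrite $\sff_Q(X,Y) = \tfrac12 Q(XY+YX)$ eliminates the block decomposition from the heart of the argument, leaving only the dihedral symmetry of four-fold traces of symmetric matrices to be exploited — which is what makes the whole computation essentially a one-liner.
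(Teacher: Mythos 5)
Your proof is correct, and it rests on exactly the same structural inputs as the paper's: the Gauss equation for a submanifold of a flat ambient space together with the second fundamental form of Theorem~\ref{thm:2ndFF} --- indeed the paper's proof opens with the identical identity $\Rie_Q(X,Y,Z,W) = \langle \sff_Q(Y,Z), \sff_Q(X,W)\rangle - \langle \sff_Q(X,Z), \sff_Q(Y,W)\rangle$. Where you diverge is in the algebraic execution: the paper substitutes the block formula for $\sff_Q$ and evaluates the four inner products in terms of $X_0, Y_0, Z_0, W_0$, recognizing the total as $\tfrac14\langle [[X,Y],Z], W\rangle$ and then passing to $\tfrac12 \tr\bigl((XY-YX)ZW\bigr)$ by symmetry of the matrices; you instead first repackage Theorem~\ref{thm:2ndFF} in the coordinate-free form $\sff_Q(X,Y)=\tfrac12 Q(XY+YX)$, use the anticommutation $QX=-XQ$ together with $Q^2=I$ to strip $Q$ out of every inner product (legitimate, since values of $\sff_Q$ are symmetric so the transpose in the trace inner product can be dropped), and finish by cancelling trace monomials under cyclic permutation and reversal. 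I checked your bookkeeping: the two mixed-orbit monomials appear with coefficient $+2$ on each side of the difference and cancel, leaving $\tfrac12\bigl(\tr(XYZW)-\tr(YXZW)\bigr)$ as claimed. The two routes are of comparable length; yours buys freedom from block notation at the cost of verifying the compact form of $\sff_Q$ and tracking eight monomials, while the paper's buys the double-commutator expression $\tfrac14\langle[[X,Y],Z],W\rangle$ en route, which it then reuses verbatim in the proofs of Corollaries~\ref{cor:Scurv} and~\ref{cor:Ricci}.
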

\begin{proof}
Using the expression for $\sff_Q$,
\begin{align*}
 \Rie_Q(X,Y,Z,W) &= \langle \sff_Q(Y, Z), \sff_Q(X, W) \rangle - \langle \sff_Q(X, Z), \sff_Q(Y, W) \rangle\\
&=\begin{multlined}[t] \frac{1}{4}\langle Y_0Z_0^\tp+Z_0Y_0^\tp, X_0W_0^\tp+W_0X_0^\tp\rangle + \frac{1}{4}\langle Y_0^\tp Z_0+Z_0^\tp Y_0, X_0^\tp W_0+W_0^\tp X_0\rangle \\
-\frac{1}{4}\langle X_0Z_0^\tp+Z_0X_0^\tp, Y_0W_0^\tp+W_0Y_0^\tp\rangle - \frac{1}{4}\langle X_0^\tp Z_0+Z_0^\tp X_0, Y_0^\tp W_0+W_0^\tp Y_0\rangle
\end{multlined} \\
&=\frac{1}{4}\langle [[X, Y], Z], W \rangle = \frac{1}{2}
\tr\bigl( (XY - YX) Z W \bigr),
\end{align*}
where the last equality is obtained by observing that $X,Y,Z,W$ are symmetric matrices.
\end{proof}

\begin{corollary}[Jacobi curvature]\label{cor:Jcurv}
The Jacobi tensor $\mathsf{J}_Q: \mathbb{T}_Q \Gr(k,n) \times \mathbb{T}_Q \Gr(k,n) \times \mathbb{T}_Q \Gr(k,n) \times \mathbb{T}_Q\Gr(k,n) \to  \mathbb{R}$ is
\begin{align*}
\mathsf{J}_Q(X, Y, Z, W) &= \tr(XYZW)  - \tr\Bigl(  Y \Bigl(\frac{X Z  + ZX}{2}\Bigr) W  \Bigr) \\
&=\begin{multlined}[t]
 \tr\bigl(  (X_0^\tp Y_0Z_0^\tp  + Z_0^\tp Y_0 X_0^\tp) W_0 \bigr) \\
 - \tr\Bigl( Y_0^\tp \Bigl(\frac{X_0 Z_0^\tp + Z_0X_0^\tp}{2} \Bigr) W_0 \Bigr) - \tr \Bigl( \Bigl( \frac{Z_0^\tp X_0 + X_0^\tp Z_0}{2} \Bigr) Y_0^\tp W_0 \bigr),
\end{multlined}
\end{align*}
with $Q$,  $X, Y, Z, W$  parameterized as in \eqref{eq:para}.
\end{corollary}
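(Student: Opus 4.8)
The plan is to substitute the Riemann curvature formula of Proposition~\ref{prop:Rcurv} directly into the definition of the Jacobi tensor given in Section~\ref{sec:izoo}. By definition $\mathsf{J}_Q(X,Y,Z,W) = \tfrac12\bigl(\Rie_Q(X,Y,Z,W) + \Rie_Q(Z,Y,X,W)\bigr)$, so the Jacobi tensor merely symmetrizes the Riemann tensor in its first and third slots. Feeding in $\Rie_Q(X,Y,Z,W) = \tfrac12\tr\bigl((XY-YX)ZW\bigr)$, both summands become traces of commutator products, and the entire computation reduces to manipulating traces of products of the four \emph{symmetric} matrices $X,Y,Z,W$. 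No new geometric input beyond Proposition~\ref{prop:Rcurv} is needed, which is exactly why this is a corollary.

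Next I would expand the two commutators, producing the four monomials $\tr(XYZW)$, $\tr(YXZW)$, $\tr(ZYXW)$, and $\tr(YZXW)$. The crucial simplification exploits symmetry of the tangent vectors: under a trace a product of symmetric matrices may be reversed, since $\tr(XYZW) = \tr\bigl((XYZW)^\tp\bigr) = \tr(WZYX)$, and cyclic invariance then gives $\tr(WZYX) = \tr(ZYXW)$. Hence $\tr(XYZW) = \tr(ZYXW)$, so these two monomials coalesce. The two remaining monomials recombine through the anticommutator,
\[
\tr(YXZW) + \tr(YZXW) = \tr\bigl(Y(XZ+ZX)W\bigr),
\]
which produces exactly the factor $\tfrac{XZ+ZX}{2}$ appearing in the statement; after collecting the numerical constants this yields the first displayed expression for $\mathsf{J}_Q$.

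For the second displayed form I would substitute the block parameterization \eqref{eq:para} and carry out the $2\times 2$ block multiplications, reading off the diagonal blocks whose traces survive; this mirrors the passage from the first to the second line in Proposition~\ref{prop:Rcurv}, and the symmetric block identities used in the index-of-relative-nullity and Weingarten computations apply verbatim. The only genuine obstacle is bookkeeping: one must apply the symmetric-matrix reversal identity together with cyclic invariance consistently, so as neither to miscount the coalescing terms nor to flip a sign, and the block expansion, while routine, is mildly tedious. I expect no conceptual difficulty, only the need for care in tracking the orderings inside each trace.
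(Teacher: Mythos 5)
Your proposal follows the paper's proof essentially verbatim: both substitute Proposition~\ref{prop:Rcurv} into the definition of $\mathsf{J}_Q$, merge $\tr(XYZW)$ with $\tr(ZYXW)$ via the transpose-plus-cyclicity identity for symmetric matrices, recombine the remaining two monomials into the anticommutator term, and obtain the second display by block expansion of the parameterization \eqref{eq:para}; all of these algebraic steps are valid. One caveat, which affects the paper's own proof equally: with the paper's normalization $\mathsf{J}_Q(X,Y,Z,W) = \tfrac12\bigl(\Rie_Q(X,Y,Z,W) + \Rie_Q(Z,Y,X,W)\bigr)$ and $\Rie_Q(X,Y,Z,W) = \tfrac12 \tr\bigl((XY-YX)ZW\bigr)$, an honest collection of constants gives
\[
\mathsf{J}_Q(X,Y,Z,W) = \frac14\bigl( 2\tr(XYZW) - \tr(Y(XZ+ZX)W) \bigr) = \frac12\biggl( \tr(XYZW) - \tr\Bigl(Y\Bigl(\frac{XZ+ZX}{2}\Bigr)W\Bigr) \biggr),
\]
which is one half of the stated expression; the formula as stated corresponds instead to the unnormalized symmetrization $\Rie_Q(X,Y,Z,W)+\Rie_Q(Z,Y,X,W)$. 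So your closing assertion that collecting the numerical constants yields the first displayed expression conceals exactly the same factor-of-$2$ slip that the paper's proof makes (the paper writes $\tfrac12\tr\bigl((2XYZ - Y(XZ+ZX))W\bigr)$ where its own definitions produce a prefactor of $\tfrac14$); aside from this shared normalization discrepancy, your argument and the paper's are the same.
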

\begin{proof}
The expression in Proposition~\ref{prop:Rcurv} and the fact that $X,Y,Z,W$ are symmetric matrices yield
\[
\mathsf{J}_Q(X, Y, Z, W) = \frac{1}{2}\tr\bigl(  ( 2XYZ -   Y (X Z  + ZX  ) W  \bigr)
\]
and thus the first expression. Plugging in the parameterizations in \eqref{eq:para} for $X,Y,Z,W$ gives the second expression.
\end{proof}

\begin{corollary}[Sectional curvature]\label{cor:Scurv}
The sectional curvature $\upkappa_Q : \mathbb{T}_Q \Gr(k,n) \times \mathbb{T}_Q\Gr(k,n) \to  \mathbb{R}$ is given by
\[
\upkappa_Q(X,Y) = \frac{\|[X, Y]\|^2}{4(\lVert X \rVert^2 \lVert Y \rVert^2 - \langle X,  Y \rangle^2)} =  \frac{\lVert [X_0,  Y_0^\tp] \rVert^2 + 
\lVert [X_0^\tp,  Y_0] \rVert^2 
}{16(\lVert X_0 \rVert^2 \lVert Y_0 \rVert^2 - \langle X_0,  Y_0 \rangle^2)} \le \frac{1}{4},
\]
with $Q$, $X$, $Y$ parameterized as in \eqref{eq:para}. If $X, Y$ are orthonormal,  i.e.,  $\lVert X_0 \rVert = \lVert Y_0 \rVert = \sqrt{2}/2$ and 
$\langle X_0,  Y_0 \rangle = 0$,  then
\[
\upkappa_Q(X,Y) = \frac{\|[X, Y]\|^2}{4} =  \frac{1}{4} (\lVert [X_0,  Y_0^\tp] \rVert^2 + 
\lVert [X_0^\tp,  Y_0] \rVert^2).
\] 
\end{corollary}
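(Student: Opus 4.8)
The plan is to read off the formula by specializing the Riemann tensor of Proposition~\ref{prop:Rcurv}, and then to obtain the bound $\upkappa_Q\le\tfrac14$ by reducing it to a sharp commutator inequality in $\mathfrak{so}(n)$.

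First I would compute the numerator $\Rie_Q(X,Y,Y,X)$. Setting $Z=Y$ and $W=X$ in Proposition~\ref{prop:Rcurv} gives $\Rie_Q(X,Y,Y,X)=\tfrac12\tr\bigl((XY-YX)YX\bigr)$. Since $X,Y\in\mathbb{S}^n$, the commutator $[X,Y]$ is skew-symmetric, so $\lVert[X,Y]\rVert^2=-\tr([X,Y]^2)=2\tr(X^2Y^2)-2\tr((XY)^2)$, while a cyclic rearrangement gives $\tr\bigl((XY-YX)YX\bigr)=\tr(X^2Y^2)-\tr((XY)^2)$. Hence $\Rie_Q(X,Y,Y,X)=\tfrac14\lVert[X,Y]\rVert^2$, and dividing by $\lVert X\rVert^2\lVert Y\rVert^2-\langle X,Y\rangle^2$ produces the first expression. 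To reach the second, note that conjugation by $V$ preserves the trace inner product and multiply the blocks in \eqref{eq:para}: $[X,Y]$ is block-diagonal with blocks $X_0Y_0^\tp-Y_0X_0^\tp$ and $X_0^\tp Y_0-Y_0^\tp X_0$ (which the statement abbreviates as $[X_0,Y_0^\tp]$ and $[X_0^\tp,Y_0]$), so $\lVert[X,Y]\rVert^2=\lVert[X_0,Y_0^\tp]\rVert^2+\lVert[X_0^\tp,Y_0]\rVert^2$; combined with $\lVert X\rVert^2=2\lVert X_0\rVert^2$ and $\langle X,Y\rangle=2\langle X_0,Y_0\rangle$ from Proposition~\ref{prop:fff}, this yields the factor $16$ in the denominator. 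The orthonormal case is then immediate.

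For the bound I would use homogeneity to reduce to $Q=I_{k,n-k}$ and introduce the skew-symmetric matrices $\widehat X=\begin{bmatrix}0&X_0\\-X_0^\tp&0\end{bmatrix}$ and $\widehat Y=\begin{bmatrix}0&Y_0\\-Y_0^\tp&0\end{bmatrix}$ in $\mathfrak{so}(n)$, obtained by flipping the sign of the lower block. A one-line block computation gives $[\widehat X,\widehat Y]=-\diag(X_0Y_0^\tp-Y_0X_0^\tp,\,X_0^\tp Y_0-Y_0^\tp X_0)$, so that $\lVert[\widehat X,\widehat Y]\rVert^2=\lVert[X,Y]\rVert^2$, while also $\lVert\widehat X\rVert^2=\lVert X\rVert^2$ and $\langle\widehat X,\widehat Y\rangle=\langle X,Y\rangle$. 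Therefore $\upkappa_Q(X,Y)$ is the very same ratio formed from $\widehat X,\widehat Y$, and the desired bound collapses to the purely linear-algebraic inequality
\[
\lVert[S,T]\rVert^2\le\lVert S\rVert^2\lVert T\rVert^2-\langle S,T\rangle^2,\qquad S,T\in\mathfrak{so}(n).
\]

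The main obstacle is precisely this commutator inequality: the generic Böttcher--Wenzel bound $\lVert[S,T]\rVert^2\le 2\lVert S\rVert^2\lVert T\rVert^2$ is too weak by a factor of $2$ and would only give $\upkappa_Q\le\tfrac12$, so a constant that is sharp for skew-symmetric matrices is genuinely needed. Replacing $T$ by its component orthogonal to $S$ leaves $[S,T]$ unchanged and reduces matters to $\lVert[S,T]\rVert^2\le\lVert S\rVert^2\lVert T\rVert^2$. I would then write $\lVert[S,T]\rVert=\lVert\operatorname{ad}_S T\rVert\le\lVert\operatorname{ad}_S\rVert_{\mathrm{op}}\lVert T\rVert$, using that $\operatorname{ad}_S$ is skew-adjoint for the Frobenius inner product (so its operator norm equals its spectral radius), and prove the sharp estimate $\lVert\operatorname{ad}_S\rVert_{\mathrm{op}}\le\lVert S\rVert$. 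Block-diagonalizing $S$ into plane rotations with angles $\theta_1\ge\theta_2\ge\cdots\ge 0$, the nonzero eigenvalues of $\operatorname{ad}_S$ on $\mathfrak{so}(n)$ are $\pm i(\theta_a\pm\theta_b)$ together with $\pm i\theta_a$, whence $\lVert\operatorname{ad}_S\rVert_{\mathrm{op}}=\theta_1+\theta_2$ and $(\theta_1+\theta_2)^2\le 2(\theta_1^2+\theta_2^2)\le\sum_a 2\theta_a^2=\lVert S\rVert^2$. Establishing this spectral description of $\operatorname{ad}_S$ is the one nontrivial point; everything else is bookkeeping. Equality forces $S$ to have exactly two equal nonzero angles, i.e.\ the complex-structure configuration (e.g.\ $X_0=\tfrac1{\sqrt2}I_2$, $Y_0=\tfrac1{\sqrt2}\begin{bmatrix}0&1\\-1&0\end{bmatrix}$ in $\Gr(2,\mathbb{R}^4)$), which is exactly where $\upkappa_Q=\tfrac14$ is attained.
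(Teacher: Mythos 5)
Your proposal is correct, and the derivation of the displayed formulas is essentially the paper's own: both specialize Proposition~\ref{prop:Rcurv} to get $\Rie_Q(X,Y,Y,X)=\tfrac14\lVert[X,Y]\rVert^2$, perform the same block computation of $[X,Y]$, and reduce the bound to the orthonormal (or orthogonal-component) case. Where you genuinely diverge is in proving $\upkappa_Q\le\tfrac14$. The paper disposes of this in one line by citing the B\"ottcher--Wenzel inequality $\lVert[A,B]\rVert^2\le 2\lVert A\rVert^2\lVert B\rVert^2$; as you correctly note, applied to the full matrices $X,Y$ this only yields $\tfrac12$ --- it delivers $\tfrac14$ only when applied blockwise to $X_0,Y_0$ (whose squared norms $\tfrac12$ absorb the factor of $2$), which moreover requires a rectangular variant of the inequality. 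Your route instead passes to $\mathfrak{so}(n)$ via the sign flip $X\mapsto\widehat X$, which preserves commutators, norms, and inner products, and proves the sharp skew-symmetric inequality $\lVert[S,T]\rVert\le\lVert S\rVert\lVert T\rVert$ by spectral analysis of the normal operator $\mathrm{ad}_S$. This is more work than the paper's citation, but it is self-contained, it makes the origin of the constant $\tfrac14$ transparent, and it identifies the equality configuration, consistent with Wong's theorem quoted later in the paper. One point to state carefully in a final write-up: in your list of eigenvalues of $\mathrm{ad}_S$ on $\mathfrak{so}(n)$ the combinations $\pm i(\theta_a+\theta_b)$ must have $a\ne b$ (the eigenvectors for $\pm 2i\theta_a$ are complex \emph{symmetric}, not skew); your stated conclusion $\lVert\mathrm{ad}_S\rVert_{\mathrm{op}}=\theta_1+\theta_2$ shows you intend this, but if $a=b$ were allowed the claimed bound $\lVert\mathrm{ad}_S\rVert_{\mathrm{op}}\le\lVert S\rVert$ would fail for a single rotation block, and indeed this exclusion is precisely what distinguishes the skew case from the symmetric one, where the constant $2$ is sharp.
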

\begin{proof}
This is a straightforward calculation by
\[
\upkappa_Q(X,Y) =  \frac{\Rie_Q(X,Y,Y,X)}{\lVert X \rVert^2 \lVert Y \rVert^2 - \langle X,  Y \rangle^2}
= \frac{1}{4}\frac{\langle [[X, Y], Y], X \rangle}{\lVert X \rVert^2 \lVert Y \rVert^2 - \langle X,  Y \rangle^2}
= \frac{\|[X, Y]\|^2}{4(\lVert X \rVert^2 \lVert Y \rVert^2 - \langle X,  Y \rangle^2)}
\]
and the observations that
\begin{gather*}
[X,  Y] = V \left(
\begin{bmatrix}
X_0 Y_0^\tp - Y_0 X_0^\tp  & 0 \\
0 & X_0^\tp Y_0 - Y_0^\tp X_0
\end{bmatrix}
\right) V^\tp,  \\
\lVert X \rVert^2 = 2 \lVert X_0 \rVert^2,\quad  \lVert Y \rVert^2 = 2 \lVert Y_0 \rVert^2,  \quad  \lVert \langle X,  Y \rangle \rVert^2 = 2 \langle X_0,  Y_0  \rangle. 
\end{gather*}
Since $\upkappa_Q(X,Y)$ only depends on the two-dimensional subspace of $\mathbb{T}_Q \Gr(k,n)$ spanned by $X$ and $Y$,  it suffices to assume that $X,Y$ are orthonormal.  The upper bound $\upkappa_Q(X,Y) \le 1/4$ then follows from the inequality $\lVert [A,  B] \rVert^2 \le 2 \lVert A \rVert^2   \lVert B \rVert^2$ for any $A, B \in \mathbb{R}^{n \times n}$.
\end{proof}

It is known that the Grassmannian manifolds are Einstein \cite[Paragraphs~0.25 and 0.26]{besse2007}. Our calculations below confirm the fact.
\begin{corollary}[Ricci and scalar curvatures]\label{cor:Ricci}
Let $Q$, $X$, $Y$ be parameterized as in \eqref{eq:para}. The Ricci tensor $\Ric_Q : \mathbb{T}_Q \Gr(k,n) \times \mathbb{T}_Q\Gr(k,n) \to  \mathbb{R}$  is given by 
\[
\Ric_Q(X,Y) = \frac{(n-2)}{8}\tr(X Y) = \frac{(n-2)}{4} \tr(X_0^\tp Y_0).
\]
The scalar curvature $\Sca_Q  \in \mathbb{R}$ is given by 
\[
\Sca_Q = \frac{k(n-k)(n-2)}{8}.
\]
The traceless Ricci curvature  $\mathsf{Z}_Q : \mathbb{T}_Q \Gr(k,n) \times \mathbb{T}_Q\Gr(k,n) \to  \mathbb{R}$ is given by 
\[
\mathsf{Z}_Q(X,Y) = 0,
\]
which shows that the Grassmannian is an Einstein manifold.
\end{corollary}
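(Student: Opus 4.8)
The plan is to obtain the Ricci tensor directly from the Riemann curvature formula of Proposition~\ref{prop:Rcurv} by contracting it against the orthonormal basis \eqref{eq:onb}, and then to read off the scalar and traceless Ricci curvatures as immediate consequences. Because $\O(n)$ acts on $\Gr(k,n)$ transitively by isometries and Ricci curvature is an isometry invariant, it suffices to work at the base point $I_{k,n-k}$, where $V = I$ and each tangent vector is determined by its off-diagonal block $X_0$; since the target answer $\tfrac{n-2}{8}\tr(XY)$ is invariant under conjugation $X \mapsto V X V^\tp$, this loses no generality. Denoting by $e_{ij}$ the basis vectors in \eqref{eq:onb}, the definition of Ricci curvature specializes to $\Ric_Q(X,Y) = \sum_{i,j} \Rie_Q(X, e_{ij}, e_{ij}, Y)$.

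Substituting Proposition~\ref{prop:Rcurv} gives $\Rie_Q(X, e_{ij}, e_{ij}, Y) = \tfrac12\bigl(\tr(X e_{ij}^2 Y) - \tr(e_{ij} X e_{ij} Y)\bigr)$, so the contraction splits into two sums. The first is routine: a short block computation yields $\sum_{i,j} e_{ij}^2 = \tfrac12 \diag\bigl((n-k) I_k,\, k I_{n-k}\bigr)$, from which $\sum_{i,j}\tr(X e_{ij}^2 Y) = \tfrac{n}{2}\tr(X_0^\tp Y_0)$. I expect the \emph{sandwich} sum $\sum_{i,j}\tr(e_{ij} X e_{ij} Y)$ to be the main obstacle. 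After expanding the blocks it reduces to evaluating $\sum_{i,j}\tr(E_{ij} X_0^\tp E_{ij} Y_0^\tp)$ together with its transpose analogue, and the crux is the collapsing identity $E_{ij} X_0^\tp E_{ij} = (X_0)_{ij} E_{ij}$, which turns each summand into $(X_0)_{ij}(Y_0)_{ij}$ and hence gives $\sum_{i,j}\tr(e_{ij} X e_{ij} Y) = \tr(X_0^\tp Y_0)$. Combining the two sums, $\Ric_Q(X,Y) = \tfrac12\bigl(\tfrac{n}{2} - 1\bigr)\tr(X_0^\tp Y_0) = \tfrac{n-2}{4}\tr(X_0^\tp Y_0)$, and $\tr(XY) = 2\tr(X_0^\tp Y_0)$ recovers the stated $\tfrac{n-2}{8}\tr(XY)$.

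The remaining two claims then follow quickly. For the scalar curvature I would trace $\Ric$ over the same basis: each $e_{ij}$ has unit norm, so $\Ric_Q(e_{ij}, e_{ij}) = \tfrac{n-2}{8}$, and summing over all $k(n-k)$ basis vectors gives $\Sca_Q = \tfrac{k(n-k)(n-2)}{8}$. For the traceless Ricci tensor I would substitute into its definition, taking care that the dimension appearing there is $\dim\Gr(k,n) = k(n-k)$ rather than the ambient dimension $n$ --- this notational coincidence is the one place where a careless reading would produce the wrong constant. With the correct dimension, $\Sca_Q / \bigl(k(n-k)\bigr) = \tfrac{n-2}{8}$, so $\tfrac{\Sca_Q}{k(n-k)}\,\mathsf{g}_Q(X,Y) = \tfrac{n-2}{8}\tr(XY) = \Ric_Q(X,Y)$, and therefore $\mathsf{Z}_Q(X,Y) = 0$, which is precisely the assertion that $\Gr(k,n)$ is an Einstein manifold.
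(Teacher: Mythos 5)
Your proof is correct and takes essentially the same approach as the paper's: both contract the Riemann curvature formula of Proposition~\ref{prop:Rcurv} against the orthonormal basis \eqref{eq:onb}, splitting the contraction into a diagonal sum $\sum_{i,j} e_{ij}^2$ and a sandwich sum, and then deduce $\Sca_Q$ and $\mathsf{Z}_Q = 0$ (correctly using the dimension $k(n-k)$ rather than $n$) from $\Ric_Q = \tfrac{n-2}{8}\mathsf{g}_Q$. The only cosmetic differences are that the paper works at a general $Q$ with the $V$-conjugated basis and states the value of the two sums without showing the evaluation, whereas you reduce to $I_{k,n-k}$ by homogeneity and make the collapsing identity $E_{ij}X_0^\tp E_{ij} = (X_0)_{ij}E_{ij}$ explicit.
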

\begin{proof}
We write
\[
X_{ij} \coloneqq \frac{\sqrt{2}}{2}V \begin{bmatrix}
0 & E_{ij} \\
E_{ij}^\tp & 0
\end{bmatrix} V^\tp
\]
for the elements of the orthonormal basis in \eqref{eq:onb}. Then
\begin{align*}
\Ric_Q(X, Y) &= \sum_{i=1}^{k}\sum_{j=1}^{n-k} \Rie_Q(X_{ij},X,  Y, X_{ij}) = \sum_{i=1}^{k}\sum_{j=1}^{n-k}\frac{1}{4}\langle [[X_{ij}, X], Y], X_{ij} \rangle \rangle\\
&=\frac{1}{2}\sum_{i=1}^{k}\sum_{j=1}^{n-k}\tr( X_{ij}^2 XY - X X_{ij} Y X_{ij})\\
&=\frac{n-2}{8} \tr(X Y) = \frac{\Sca_Q}{k(n-k)} \mathsf{g}_Q(X, Y),
\end{align*}
where the last equality shows that $\mathsf{Z}_Q$ vanishes identically.
\end{proof}

For a homogeneous space like $\Gr(k,n)$,  the upper and lower delta invariants $\overline{\updelta}_{Q} (d_1,\dots,  d_r)$ and $\underline{\updelta}_{Q} (d_1,\dots,  d_r)$ are independent of the choice of $Q \in \Gr(k,n)$. We also restrict our attention to $d_1 = \cdots = d_r = 2$.  So for notational simplicity, we just write
\[
\overline{\updelta}_{2,r} \coloneqq \overline{\updelta}_{Q} (\underbrace{2,\dots,  2}_{\text{$r$ times}}),\quad \underline{\updelta}_{2,r} \coloneqq \underline{\updelta}_{Q} (\underbrace{2,\dots,  2}_{\text{$r$ times}}).
\]
\begin{theorem}[Delta invariants]\label{thm:delta}
Let $r \le  2 \lfloor k/2 \rfloor \lfloor (n-k)/2 \rfloor$. Then the upper and lower delta invariants of $\Gr(k,n)$ are given by
\[
\overline{\updelta}_{2,r} = \frac{k(n-k)(n-2)}{8},\qquad  \underline{\updelta}_{2,r} = \frac{k(n-k)(n-2)}{8} - \frac{r}{4}.
\]
\end{theorem}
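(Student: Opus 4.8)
The plan is to reduce the entire statement to the sectional-curvature bounds already in Corollary~\ref{cor:Scurv}. Because every competing subspace $\mathbb{V}_j$ is two-dimensional, its restricted scalar curvature collapses to a single sectional curvature: if $\{v_1^{(j)}, v_2^{(j)}\}$ is an orthonormal basis of $\mathbb{V}_j$, then $\Sca_Q(\mathbb{V}_j) = \upkappa_Q(v_1^{(j)}, v_2^{(j)})$, so that $\sum_{j=1}^r \Sca_Q(\mathbb{V}_j) = \sum_{j=1}^r \upkappa_Q(v_1^{(j)}, v_2^{(j)})$. Corollary~\ref{cor:Scurv} bounds each summand by $0 \le \upkappa_Q \le 1/4$, with nonnegativity immediate since the numerator $\lVert [X,Y] \rVert^2$ is a square. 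Hence $0 \le \sum_j \Sca_Q(\mathbb{V}_j) \le r/4$ for every admissible family of mutually orthogonal $2$-planes, and using $\Sca_Q = k(n-k)(n-2)/8$ from Corollary~\ref{cor:Ricci} I immediately obtain the one-sided estimates $\overline{\updelta}_{2,r} \le \Sca_Q$ and $\underline{\updelta}_{2,r} \ge \Sca_Q - r/4$. Everything then rests on exhibiting families that make $\sum_j \Sca_Q(\mathbb{V}_j)$ equal to $0$ and to $r/4$, respectively.

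To construct these extremal families I would work in the orthonormal basis $\{X_{ij}\}$ of \eqref{eq:onb}, grouping its indices into $2\times 2$ blocks. Partition $\{1,\dots,k\}$ into $\lfloor k/2 \rfloor$ disjoint pairs and $\{1,\dots,n-k\}$ into $\lfloor (n-k)/2 \rfloor$ disjoint pairs; each row-pair $\{a,a'\}$ together with each column-pair $\{b,b'\}$ singles out four basis directions $X_{ab}, X_{ab'}, X_{a'b}, X_{a'b'}$ spanning a four-dimensional subspace isometric to $\mathbb{T}_{I_{2,2}} \Gr(2,4)$. Inside each block I produce two mutually orthogonal $2$-planes of each desired type. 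For the infimum I take the \emph{axis-aligned} planes $\spn\{X_{ab}, X_{a'b'}\}$ and $\spn\{X_{ab'}, X_{a'b}\}$: because their generators carry distinct row and distinct column indices, the commutator formula underlying Corollary~\ref{cor:Scurv} forces the bracket to vanish, so each plane is flat and $\upkappa_Q = 0$. For the supremum I take the \emph{rotated} planes $\spn\{X_{ab}+X_{a'b'}, X_{ab'}-X_{a'b}\}$ and $\spn\{X_{ab}-X_{a'b'}, X_{ab'}+X_{a'b}\}$, whose (normalization-independent) sectional curvature computes via Corollary~\ref{cor:Scurv} to the maximal value $\upkappa_Q = 1/4$.

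The two planes inside a block are orthogonal because they pair complementary combinations of the four block directions, and planes from different blocks are orthogonal since they involve disjoint basis vectors; thus each of the $\lfloor k/2 \rfloor \lfloor (n-k)/2 \rfloor$ blocks contributes two mutually orthogonal planes, for a total of $2 \lfloor k/2 \rfloor \lfloor (n-k)/2 \rfloor$ planes of each type. Under the hypothesis $r \le 2 \lfloor k/2 \rfloor \lfloor (n-k)/2 \rfloor$ I may select any $r$ of them, realizing $\sum_j \Sca_Q(\mathbb{V}_j) = 0$ with the flat planes and $\sum_j \Sca_Q(\mathbb{V}_j) = r/4$ with the maximal planes. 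This forces the infimum to be $0$ and the supremum to be $r/4$, yielding the two stated formulas. I expect the main obstacle to be precisely this combinatorial bookkeeping: verifying through the block computation that the two plane-types genuinely attain $\upkappa_Q = 0$ and $\upkappa_Q = 1/4$, while simultaneously guaranteeing that $r$ of them can be chosen mutually orthogonal. This is exactly where the hypothesis on $r$ enters and what makes the two one-sided estimates tight; note in particular that when $k = 1$ or $n-k = 1$ the bound degenerates to $0$, consistent with $\Gr(1,\mathbb{R}^m)$ having constant sectional curvature and admitting neither flat nor maximally curved $2$-planes.
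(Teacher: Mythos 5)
Your proposal is correct and follows the same two-step skeleton as the paper's proof: first use Corollary~\ref{cor:Ricci} to write $\overline{\updelta}_{2,r}$ and $\underline{\updelta}_{2,r}$ as $\Sca_Q$ minus an infimum/supremum of sums of sectional curvatures, bound each summand in $[0,\tfrac14]$ via Corollary~\ref{cor:Scurv}, and then exhibit $r$ mutually orthogonal $2$-planes attaining each extreme; your partition of the index set $\{1,\dots,k\}\times\{1,\dots,n-k\}$ into $2\times 2$ blocks is exactly the paper's block decomposition, which reduces everything to $\Gr(2,4)$. Where you differ is in the explicit extremal planes, and this difference is in your favor. Your flat planes $\spn\{X_{ab},X_{a'b'}\}$, $\spn\{X_{ab'},X_{a'b}\}$ do have vanishing brackets (the paper uses a different but equally valid flat configuration). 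More importantly, your maximal planes $\spn\{X_{ab}+X_{a'b'},\,X_{ab'}-X_{a'b}\}$ and $\spn\{X_{ab}-X_{a'b'},\,X_{ab'}+X_{a'b}\}$ genuinely attain $\upkappa_Q=\tfrac14$: in block form the first plane is spanned by $X_0=I_2$ and $Y_0=E_{12}-E_{21}$, the classical complex-structure (isoclinic) plane, and a direct computation gives $\lVert[X,Y]\rVert^2=4$ with $\lVert X\rVert^2=\lVert Y\rVert^2=2$, hence $\upkappa_Q=\tfrac14$. By contrast, the paper's printed upper-bound pairs, namely $X_1=X_{11}$, $Y_1=\tfrac{1}{\sqrt2}(X_{12}+X_{21})$ and $X_2=X_{22}$, $Y_2=\tfrac{1}{\sqrt2}(X_{12}-X_{21})$, each yield $\lVert[X_i,Y_i]\rVert^2=\tfrac12$ and hence $\upkappa_Q(X_i,Y_i)=\tfrac18$, so their sum is $\tfrac14$, not the claimed $\tfrac12$; your configuration therefore repairs an apparent error in the paper's own attainment step for the supremum. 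Two small cautions when you write this up: (i) your key curvature values are asserted rather than computed, and they must be checked with the full-matrix formula $\upkappa_Q=\lVert[X,Y]\rVert^2/\bigl(4(\lVert X\rVert^2\lVert Y\rVert^2-\langle X,Y\rangle^2)\bigr)$, not with the block expression in Corollary~\ref{cor:Scurv} read literally --- the blocks of $[X,Y]$ are $X_0Y_0^\tp-Y_0X_0^\tp$ and $X_0^\tp Y_0-Y_0^\tp X_0$, not the commutators $[X_0,Y_0^\tp]$ and $[X_0^\tp,Y_0]$, and for your maximal plane the literal commutators would wrongly give zero; (ii) you should record, as you do implicitly, that orthogonality of the planes only requires orthogonality of their spanning vectors, which holds within each block and across blocks since distinct blocks use disjoint basis vectors $X_{ij}$.
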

\begin{proof}
We will write $\upkappa = \upkappa_Q$ below and  take $Q = I_{k,n-k}$.
By Corollary~\ref{cor:Ricci},  we have 
\begin{align*}
\overline{\updelta}_{2,r}  &= \frac{k(n-k)(n-2)}{8} - \inf_{\substack{\dim \mathbb{V}_j = 2,  \\ \mathbb{V}_j \perp \mathbb{V}_k,  j < k}} \biggl[ \sum_{j=1}^r  \upkappa(X_j, Y_j) \biggr],\\ 
\underline{\updelta}_{2,r}  &= \frac{k(n-k)(n-2)}{8} - \sup_{\substack{\dim \mathbb{V}_j = 2,  \\ \mathbb{V}_j \perp \mathbb{V}_k,  j < k}}   \biggl[ \sum_{j=1}^r \upkappa(X_j, Y_j) \biggr],
\end{align*}
where $\{ X_j,  Y_j \}$ is an orthonormal basis of the two-dimensional subspace $\mathbb{V}_j \subseteq \mathbb{T}_{I_{k,n-k}} \Gr(k,n)$,  $j =1,\dots, r$. By Corollary~\ref{cor:Scurv},  we have 
\begin{equation}\label{eq:del}
0 \le \sum_{j=1}^r \upkappa (X_j, Y_j) \le \frac{r}{4}.
\end{equation}
It remains to show that upper and lower bounds in \eqref{eq:del} are attained by some $\mathbb{V}_1,\dots,\mathbb{V}_r$.

Set $k_1 \coloneqq \lfloor k/2 \rfloor$ and $k_2 \coloneqq \lfloor (n-k)/2 \rfloor$. We may partition any $X \in \mathbb{T}_{I_{k,n-k}} \Gr(k,n)$ into a block matrix with $2 \times 2$ blocks $B_{pq} \in \mathbb{R}^{2\times 2}$:
\[
X =  \begin{bmatrix}
0 & \cdots & 0 & B_{1,1} & \cdots & B_{1,k_2+1} \\
\vdots & \ddots & \vdots  & \vdots & \ddots & \vdots \\ 
0 & \cdots & 0 & B_{k_1+1,1} & \cdots & B_{k_1+1,k_2+1} \\
B_{1,1}^\tp & \cdots & B_{k_1+1, 1}^\tp &  0 & \cdots & 0 \\
\vdots & \ddots & \vdots  & \vdots & \ddots & \vdots \\ 
B_{1, k_2+1}^\tp & \cdots & B_{k_1+1,k_2+1}^\tp  & 0 & \cdots & 0
\end{bmatrix}
\]
except in the last row and column where we are required to have
\[
B_{p,k_2 + 1}\in \mathbb{R}^{2\times (n-k - 2k_2)},  \quad B_{k_1 + 1, q} \in \mathbb{R}^{(k - 2k_1)\times 2}, \quad B_{k_1 + 1,  k_2 + 1} \in \mathbb{R}^{(k - 2k_1) \times (n-k - 2k_2)}
\]
for $p = 1,\dots, k_1$ and  $q=1,\dots, k_2$.

Let $\widehat{X}_{ij}  \in \mathbb{T}_{I_{k,n-k}} \Gr(k,n)$ be the tangent vector obtained from $X$ by setting $B_{pq} = 0$ whenever $(p,q) \ne (i,j)$.  Then clearly we have $\tr\bigl( \widehat{X}_{ij}^\tp \widehat{X}_{i'j'} \bigr) = 0$ whenever $(i,j ) \ne (i',j')$.  Since $r \le 2 k_1 k_2$,  the problem further reduces to attaining the upper and lower bounds in \eqref{eq:del} for $r = 2$ on $\Gr(2,4)$. This is a vast simplification as $X \in \mathbb{T}_{I_{2,2}}\Gr(2,4)$ is just
$X = \begin{bsmallmatrix}
0 & B \\
B^\tp & 0
\end{bsmallmatrix}$ with $B \in \mathbb{R}^{2\times 2}$. It remains to exhibit an orthonormal basis $ X_1, Y_1, X_2, Y_2 \in T_{I_{2,2}} \Gr(2,4)$ that gives the upper and lower bounds in \eqref{eq:del}. Using the formula for sectional curvature in Corollary~\ref{cor:Scurv}, we check that 
\begin{equation}\label{eq:upper bound}
X_{1} =\frac{\sqrt{2}}{2} \begin{bsmallmatrix}
0 & 0 & 1 & 0 \\
0 & 0 & 0 & 0 \\
1 & 0 & 0 & 0 \\
0 & 0 & 0 & 0 
\end{bsmallmatrix},\quad 
Y_{1} = \frac{1}{2} \begin{bsmallmatrix}
0 & 0 & 0 & 1 \\
0 & 0 & 1 & 0 \\
0 & 1 & 0 & 0 \\
1 & 0 & 0 & 0 
\end{bsmallmatrix},\quad
X_{2} =\frac{\sqrt{2}}{2} \begin{bsmallmatrix}
0 & 0 & 0 & 0 \\
0 & 0 & 0 & 1 \\
0 & 0 & 0 & 0 \\
0 & 1 & 0 & 0 
\end{bsmallmatrix},\quad
Y_{2} =\frac{1}{2} \begin{bsmallmatrix}
0 & 0 & 0 & 1 \\
0 & 0 & -1 & 0 \\
0 & -1 & 0 & 0 \\
1 & 0 & 0 & 0 
\end{bsmallmatrix}
\end{equation}
give the required upper bound $\upkappa(X_1, Y_1) + \upkappa(X_2, Y_2) = \frac{1}{2}$, whereas
\[
X_{1} =\frac{1}{2} \begin{bsmallmatrix}
0 & 0 & 1 & 0 \\
0 & 0 & 0 & 1 \\
1 & 0 & 0 & 0 \\
0 & 1 & 0 & 0 
\end{bsmallmatrix},\quad 
Y_{1} = \frac{1}{2} \begin{bsmallmatrix}
0 & 0 & 1 & 0 \\
0 & 0 & 0 & -1 \\
1 & 0 & 0 & 0 \\
0 & -1 & 0 & 0 
\end{bsmallmatrix},\quad
X_{2} =\frac{1}{2} \begin{bsmallmatrix}
0 & 0 & 0 & 1 \\
0 & 0 & 1 & 0 \\
0 & 1 & 0 & 0 \\
1 & 0 & 0 & 0 
\end{bsmallmatrix},\quad
Y_{2} =\frac{1}{2} \begin{bsmallmatrix}
0 & 0 & 0 & 1 \\
0 & 0 & -1 & 0 \\
0 & -1 & 0 & 0 \\
1 & 0 & 0 & 0 
\end{bsmallmatrix}
\]
give the required lower bound $\upkappa(X_1, Y_1) + \upkappa(X_2, Y_2) = 0$.
\end{proof}

We next compute the quartet of tensors named after Schouten, Cotton, Weyl, and Bach.
\begin{corollary}[Schouten curvature]\label{cor:Schouten}
The Schouten tensor $\mathsf{P}_Q : \mathbb{T}_Q \Gr(k,n) \times \mathbb{T}_Q\Gr(k,n) \to  \mathbb{R}$ is given by
\[
\mathsf{P}_Q(X,Y) = \frac{(n-2)}{16(k(n-k)-1)} \tr(X Y) = \frac{2(n-2)}{16(k(n-k)-1)} \tr(X_0^\tp Y_0)
\]
with $Q$, $X$, $Y$ parameterized as in \eqref{eq:para}.
\end{corollary}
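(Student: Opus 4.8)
The plan is to substitute the expressions for the Ricci and scalar curvatures from Corollary~\ref{cor:Ricci} directly into the definition of the Schouten tensor and simplify. The one subtlety that genuinely requires care is a collision of notation: the definition in Section~\ref{sec:izoo} writes $\mathsf{P}_x(v,w) = \frac{1}{n-2}\bigl(\Ric_x(v,w) - \frac{\Sca_x}{2(n-1)} \mathsf{g}_x(v,w)\bigr)$, where the symbol ``$n$'' there denotes $\dim \mathcal{M}$. For $\mathcal{M} = \Gr(k,n)$ this dimension is $d \coloneqq k(n-k)$, \emph{not} the ambient parameter $n$ of $\Gr(k,n)$. So the first step is to rewrite the definition with $d$ in place of the dimension, giving
\[
\mathsf{P}_Q(X,Y) = \frac{1}{d-2}\Bigl( \Ric_Q(X,Y) - \frac{\Sca_Q}{2(d-1)} \mathsf{g}_Q(X,Y) \Bigr).
\]

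Next I would insert the three quantities from Corollary~\ref{cor:Ricci}: $\Ric_Q(X,Y) = \frac{n-2}{8}\tr(XY)$, $\Sca_Q = \frac{d(n-2)}{8}$, and $\mathsf{g}_Q(X,Y) = \tr(XY)$ (the last because $X = X^\tp$, cf.\ \eqref{eq:metric}). Factoring the common scalar $\frac{n-2}{8}\tr(XY)$ out of the bracket leaves the purely numerical factor $1 - \frac{d}{2(d-1)} = \frac{d-2}{2(d-1)}$, whose numerator cancels the leading $\frac{1}{d-2}$. What survives is $\mathsf{P}_Q(X,Y) = \frac{n-2}{16(d-1)}\tr(XY)$, which upon restoring $d = k(n-k)$ is exactly the claimed $\frac{n-2}{16(k(n-k)-1)}\tr(XY)$. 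The second equality in the statement then follows from the identity $\tr(XY) = 2\tr(X_0^\tp Y_0)$, already recorded in Proposition~\ref{prop:fff} and evident from the parameterization \eqref{eq:para}, which merely doubles the coefficient.

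I do not anticipate any real obstacle beyond the dimension bookkeeping flagged above; the computation is a one-line algebraic simplification once the correct value of $\dim \Gr(k,n) = k(n-k)$ is tracked carefully through the definition in place of the ambient $n$.
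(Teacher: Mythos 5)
Your proposal is correct and follows exactly the paper's proof: substitute the Ricci and scalar curvatures from Corollary~\ref{cor:Ricci} into the definition of the Schouten tensor with the manifold dimension $k(n-k)$ in place of the ``$n$'' appearing in Section~\ref{sec:izoo}, then simplify. The dimension bookkeeping you flag is indeed the only subtlety, and the paper handles it the same way (its first displayed line uses $k(n-k)-2$ and $k(n-k)-1$).
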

\begin{proof}
This is a straightforward calculation from definition:
\begin{align*}
\mathsf{P}_Q(X, Y) &= \frac{1}{k(n-k)-2} \biggl[ \Ric_Q(X, Y) - \frac{\Sca_Q}{2(k(n-k)-1)} \mathsf{g}_Q(X, Y) \biggr]\\
&= \frac{n-2}{16(k(n-k)-1)} \tr(X Y). \qedhere
\end{align*}
\end{proof}

\begin{corollary}[Cotton curvature]\label{cor:Cotton}
The Cotton tensor of $\Gr(k,n)$ is zero.
\end{corollary}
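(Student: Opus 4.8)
The plan is to exploit the remarkably simple form of the Schouten tensor already computed in Corollary~\ref{cor:Schouten}. The expression $\mathsf{P}_Q(X,Y) = \frac{n-2}{16(k(n-k)-1)} \tr(XY)$ is nothing but a fixed scalar multiple of the Riemannian metric $\mathsf{g}_Q(X,Y) = \tr(X^\tp Y) = \tr(XY)$ (the last equality because $X$ is symmetric), with proportionality constant $c \coloneqq \frac{n-2}{16(k(n-k)-1)}$ that is \emph{independent of the base point} $Q$. In other words, $\mathsf{P} = c\,\mathsf{g}$ as tensor fields on $\Gr(k,n)$. This is of course forced by the Einstein property established in Corollary~\ref{cor:Ricci} together with the constancy of the scalar curvature on the homogeneous space $\Gr(k,n)$, so that the metric term in the definition of $\mathsf{P}$ absorbs the Ricci term.

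First I would record this reformulation $\mathsf{P} = c\,\mathsf{g}$. The key step is then to differentiate: since we work throughout with the Levi-Civita connection, which is metric-compatible, we have $\grad{u}\mathsf{g} = 0$ for every $u \in \mathbb{T}_Q\Gr(k,n)$, and because $c$ is a constant this yields $\grad{u}\mathsf{P} = c\,\grad{u}\mathsf{g} = 0$. Thus every covariant derivative of the Schouten tensor vanishes identically; equivalently, $\mathsf{P}$ is a parallel tensor and is in particular a Codazzi tensor in the sense defined in Section~\ref{sec:izoo}.

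Finally I would substitute into the definition $\mathsf{C}_Q(X,Y,Z) = (\grad{X}\mathsf{P}_Q)(Y,Z) - (\grad{Y}\mathsf{P}_Q)(X,Z)$ to conclude that both terms vanish, and hence $\mathsf{C}_Q \equiv 0$. There is essentially no obstacle here: the entire argument collapses to a one-liner once one observes that $\mathsf{P}$ is a constant multiple of $\mathsf{g}$. The only point worth stating explicitly is the metric-compatibility of $\nabla$, which is precisely what kills $\grad{}\mathsf{P}$; alternatively one may simply remark that the antisymmetrization defining the Cotton tensor vanishes on any Codazzi tensor, and a parallel tensor is trivially Codazzi.
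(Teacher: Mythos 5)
Your proposal is correct and follows exactly the paper's own argument: Corollary~\ref{cor:Schouten} gives $\mathsf{P} = c\,\mathsf{g}$ with $c$ constant, metric-compatibility of the Levi-Civita connection gives $\nabla\mathsf{P} = 0$, and hence $\mathsf{C} \equiv 0$ by its definition. The extra remarks on the Einstein property and Codazzi tensors are consistent with the paper but not needed.
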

\begin{proof}
By Corollary~\ref{cor:Schouten},  $\mathsf{P}$ is a constant multiple  of $\mathsf{g}$. So $\nabla \mathsf{P} = 0$, and so $\mathsf{C}$ is identically zero.
\end{proof}

\begin{corollary}[Weyl curvature]\label{cor:Weyl}
The Weyl tensor $\mathsf{W}_Q: \mathbb{T}_Q \Gr(k,n) \times \mathbb{T}_Q \Gr(k,n) \times \mathbb{T}_Q \Gr(k,n) \times \mathbb{T}_Q\Gr(k,n) \to  \mathbb{R}$ is given by
\begin{align*}
\mathsf{W}_Q (X,Y,Z,W) &=
\begin{multlined}[t]
\frac{1}{2} \tr \bigl( (XY - YX)Z W \bigr) \\
- \frac{(n-2)}{8(k(n-k) - 1)} \bigl( \tr(X Z) \tr(Y W)  - \tr(X W) \tr(Y Z) \bigr)
\end{multlined} \\
&= 
\begin{multlined}[t]
\frac12\tr\bigl( (X_0^\tp Y_0Z_0^\tp  + Z_0^\tp Y_0 X_0^\tp -  Y_0^\tp X_0 Z_0^\tp -  Z_0^\tp X_0 Y_0^\tp) W_0 \bigr)  \\
- \frac{n-2}{2 (k(n-k) - 1) } \bigl( \tr(X_0^\tp Z_0) \tr(Y_0^\tp W_0) - \tr(X_0^\tp W_0) \tr(Y_0^\tp Z_0) \bigr)
\end{multlined}
\end{align*}
with $Q$, $X$, $Y$,  $Z$, $W$ parameterized as in \eqref{eq:para}.
\end{corollary}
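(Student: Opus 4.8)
The plan is to substitute directly into the abstract definition of the Weyl tensor,
\[
\mathsf{W}_Q = \Rie_Q - \frac{1}{N-2}\,\mathsf{Z}_Q \varowedge \mathsf{g}_Q - \frac{\Sca_Q}{2N(N-1)}\,\mathsf{g}_Q \varowedge \mathsf{g}_Q,
\]
keeping in mind that the manifold dimension appearing in that definition is $N \coloneqq \dim \Gr(k,n) = k(n-k)$, not the ambient parameter $n$. The decisive simplification is that the middle term vanishes outright: by Corollary~\ref{cor:Ricci} the traceless Ricci curvature $\mathsf{Z}_Q$ is identically zero, so $\mathsf{Z}_Q \varowedge \mathsf{g}_Q = 0$ and only the Riemann term and the pure scalar term survive.

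First I would collect the two ingredients already in hand: the Riemann curvature $\Rie_Q(X,Y,Z,W) = \tfrac12\tr\bigl((XY-YX)ZW\bigr)$ from Proposition~\ref{prop:Rcurv}, and the scalar curvature $\Sca_Q = k(n-k)(n-2)/8 = N(n-2)/8$ from Corollary~\ref{cor:Ricci}. Next I would evaluate the Kulkarni--Nomizu product $\mathsf{g}_Q \varowedge \mathsf{g}_Q$ straight from \eqref{eq:kn}. Because both factors equal the metric, the four terms collapse pairwise, giving
\[
(\mathsf{g}_Q \varowedge \mathsf{g}_Q)(X,Y,Z,W) = 2\bigl(\mathsf{g}_Q(X,Z)\mathsf{g}_Q(Y,W) - \mathsf{g}_Q(X,W)\mathsf{g}_Q(Y,Z)\bigr),
\]
and since $\mathsf{g}_Q(X,Z) = \tr(X^\tp Z) = \tr(XZ)$ for symmetric tangent vectors, this is $2\bigl(\tr(XZ)\tr(YW) - \tr(XW)\tr(YZ)\bigr)$.

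Assembling the pieces, the scalar coefficient simplifies as
\[
\frac{\Sca_Q}{2N(N-1)}\cdot 2 = \frac{N(n-2)/8}{N(N-1)} = \frac{n-2}{8(k(n-k)-1)},
\]
which delivers the first displayed formula. The second formula then follows by inserting the parameterization \eqref{eq:para}: for the Riemann term I reuse the block expansion of $\tr\bigl((XY-YX)ZW\bigr)$ already recorded in Proposition~\ref{prop:Rcurv}, and for the metric terms I apply $\tr(XZ) = 2\tr(X_0^\tp Z_0)$ from the first fundamental form (Proposition~\ref{prop:fff}), whereupon the coefficient rescales to $\tfrac{n-2}{2(k(n-k)-1)}$. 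I expect no genuine obstacle; the proof is essentially a substitution. The only points demanding care are bookkeeping: using $N = k(n-k)$ consistently in place of the ambient $n$ in the definition, tracking the factor of $2$ produced by the Kulkarni--Nomizu product, and handling the factors of $2$ relating $\tr(XZ)$ to $\tr(X_0^\tp Z_0)$ when passing to the block form.
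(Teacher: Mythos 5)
Your proposal is correct and follows essentially the same route as the paper's proof: substitute the intrinsic dimension $k(n-k)$ into the defining formula, kill the middle term via $\mathsf{Z}_Q = 0$, expand the Kulkarni--Nomizu product $\mathsf{g}_Q \varowedge \mathsf{g}_Q$ to twice the antisymmetrized product of metric traces, and track the resulting coefficients. The paper merely evaluates the block form $8\bigl(\tr(X_0^\tp Z_0)\tr(Y_0^\tp W_0) - \tr(X_0^\tp W_0)\tr(Y_0^\tp Z_0)\bigr)$ in one step where you invoke Proposition~\ref{prop:fff}; the arguments are the same.
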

\begin{proof}
Let $m \coloneqq k (n-k)$.  It follows from the vanishing of $\mathsf{Z}_Q$ and the expression for $\Sca_Q$ in Corollary~\ref{cor:Ricci} that
\[
\mathsf{W}_Q =  \Rie_Q - \frac{1}{m-2} \mathsf{Z}_Q \varowedge \mathsf{g}_Q - \frac{\Sca_Q}{2m(m-1)} \mathsf{g}_Q \varowedge \mathsf{g}_Q
= \Rie_Q - \frac{n-2}{16(m-1)} \mathsf{g}_Q \varowedge \mathsf{g}_Q.
\]
Next use the two expressions of $\Rie_Q$ in Proposition~\ref{prop:Rcurv} and expand the Kulkarni--Nomizu product
\begin{align*}
\mathsf{g}_Q \varowedge \mathsf{g}_Q(X,Y,Z,W) &= 
\begin{multlined}[t]
\mathsf{g}_Q(X,Z) \mathsf{g}_Q(Y,W) - \mathsf{g}_Q(X,W) \mathsf{g}_Q(Y,Z) \\
- \mathsf{g}_Q(Y,Z) \mathsf{g}_Q(X,W) + \mathsf{g}_Q(Y,W) \mathsf{g}_Q(X,Z)
\end{multlined}\\
&=2 \bigl( \mathsf{g}_Q(X, Z) \mathsf{g}_Q(Y, W) - \mathsf{g}_Q(X, W) \mathsf{g}_Q(Y, Z) \bigr) \\
&= 8 \bigl( \tr(X_0^\tp Z_0) \tr(Y_0^\tp W_0) -  \tr(X_0^\tp W_0) \tr(Y_0^\tp Z_0) \bigr),
\end{align*}
to get the two required expressions for $\mathsf{W}_Q(X,Y,Z, W)$.
\end{proof}

\begin{corollary}[Bach curvature]\label{cor:Bach}
The Bach tensor $\mathsf{B}_Q : \mathbb{T}_Q \Gr(k,n) \times \mathbb{T}_Q\Gr(k,n) \to  \mathbb{R}$ is given by
\[
\mathsf{B}_Q(X,Y) = \frac{(n-2)^2}{32(k(n-k) - 2)} \tr(X Y) =  \frac{(n-2)^2}{16(k(n-k) - 2)} \tr(X_0^\tp Y_0)
\] 
with $Q$, $X$, $Y$ parameterized as in \eqref{eq:para}.
\end{corollary}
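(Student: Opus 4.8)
The plan is to evaluate the two terms in the definition of $\mathsf{B}_Q$ separately, working at $Q = I_{k,n-k}$ (the general case follows by $\O(n)$-equivariance, since the final answer depends only on the invariant $\tr(XY)$) and writing $m \coloneqq k(n-k) = \dim\Gr(k,n)$ for the ``$n$'' that appears in the defining formula, so that its prefactors read $\tfrac{1}{m-3}$ and $\tfrac{1}{m-2}$. I would first show that the second-covariant-derivative term $\tfrac{1}{m-3}\sum_{i,j}\nabla^2_{v_i,v_j}\mathsf{W}_Q(X,v_i,v_j,Y)$ vanishes identically because the Weyl tensor is parallel. Indeed, by Corollary~\ref{cor:Ricci} the traceless Ricci curvature $\mathsf{Z}_Q$ is zero and $\Sca_Q$ is a constant, so $\mathsf{W}_Q = \Rie_Q - \tfrac{\Sca_Q}{2m(m-1)}\,\mathsf{g}_Q\varowedge\mathsf{g}_Q$; since $\Gr(k,n) \cong \O(n)/\bigl(\O(k)\times\O(n-k)\bigr)$ is a Riemannian symmetric space we have $\nabla\Rie_Q = 0$, and $\nabla\mathsf{g}_Q = 0$ by metric compatibility, whence $\nabla\mathsf{W}_Q = 0$ and a fortiori $\nabla^2\mathsf{W}_Q = 0$. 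Only the algebraic second term then survives.

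For that term I would use the orthonormal basis $v_1,\dots,v_m$ of \eqref{eq:onb}. By Corollary~\ref{cor:Ricci}, $\Ric_Q(v_i,v_j) = \tfrac{n-2}{8}\tr(v_iv_j) = \tfrac{n-2}{8}\delta_{ij}$, so the double sum collapses and
\[
\mathsf{B}_Q(X,Y) = \frac{1}{m-2}\cdot\frac{n-2}{8}\sum_{i=1}^m \mathsf{W}_Q(X,v_i,v_i,Y).
\]
The heart of the proof is thus the contraction $\sum_i\mathsf{W}_Q(X,v_i,v_i,Y)$, which I would compute from the explicit matrix expression in Corollary~\ref{cor:Weyl}. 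Its commutator part $\tfrac12\sum_i\tr\!\bigl((Xv_i-v_iX)v_iY\bigr)$ equals $\sum_i\Rie_Q(X,v_i,v_i,Y)$, which by the very definition of Ricci curvature is $\Ric_Q(X,Y) = \tfrac{n-2}{8}\tr(XY)$. For the remaining Kulkarni--Nomizu part I would invoke the two completeness identities $\sum_i\tr(Xv_i)\tr(v_iY) = \sum_i\langle X,v_i\rangle\langle v_i,Y\rangle = \langle X,Y\rangle = \tr(XY)$ and $\sum_i\tr(v_i^2) = m$, which reduce $-\tfrac{n-2}{8(m-1)}\sum_i\bigl(\tr(Xv_i)\tr(v_iY) - \tr(XY)\tr(v_i^2)\bigr)$ to $-\tfrac{n-2}{8(m-1)}(1-m)\tr(XY) = \tfrac{n-2}{8}\tr(XY)$. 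Adding the two contributions gives $\sum_i\mathsf{W}_Q(X,v_i,v_i,Y) = \tfrac{n-2}{4}\tr(XY)$.

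Substituting this back produces
\[
\mathsf{B}_Q(X,Y) = \frac{1}{m-2}\cdot\frac{n-2}{8}\cdot\frac{n-2}{4}\,\tr(XY) = \frac{(n-2)^2}{32\bigl(k(n-k)-2\bigr)}\,\tr(XY),
\]
and the second stated equality follows from $\tr(XY) = 2\tr(X_0^\tp Y_0)$. I expect the only genuine obstacle to be the vanishing of the first term: the algebraic contraction is a two-line computation once the explicit Weyl and Ricci formulas are in hand, whereas discarding the $\nabla^2\mathsf{W}$ term cleanly rests on the parallelism $\nabla\mathsf{W}_Q = 0$. A reader unwilling to cite the symmetric-space property could instead verify $\nabla\Rie_Q = 0$ directly by differentiating $\Rie_Q(X,Y,Z,W) = \tfrac12\tr\bigl((XY-YX)ZW\bigr)$ along curves in $\Gr(k,n)$ using the second-fundamental-form data of Section~\ref{sec:extrinsic}; this stays inside the involution model at the cost of more bookkeeping.
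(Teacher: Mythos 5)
Your proposal is correct, and its computational core coincides with the paper's: both proofs collapse the double sum using $\Ric_Q(v_i,v_j)=\tfrac{n-2}{8}\delta_{ij}$, then evaluate $\sum_i \mathsf{W}_Q(X,v_i,v_i,Y)$ by splitting the Weyl formula of Corollary~\ref{cor:Weyl} into its Riemann part (which contracts to $\Ric_Q(X,Y)$ by definition) and its Kulkarni--Nomizu part (handled by Parseval, giving the same $(1-m)$ factor), arriving at exactly $\tfrac{(n-2)^2}{32(m-2)}\tr(XY)$. Where you genuinely diverge is in how the differential term is eliminated. The paper does not work from the definition in Section~\ref{sec:izoo}; it first invokes the identity \cite[Equation~(2-4)]{CH13}, which expresses $\mathsf{B}_Q$ via a divergence of the Cotton tensor plus the Ricci--Weyl contraction, and then kills the first term using $\mathsf{C}=0$ (Corollary~\ref{cor:Cotton}), a fact already established internally. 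You instead work directly from the stated definition and kill the $\nabla^2\mathsf{W}$ term by proving $\nabla\mathsf{W}=0$, which reduces (via $\mathsf{Z}=0$, constant $\Sca$, and $\nabla\mathsf{g}=0$) to the parallelism $\nabla\Rie=0$ of the symmetric space $\O(n)/\bigl(\O(k)\times\O(n-k)\bigr)$. Each route imports exactly one external fact --- the paper a conversion identity from \cite{CH13}, you the locally symmetric property --- so they are comparable in economy; yours has the advantage of staying with the paper's own definition of $\mathsf{B}$, and the symmetric-space input is even cheaper than you suggest: in the involution model the geodesic symmetry at $Q$ is simply $P \mapsto QPQ$, an ambient isometry fixing $Q$ with differential $-\id$ on $\mathbb{T}_Q\Gr(k,n)$, so $\nabla\Rie = 0$ can be certified without leaving the model and without the curve-by-curve bookkeeping you propose as a fallback.
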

\begin{proof}
Let $m \coloneqq k (n-k)$.  Using \cite[Equation~(2-4)]{CH13}, we relate $\mathsf{B}$ to the Cotton tensor $\mathsf{C}$ as
\[
\mathsf{B}_Q(X,Y) = \frac{1}{m-2}\biggl[ \sum_{i=1}^m (\grad{X_i} \mathsf{C}_Q)(X_i,  X , Y) + \sum_{i=1}^m\sum_{j=1}^m \Ric_Q(X_i,  X_j) \mathsf{W}_Q(X,  X_i,  X_j,  Y) \biggr],
\]
where $X_1,\dots,X_m \in \mathbb{T}_{Q} \Gr(k,n)$ is any orthonormal basis. It follows from the vanishing of $\mathsf{C}$ in Corollary~\ref{cor:Cotton} that
\begin{align*}
\mathsf{B}_Q(X,Y) &= \frac{n-2}{8(m - 2)} \sum_{i=1}^m  \mathsf{W}_Q(X,  X_i,  X_i,  Y) \\
&= \frac{n-2}{8(m - 2)} \biggl[ \sum_{i=1}^m \Rie_Q(X, X_i , X_i, Y) - \sum_{i=1}^m \frac{n-2}{16(m-1)} \mathsf{g}_Q \varowedge \mathsf{g}_Q(X,  X_i,  X_i,  Y) \biggr] \\
&= \frac{n-2}{8(m - 2)}  \biggl[ \Ric_Q(X,Y)
 - \frac{2(n-2)}{16(m-1)} \sum_{i=1}^m ( \mathsf{g}_Q(X, X_i) \mathsf{g}_Q(Y,  X_i)  - \mathsf{g}_Q(X, Y) \mathsf{g}_Q(X_i,  X_i) )
 \biggr] \\
 &=  \frac{n-2}{8(m - 2)}  \Ric_Q(X,Y) - \frac{(n-2)^2}{64(m-1)(m-2)} (1 - m) \mathsf{g}_Q(X,Y) \\
 &=\frac{(n-2)^2}{32(m - 2)} \mathsf{g}_Q(X,Y).
\end{align*}
Here the first and the second equalities follow from Corollaries~\ref{cor:Ricci} and \ref{cor:Weyl} respectively. The third uses the definition of Ricci curvature and the value $\mathsf{g}_Q \varowedge \mathsf{g}_Q$ calculated in the proof of Corollary~\ref{cor:Weyl}. The penultimate equality is a result of $X_1,\dots,X_m$ being an orthonormal basis.
\end{proof}

\section{Geometric insights from these expressions}\label{sec:insight}

The intrinsic curvatures in Section~\ref{sec:intrinsic} are, by definition, independent of the model we choose and apply to $\Gr(k,\mathbb{R}^n)$ as an abstract manifold. Indeed, the results in this section will all be stated for the Grassmannian $\Gr(k,\mathbb{R}^n)$, as opposed to its involution model $\Gr(k,n)$. The expressions we found in Section~\ref{sec:intrinsic} by way of the involution model $\Gr(k,n)$ permit us to concretely study the geometry of the abstract manifold $\Gr(k, \mathbb{R}^n)$, and thereby obtaining new geometric insights.  Even without going out of our way to search for such insights, we can already see a few that, as far as we know,  have never been observed before for the Grassmannian.

For example, we may deduce the following, which is mildly surprising because we do not even know how to define a Pleba\'nski tensor  \cite{Pleb}.
\begin{corollary}[Pleba\'nski curvature]\label{cor:Pleb}
The Pleba\'nski tensor of the Grassmannian is zero.
\end{corollary}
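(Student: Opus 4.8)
The plan is to exploit the one structural fact that every version of the Plebań\-ski tensor shares: it is constructed entirely out of the traceless Ricci tensor $\mathsf{Z}$. The Plebań\-ski tensor \cite{Pleb} is the Ricci analog of the Weyl tensor --- an order-$4$ tensor, possessing the algebraic symmetries of $\Rie$, obtained as a quadratic, trace-corrected expression in the trace-free Ricci tensor $\mathsf{Z}_Q$ (schematically, a combination of terms of the shape $\mathsf{Z}_Q(\cdot,\cdot)\,\mathsf{Z}_Q(\cdot,\cdot)$, of $\mathsf{g}_Q(\cdot,\cdot)\sum_i \mathsf{Z}_Q(\cdot,v_i)\mathsf{Z}_Q(v_i,\cdot)$, and of the scalar $\sum_{i,j}\mathsf{Z}_Q(v_i,v_j)^2$ multiplying $\mathsf{g}_Q \varowedge \mathsf{g}_Q$). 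What matters here is only that each summand carries at least one factor of $\mathsf{Z}_Q$, so the tensor as a whole is homogeneous of positive degree in $\mathsf{Z}_Q$ with no constant term.

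First I would write down the chosen definition of the Plebań\-ski tensor from \cite{Pleb} in the notation of Section~\ref{sec:note}, expressing it explicitly as a polynomial in the components of $\mathsf{Z}_Q$ relative to an orthonormal basis $v_1,\dots,v_{k(n-k)}$ of $\mathbb{T}_Q \Gr(k,n)$. Then I would invoke Corollary~\ref{cor:Ricci}, which establishes that $\mathsf{Z}_Q(X,Y) = 0$ for all $X,Y \in \mathbb{T}_Q \Gr(k,n)$, i.e., the Grassmannian is Einstein. Substituting $\mathsf{Z}_Q \equiv 0$ annihilates every term, and the Plebań\-ski tensor vanishes identically.

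The only genuine difficulty is precisely the one flagged in the remark preceding the statement: there is no single, universally agreed normalization of the Plebań\-ski tensor, so one might worry that the conclusion depends on the convention adopted. But this concern is illusory. Since every candidate definition is a sum of terms each at least quadratic in $\mathsf{Z}_Q$, the Einstein condition $\mathsf{Z}_Q = 0$ forces all of them to vanish simultaneously. Hence the statement holds under any reasonable definition, and the proof collapses to a one-line appeal to Corollary~\ref{cor:Ricci} --- we never need to commit to a particular formula for the tensor at all.
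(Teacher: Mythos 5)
Your proposal is correct and follows essentially the same route as the paper: the paper likewise observes that every definition of the Pleba\'nski tensor in the literature expresses its coordinates purely in terms of the coordinates of the traceless Ricci tensor $\mathsf{Z}$, and then invokes Corollary~\ref{cor:Ricci} ($\mathsf{Z}_Q \equiv 0$, the Einstein property) to conclude the tensor vanishes. Your added remark that every summand carries at least one factor of $\mathsf{Z}_Q$ (no constant term) just makes explicit the convention-independence that the paper leaves implicit.
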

An observant reader might have noticed that this is the only tensor mentioned in Section~\ref{sec:intro} whose definition did not appear in Section~\ref{sec:zoo}. The reason is that we do not know how to define the Pleba\'nski tensor in the coordinate-free manner adopted in modern mathematics. Every definition in the literature only gives its coordinates in terms of the coordinates of $\mathsf{Z}$, the traceless Ricci curvature. But as we do know from Corollary~\ref{cor:Ricci} that $\mathsf{Z} = 0$ for the Grassmannian, its Pleba\'nski tensor must be zero as well.

An observant reader might also have noticed that several of the expressions in Table~\ref{tab:c} are constant multiples of $\tr(XY)$. Therein lies two small results:
\begin{corollary}[Codazzi tensors I]\label{cor:Codazzi}
The Ricci, Schouten, and Bach curvatures of the Grassmannian are Codazzi.
\end{corollary}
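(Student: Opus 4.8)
The plan is to exploit the single observation, already flagged in the paragraph preceding the statement, that each of the three tensors is a constant multiple of the Riemannian metric $\mathsf{g}_Q$. First I would note that, since tangent vectors are symmetric matrices, $X^\tp = X$, the metric in \eqref{eq:metric} satisfies $\mathsf{g}_Q(X,Y) = \tr(X^\tp Y) = \tr(XY)$. Reading off Corollaries~\ref{cor:Ricci}, \ref{cor:Schouten}, and \ref{cor:Bach}, this means
\[
\Ric_Q = \frac{n-2}{8}\,\mathsf{g}_Q,\qquad \mathsf{P}_Q = \frac{n-2}{16(k(n-k)-1)}\,\mathsf{g}_Q,\qquad \mathsf{B}_Q = \frac{(n-2)^2}{32(k(n-k)-2)}\,\mathsf{g}_Q,
\]
where each proportionality constant depends only on $n$ and $k$, hence defines the same (locally constant) function across all $Q\in\Gr(k,n)$. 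This is, of course, just a restatement of the Einstein property $\mathsf{Z}_Q = 0$ from Corollary~\ref{cor:Ricci}, together with the fact that the passage from $\Ric$ to $\mathsf{P}$ and to $\mathsf{B}$ only rescales by constants.

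Next I would invoke metric compatibility of the Levi-Civita connection, $\nabla\mathsf{g} = 0$. For a tensor of the form $\beta = c\,\mathsf{g}_Q$ with $c$ a constant, the Leibniz rule gives $\grad{u}\beta = (\grad{u} c)\,\mathsf{g}_Q + c\,\grad{u}\mathsf{g}_Q = 0$, so each of $\Ric_Q$, $\mathsf{P}_Q$, $\mathsf{B}_Q$ is parallel. The Codazzi condition $(\grad{u}\beta)(v,w) = (\grad{v}\beta)(u,w)$ then holds trivially for any parallel $\beta$, since both sides vanish identically. This establishes the claim for all three tensors simultaneously.

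There is no real obstacle here: the entire content is the recognition that $\tr(XY)$ coincides with the metric on tangent vectors and that the coefficients are genuine constants on the homogeneous space $\Gr(k,n)$. The only point that warrants even minor care is confirming that the proportionality constants carry no dependence on the base point $Q$, which is immediate from the explicit formulas above, so that their covariant derivatives indeed drop out.
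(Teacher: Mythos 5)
Your proposal is correct and follows exactly the paper's own argument: Corollaries~\ref{cor:Ricci}, \ref{cor:Schouten}, and \ref{cor:Bach} show each tensor is a constant multiple of $\mathsf{g}$, and $\nabla\mathsf{g}=0$ makes any such tensor parallel, hence Codazzi. Your added care about the constants being independent of the base point $Q$ is a fine touch but adds nothing beyond the paper's one-line proof.
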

\begin{proof}
Corollaries~\ref{cor:Ricci}, \ref{cor:Schouten}, and \ref{cor:Bach} show that the tensors in question are all constant multiples of $\mathsf{g}$ and therefore Codazzi since $\nabla \mathsf{g} = 0$.
\end{proof}
\begin{corollary}[Codazzi tensors II]
A symmetric bilinear form $\beta$ on the Grassmannian with constant trace is Codazzi if and only if $\nabla \beta = 0$.
\end{corollary}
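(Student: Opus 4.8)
The plan is to establish the two implications separately. The reverse implication is immediate: if $\nabla\beta = 0$ then $(\grad{u}\beta)(v,w) = 0 = (\grad{v}\beta)(u,w)$ for all $u,v,w \in \mathbb{T}_Q\Gr(k,n)$, so $\beta$ is Codazzi, and its trace is automatically constant. For the forward implication, suppose $\beta$ is Codazzi with $\tr\beta \equiv c_0$. First I would record the classical divergence identity for Codazzi tensors: choosing an orthonormal frame $\{e_i\}$, the Codazzi symmetry and the symmetry of $\beta$ give $(\grad{e_i}\beta)(e_i,w) = (\grad{w}\beta)(e_i,e_i)$ for each $i$, so that summing yields $\dive\beta = d(\tr\beta) = 0$. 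Writing $m \coloneqq k(n-k) = \dim\Gr(k,\mathbb{R}^n)$ and splitting off the trace,
\[
\beta = \tfrac{c_0}{m}\,\mathsf{g} + \beta_0, \qquad \tr\beta_0 = 0,
\]
the term $\tfrac{c_0}{m}\mathsf{g}$ is parallel; since the Codazzi condition is linear and $\mathsf{g}$ is Codazzi, $\beta_0$ is a trace-free, divergence-free Codazzi tensor. As $\nabla\beta = \tfrac{c_0}{m}\nabla\mathsf{g} + \nabla\beta_0 = \nabla\beta_0$, it suffices to prove $\nabla\beta_0 = 0$. I emphasize that we need only parallelism, not $\beta_0 = 0$: the Grassmannian admits nonzero parallel trace-free tensors when it is a reducible symmetric space (e.g.\ $\Gr(2,\mathbb{R}^4)$).

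The core is a Weitzenb\"ock identity for $\beta_0$. Since $\beta_0$ is Codazzi and symmetric, the covariant derivative $\nabla\beta_0$ is a totally symmetric $3$-tensor; commuting the two derivatives in the connection Laplacian $\nabla^{*}\nabla\beta_0$ and contracting, the vanishing of $\dive\beta_0$ removes the $\nabla(\dive\beta_0)$ term while the Einstein identity $\Ric = \tfrac{n-2}{8}\mathsf{g}$ of Corollary~\ref{cor:Ricci} turns the Ricci contraction into multiplication by the scalar $\lambda \coloneqq \tfrac{n-2}{8}$. Writing $\mathring{R}$ for the action of the Riemann tensor on symmetric bilinear forms, $(\mathring{R}h)(Y,Z) \coloneqq \sum_{i,j} h(e_i,e_j)\,\Rie(e_i,Y,Z,e_j)$, this yields
\[
\nabla^{*}\nabla\beta_0 = -\lambda\,\beta_0 + \mathring{R}\beta_0 .
\]
Because $\Gr(k,n)$ is compact, pairing this with $\beta_0$, integrating, and using $\int \langle\beta_0,\nabla^{*}\nabla\beta_0\rangle = \int \lVert\nabla\beta_0\rVert^2$ gives
\[
\int_{\Gr(k,n)} \lVert\nabla\beta_0\rVert^2 = \int_{\Gr(k,n)} \bigl( \langle \beta_0, \mathring{R}\beta_0\rangle - \lambda\lVert\beta_0\rVert^2 \bigr) .
\]

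The remaining and main step is the pointwise bound $\langle\beta_0,\mathring{R}\beta_0\rangle \le \lambda\lVert\beta_0\rVert^2$, i.e.\ that $\mathring{R} \preceq \lambda\,\mathrm{Id}$ as a self-adjoint operator on the trace-free symmetric bilinear forms on $\mathbb{T}_Q\Gr(k,n)$. I expect this to be the real obstacle, as it genuinely requires the full curvature operator rather than just the Einstein or sectional-curvature data from Corollaries~\ref{cor:Ricci} and \ref{cor:Scurv}. My plan is to compute $\mathring{R}$ directly from Proposition~\ref{prop:Rcurv}: transitivity of the isometric $\O(n)$-action lets me work at $Q = I_{k,n-k}$, where $\mathbb{T}_Q\Gr(k,n) \cong \mathbb{R}^k\otimes\mathbb{R}^{n-k}$ as an $\O(k)\times\O(n-k)$-module and the curvature is $\Rie(X,Y,Z,W) = \tfrac12\tr((XY-YX)ZW)$ on the orthonormal basis \eqref{eq:onb}. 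Since $\mathring{R}$ is equivariant and self-adjoint, Schur's lemma forces it to be scalar on each isotypic summand of $\operatorname{Sym}^2(\mathbb{R}^k\otimes\mathbb{R}^{n-k})$; one then checks that on every summand orthogonal to the metric direction $I_k\otimes I_{n-k}$ the scalar is at most $\lambda$. The bound is sharp — on parallel trace-free tensors the eigenvalue equals $\lambda$, which is exactly why only $\preceq$, not $\prec$, can hold. Granting the bound, the integrand above is nonpositive while its integral equals $\int\lVert\nabla\beta_0\rVert^2 \ge 0$; hence both vanish, $\nabla\beta_0 = 0$, and therefore $\nabla\beta = 0$, completing the forward implication.
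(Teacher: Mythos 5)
Your overall route is sound: it is, in essence, a re-derivation of the theorem of Berger and Ebin \cite{BE69} that the paper simply invokes. The paper's proof of the converse is two sentences --- it cites \cite{BE69} (a Codazzi tensor with constant trace on a compact Riemannian manifold with nonnegative sectional curvature is parallel) together with $\upkappa \ge 0$ from Corollary~\ref{cor:Scurv}. Your reduction to a trace-free, divergence-free Codazzi tensor $\beta_0$, the Weitzenb\"ock identity $\nabla^{*}\nabla\beta_0 = \mathring{R}\beta_0 - \lambda\beta_0$, and the integration step on the compact manifold are all correct, as is your remark that one can only conclude $\nabla\beta_0=0$ rather than $\beta_0=0$.

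The genuine gap is at what you yourself call the ``remaining and main step'': the pointwise bound $\langle\beta_0,\mathring{R}\beta_0\rangle \le \lambda\lVert\beta_0\rVert^2$ is never proved. You propose a strategy (Schur's lemma on the isotypic decomposition of the symmetric square of $\mathbb{R}^k\otimes\mathbb{R}^{n-k}$, after which ``one then checks'' an eigenvalue bound on each summand), but that check is the entire content of the estimate and is not carried out. Moreover, your diagnosis that this step ``genuinely requires the full curvature operator rather than just the Einstein or sectional-curvature data'' is incorrect, and it is exactly this mis-identification that makes your plan far heavier than necessary. Since $\beta_0$ is symmetric, diagonalize it at the point: let $e_1,\dots,e_m$ be an orthonormal eigenbasis of $\mathbb{T}_Q\Gr(k,n)$ with $\beta_0(e_i,e_j)=\mu_i\delta_{ij}$, and set $\kappa_{ij} \coloneqq \Rie_Q(e_i,e_j,e_j,e_i) \ge 0$, the nonnegativity being immediate from the formula in Corollary~\ref{cor:Scurv}. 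Then, using the paper's definition of $\Ric$ and the Einstein identity $\Ric_Q = \lambda\,\mathsf{g}_Q$ of Corollary~\ref{cor:Ricci},
\[
\langle\beta_0,\mathring{R}\beta_0\rangle = \sum_{i,j}\kappa_{ij}\,\mu_i\mu_j,
\qquad
\lambda\lVert\beta_0\rVert^2 = \sum_{i}\Ric_Q(e_i,e_i)\,\mu_i^2 = \frac{1}{2}\sum_{i,j}\kappa_{ij}\bigl(\mu_i^2+\mu_j^2\bigr),
\]
so that
\[
\lambda\lVert\beta_0\rVert^2 - \langle\beta_0,\mathring{R}\beta_0\rangle = \frac{1}{2}\sum_{i,j}\kappa_{ij}\,(\mu_i-\mu_j)^2 \;\ge\; 0 .
\]
This three-line computation --- the heart of the Berger--Ebin argument --- closes your gap using only Corollaries~\ref{cor:Ricci} and \ref{cor:Scurv}; no equivariance or isotypic analysis is needed, and it also explains why equality holds precisely in the parallel case. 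With it, your proof is complete; without it, the proposal stops short exactly at its decisive step.
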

\begin{proof}
If  $\nabla \beta = 0$, then it is Codazzi by definition. For the converse,  we invoke the result \cite{BE69} that any Codazzi tensor with constant trace on a compact Riemannian manifold  must have  $\nabla \beta = 0$ if the sectional curvature $\upkappa \ge 0$ everywhere. By Corollary~\ref{cor:Scurv}, the Grassmannian has $\upkappa \ge 0$.
\end{proof}

The proof of Corollary~\ref{cor:Codazzi} throws up another observation. 
\begin{corollary}[Divergence-free tensors]\label{cor:divfree}
The Riemann and Weyl curvatures of the Grassmannian are divergence-free.
\end{corollary}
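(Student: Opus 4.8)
The plan is to deduce both statements from standard contracted Bianchi-type identities, reducing each divergence to a quantity that earlier results have already shown to vanish. Throughout I write $m \coloneqq k(n-k) = \dim \Gr(k,\mathbb{R}^n)$ and follow the divergence-on-the-last-argument convention of Section~\ref{sec:izoo}.

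For the Riemann curvature, I would invoke the contracted second Bianchi identity, which expresses the divergence of $\Rie$ entirely in terms of the covariant derivative of the Ricci curvature:
\[
(\dive \Rie)(X,Y,Z) = (\grad{X} \Ric)(Y,Z) - (\grad{Y} \Ric)(X,Z).
\]
By Corollary~\ref{cor:Ricci}, $\Ric = \tfrac{n-2}{8}\,\mathsf{g}$ is a \emph{constant} multiple of the metric, so $\nabla \Ric = \tfrac{n-2}{8}\,\nabla \mathsf{g} = 0$ by metric-compatibility of the Levi-Civita connection. Both terms on the right therefore vanish, giving $\dive \Rie = 0$. Equivalently, Corollary~\ref{cor:Codazzi} already records that $\Ric$ is Codazzi, which is precisely the condition that annihilates the right-hand side.

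For the Weyl curvature, I would use the classical identity relating its divergence to the Cotton tensor in dimension $m \ge 4$,
\[
\dive \mathsf{W} = (m-3)\,\mathsf{C},
\]
up to the sign fixed by one's index conventions. Since Corollary~\ref{cor:Cotton} shows that the Cotton tensor $\mathsf{C}$ of the Grassmannian vanishes identically, we obtain $\dive \mathsf{W} = 0$. In the remaining low-dimensional cases $m \le 3$ the Weyl tensor is itself identically zero, so the conclusion holds trivially there.

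The computations are genuine one-liners once the two identities are in hand; the only real care needed is bookkeeping. One must pin down sign and index placements so that the contracted Bianchi identity and the Weyl--Cotton relation are stated correctly for the last-slot divergence convention, and one must use that the proportionality scalar in $\Ric = c\,\mathsf{g}$ is a \emph{true} constant rather than merely a pointwise function---guaranteed here because $\Gr(k,\mathbb{R}^n)$ is homogeneous and the factor $\tfrac{n-2}{8}$ from Corollary~\ref{cor:Ricci} is independent of the base point. Granting these, no genuine obstacle remains.
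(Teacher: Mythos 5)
Your proposal is correct and follows essentially the same route as the paper: the Riemann part reduces $\dive \Rie$ to $\nabla \Ric = 0$ (the paper invokes Petersen's equivalence that $\Rie$ is divergence-free iff $\Ric$ is divergence-free and Codazzi, which is just the contracted second Bianchi identity you quote), and the Weyl part uses the same divergence--Cotton relation together with Corollary~\ref{cor:Cotton}, with the low-dimensional case handled identically by the vanishing of $\mathsf{W}$. The differing constant in the Weyl--Cotton identity is immaterial since $\mathsf{C} = 0$.
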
 
\begin{proof}
By Corollary~\ref{cor:Ricci}, $\Ric$ is a constant multiple of $\mathsf{g}$ and so  $\nabla \mathsf{\Ric} = 0$. Since the Riemann curvature is divergence-free if and only if the Ricci tensor is Codazzi \cite[Corollary~9.4.5]{Petersen16}, it follows from Corollary~\ref{cor:Codazzi} that $\dive \mathsf{Rie} = 0$ for the Grassmannian. The relation \cite[Equation~2-3]{CH13} between Weyl and Cotton tensors
\[
\dive \mathsf{W} = \frac{\dim \mathcal{M} - 3}{\dim \mathcal{M} - 2} \mathsf{C} 
\]
for any manifold $\mathcal{M}$ of dimension at least four, taken together with Corollary~\ref{cor:Cotton} that $\mathsf{C} = 0$, yields $\dive \mathsf{W} = 0$. For $\dim \mathcal{M} < 4$, $ \mathsf{W}$ is identically zero  \cite[Remark~2.3]{CH13}.
\end{proof} 

The delta invariants obtained in Theorem~\ref{thm:delta} have never before been calculated for a manifold as complex as the Grassmannian. These values may look quotidian to the uninitiated, but they are not. We give an example to show how the value of $\underline{\delta}_{2,r}$ found in Theorem~\ref{thm:delta} vastly improves a classical result.

A \emph{geodesic $2$-sphere} is a $2$-sphere $\mathrm{S}^2$ embedded in a Riemannian manifold $\mathcal{M}$ as a totally geodesic submanifold. One fascinating fact about the geometry of the Grassmannian is that it contains a geodesic $2$-sphere \cite{Led,Wang,Wolf,Wong68}. This is a very unique property. For instance,  $\mathbb{R}^3$ contains no geodesic $2$-sphere, even though $\mathrm{S}^2$ is, ironically, the unit sphere of $\mathbb{R}^3$. The reason is  that $\mathrm{S}^2$ is only a Riemannian submanifold but not a totally geodesic submanifold of $\mathbb{R}^3$.

A key result in \cite{Wong61} is that $\Gr(k,\mathbb{R}^n)$ contains \emph{one} geodesic $2$-sphere.  We will show in Theorem~\ref{thm:packing} that it in fact contains a product of \emph{many} geodesic $2$-spheres. To the best of our knowledge, this insight is new. It is also unusual. For example, the $3$-sphere $\mathrm{S}^3$ is known to contain a geodesic $2$-sphere but it does not contain a product of more than one copy.  Another example is the oriented Grassmannian $\oGr(2,\mathbb{R}^{n+2}) \cong \SO(n+2)/(\SO(n ) \times \SO(2))$, the set of oriented two-dimensional subspaces in $\mathbb{R}^{n+2}$. It is a close cousin of the Grassmannian, $\oGr(2, \mathbb{R}^{n+2})$ being a double cover of $\Gr(2,\mathbb{R}^{n+2})$. By \cite[Theorems~3.3 and 3.4]{CN77} and \cite[Section~5]{Klein08},  $\oGr(2,\mathbb{R}^{n+2}) $ contains $\mathrm{S}^2 \times \mathrm{S}^2$ as a totally geodesic submanifold if $n \ge 4$ but it cannot accommodate another extra copy, i.e., $\oGr(2,\mathbb{R}^{n+2}) $ does not contain $\mathrm{S}^2 \times \mathrm{S}^2 \times \mathrm{S}^2 $ as a totally geodesic submanifold  regardless of the value of $n$.  So the maximal dimension of a totally geodesic submanifold of the form $\mathrm{S}^2 \times \cdots \times \mathrm{S}^2$ in $\oGr(2,\mathbb{R}^{n+2})$ is independent of $n$. In contrast, this dimension increases at least linearly in $n$ for $\Gr(k, \mathbb{R}^n)$, as our next result shows.

\begin{theorem}[Embedding products of geodesic $2$-spheres]\label{thm:packing}
\begin{enumerate}[\upshape (a)]
\item \label{it:divf1} For any $r \le \min\{ \lfloor k/2 \rfloor,  \lfloor n/4 \rfloor \}$, the product of $r$ copies of $\mathrm{S}^2$ can be embedded as a totally geodesic submanifold of $\Gr(k, \mathbb{R}^n)$.
\item \label{it:divf2} For any $r \le 2\lfloor k/2 \rfloor \lfloor (n-k)/2 \rfloor$, an open subset of the product of $r$ copies of $\mathrm{S}^2$ can be embedded as a totally geodesic submanifold  of $\Gr(k, \mathbb{R}^n)$.
\end{enumerate}
\end{theorem}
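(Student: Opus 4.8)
The plan is to reduce everything to the construction of a single totally geodesic geodesic $\mathrm{S}^2$ sitting inside a $\Gr(2,\mathbb{R}^4)$-block, and then to assemble many mutually orthogonal, mutually commuting copies. Since $\Gr(k,n)$ is a Riemannian symmetric space, I would use the classical correspondence between totally geodesic submanifolds through a point and Lie triple systems $\mathfrak{s}\subseteq\mathbb{T}_{I_{k,n-k}}\Gr(k,n)$, i.e.\ subspaces closed under the double bracket, $[[\mathfrak{s},\mathfrak{s}],\mathfrak{s}]\subseteq\mathfrak{s}$ \cite{KN2}. The double bracket is already available: from the proof of Proposition~\ref{prop:Rcurv} and Corollary~\ref{cor:Scurv}, for tangent vectors with off-diagonal parts $X_0,Y_0,Z_0\in\mathbb{R}^{k\times(n-k)}$ one has $[[X,Y],Z]\leftrightarrow AZ_0-Z_0D$ with $A=X_0Y_0^\tp-Y_0X_0^\tp$ and $D=X_0^\tp Y_0-Y_0^\tp X_0$, together with $\Rie_Q(X,Y,Z,W)=\tfrac14\langle[[X,Y],Z],W\rangle$. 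This turns the entire problem into bookkeeping with block matrices.

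First I would build one genuine geodesic $\mathrm{S}^2$. Selecting two rows from the ``$k$-side'' and two columns from the ``$(n-k)$-side'' singles out a totally geodesic sub-Grassmannian $\Gr(2,\mathbb{R}^4)\subseteq\Gr(k,\mathbb{R}^n)$, since any such sub-block is closed under the double bracket and hence is a Lie triple system. Inside it I take the $2$-plane spanned by the tangent vectors with $X_0=I_2$ and $Y_0=J$, where $J=\begin{bsmallmatrix}0&-1\\1&0\end{bsmallmatrix}$; a one-line check gives the $\mathfrak{so}(3)$ relations $[[X,Y],X]=-4Y$ and $[[X,Y],Y]=4X$, so this $2$-plane is a Lie triple system, and Corollary~\ref{cor:Scurv} shows it has constant sectional curvature $\tfrac14$. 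The resulting totally geodesic surface is a genuine $\mathrm{S}^2$ and not $\mathbb{RP}^2$: here I invoke the classical diffeomorphism $\widetilde{\Gr}(2,\mathbb{R}^4)\cong\mathrm{S}^2\times\mathrm{S}^2$ together with $\Gr(2,4)\cong(\mathrm{S}^2\times\mathrm{S}^2)/\mathbb{Z}_2$, the deck transformation being the simultaneous antipode $(p,q)\mapsto(-p,-q)$. Because this moves each factor sphere $\mathrm{S}^2\times\{q_0\}$ to the disjoint sphere $\mathrm{S}^2\times\{-q_0\}$, the covering projection restricts to an embedding on a single factor, and $\spn\{I_2,J\}$ is precisely the tangent space of such a factor.

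Next I assemble the product for part \eqref{it:divf1}. Placing $r$ of these blocks on pairwise disjoint rows and pairwise disjoint columns, the corresponding Lie triple systems $\mathfrak{s}_1,\dots,\mathfrak{s}_r$ have disjoint matrix supports; consequently they are mutually orthogonal and their cross brackets vanish, $[X^{(i)},X^{(j)}]=0$ for $i\ne j$, since each of the relevant products $X_0^{(i)}(X_0^{(j)})^\tp$ and $(X_0^{(i)})^\tp X_0^{(j)}$ is supported on disjoint index sets. Hence $\mathfrak{s}_1\oplus\dots\oplus\mathfrak{s}_r$ is again a Lie triple system and the associated totally geodesic submanifold is the Riemannian product of the $r$ geodesic spheres, namely $(\mathrm{S}^2)^r$. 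The budget is two rows and two columns per block, i.e.\ $2r\le k$ and $2r\le n-k$; invoking the duality $\Gr(k,n)\cong\Gr(n-k,n)$ to assume $k\le n/2$, these collapse to $r\le\lfloor k/2\rfloor\le\lfloor n/4\rfloor$, the stated bound.

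For part \eqref{it:divf2} I would use \emph{both} factor spheres of each block. In one $\Gr(2,\mathbb{R}^4)$ block the two orthogonal, commuting, curvature-$\tfrac14$ Lie triple systems $\spn\{I_2,J\}$ and $\spn\{\sigma,\tau\}$, where $\sigma=\begin{bsmallmatrix}0&1\\1&0\end{bsmallmatrix}$ and $\tau=\begin{bsmallmatrix}1&0\\0&-1\end{bsmallmatrix}$, together span the whole tangent space, and their product is all of $\widetilde{\Gr}(2,4)\cong\mathrm{S}^2\times\mathrm{S}^2$. Now the two factors can no longer be separated by the $\mathbb{Z}_2$, so the image in $\Gr(2,4)$ is the full quotient and only an \emph{open} subset on which the projection is injective embeds. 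Running this over the $\lfloor k/2\rfloor\lfloor(n-k)/2\rfloor$ disjoint blocks and taking products yields a totally geodesic embedding of an open subset of $(\mathrm{S}^2)^{2\lfloor k/2\rfloor\lfloor(n-k)/2\rfloor}$, hence of $(\mathrm{S}^2)^r$ for every $r\le2\lfloor k/2\rfloor\lfloor(n-k)/2\rfloor$. The main obstacle throughout is exactly the distinction between $\mathrm{S}^2$ and $\mathbb{RP}^2$, governed by the $\mathbb{Z}_2$ in $\Gr(2,4)\cong(\mathrm{S}^2\times\mathrm{S}^2)/\mathbb{Z}_2$: its free action on a single factor upgrades the factor sphere to a genuine $\mathrm{S}^2$ in part \eqref{it:divf1}, while the impossibility of embedding both factors of a block at once is precisely what forces ``open subset'' in part \eqref{it:divf2}. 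The rest is routine verification of the $\mathfrak{so}(3)$ bracket relations, the vanishing of cross brackets on disjoint blocks, and the dimension count.
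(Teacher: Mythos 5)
Your toolkit is genuinely different from the paper's, and the block-level computations all check out against Proposition~\ref{prop:Rcurv}: the $2\times 2$ sub-block is indeed a Lie triple system, the relations $[[X,Y],X]=-4Y$, $[[X,Y],Y]=4X$ for $\spn\{I_2,J\}$ are correct and give curvature $\tfrac14$, and the double-cover argument via $\Gr(2,4)\cong(\mathrm{S}^2\times\mathrm{S}^2)/\mathbb{Z}_2$ correctly yields an embedded $\mathrm{S}^2$ rather than $\mathbb{RP}^2$. The paper instead works with the subspace-sum map $\varepsilon(\mathbb{W}_1,\dots,\mathbb{W}_r)=\mathbb{W}_0\oplus\mathbb{W}_1\oplus\cdots\oplus\mathbb{W}_r$ and cites Wong's maximal isoclinic families for the geodesic sphere in each $\Gr(2,\mathbb{V}_i)$. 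However, both of your assembly steps have genuine gaps.

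In \eqref{it:divf1}, the inference ``pairwise orthogonal, pairwise commuting Lie triple systems, each exponentiating to an embedded $\mathrm{S}^2$, hence their sum exponentiates to the embedded Riemannian product'' is false as a general implication, and your proof offers nothing beyond it. The counterexample sits inside a \emph{single} block: $\spn\{I_2,J\}$ and $\spn\{\sigma,\tau\}$ satisfy all of these hypotheses, yet the totally geodesic submanifold tangent to their sum is all of $\Gr(2,4)\cong(\mathrm{S}^2\times\mathrm{S}^2)/\mathbb{Z}_2$, a proper quotient of the product --- a fact you yourself invoke in part \eqref{it:divf2}. So bracket vanishing alone cannot rule out a finite simultaneous-antipodal identification across your $r$ blocks; a global injectivity argument is required, and this is exactly what the paper's formulation supplies: $\varepsilon$ is injective because each $\mathbb{W}_i$ is recoverable from the image $k$-plane $\mathbb{W}$ as $\mathbb{W}\cap\mathbb{V}_i$. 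Without this (easy) supplement you have only shown that \emph{some} quotient of $(\mathrm{S}^2)^r$ embeds. (A smaller defect, shared with the paper's own proof: the construction also needs $2r\le n-k$, so even with duality the stated bound $\lfloor n/4\rfloor$ is not attained when $k>n/2$.)

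In \eqref{it:divf2}, the step ``running this over the $\lfloor k/2\rfloor\lfloor(n-k)/2\rfloor$ disjoint blocks and taking products'' fails because those blocks are \emph{not} disjoint: blocks $(i,j)$ and $(i,j')$ share their two rows (only the entry supports are disjoint). Cross-brackets then do not vanish: taking $B_{i,j}=B_{i,j'}=I_2$ gives $(X_0^{(i,j)})^\tp X_0^{(i,j')}\ne 0$, hence $[X^{(i,j)},X^{(i,j')}]\ne 0$ and nonzero mixed sectional curvature, so there is no totally geodesic product of the block Grassmannians. Indeed, writing $k_1=\lfloor k/2\rfloor$, $k_2=\lfloor(n-k)/2\rfloor$, the span of all the blocks is the tangent space of the sub-Grassmannian $\Gr(2k_1,2k_1+2k_2)$, which for $k_1k_2>1$ is an irreducible symmetric space and therefore not even locally isometric to a product of $2$-spheres. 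This is precisely why the paper's proof of \eqref{it:divf2} claims much less than you do: it asserts no product or isometric structure at all, but only exhibits the composite of exponential maps $\psi\circ\rho\circ\varphi^{-1}$, shrunk until injective, as a smooth embedding of an open subset of $(\mathrm{S}^2)^r$ whose image is an open piece of that totally geodesic sub-Grassmannian. Your part \eqref{it:divf2} as written asserts a product structure that does not exist and must be recast along the paper's lines.
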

\begin{proof}
Let $r \le \min\{ \lfloor k/2 \rfloor,  \lfloor n/4 \rfloor \}$ and $\mathbb{V}_1,  \dots,  \mathbb{V}_r \subseteq \mathbb{R}^n$ be any $r$ four-dimensional subspaces that are orthogonal to each other, i.e., $\mathbb{V}_j \subseteq \bigl( \bigoplus_{i \ne j} \mathbb{V}_i \bigr)^\perp$ for all $j = 1,\dots,r$. Let $\mathbb{W}_0 \subseteq  \bigl(\bigoplus_{i=1}^r \mathbb{V}_i \bigr)^\perp$ be a $(k - 2r)$-dimensional subspace. We claim that $\im(\varepsilon)$, the image of the embedding 
\[
\varepsilon :  \Gr(2,\mathbb{V}_1) \times \cdots \times \Gr(2,\mathbb{V}_r)\to \Gr(k,\mathbb{R}^n), \quad
(\mathbb{W}_1,\dots,  \mathbb{W}_r ) \mapsto  \mathbb{W}_0  \oplus \mathbb{W}_1 \oplus\dots \oplus \mathbb{W}_r,
\]
is totally geodesic:  Let $\mathbb{W} \coloneqq \mathbb{W}_0  \oplus \mathbb{W}_1 \oplus \dots \oplus \mathbb{W}_r$ and $\mathbb{W}' \coloneqq \mathbb{W}_0 \oplus \mathbb{W}_1'  \oplus \dots \oplus \mathbb{W}'_r \in \im(\varepsilon)$. The principal vectors \cite{GVL} between $\mathbb{W}$ and $\mathbb{W}'$ consist of any orthonormal basis of $\mathbb{W}_0$ together with principal vectors of $\mathbb{W}_i$ and $\mathbb{W}'_i$, $ i = 1,\dots, r$, noting that $\mathbb{W}_i \perp \mathbb{W}'_j$ whenever $i \ne j$. A geodesic in $\Gr(k,\mathbb{R}^n)$ connecting $\mathbb{W}$ and $\mathbb{W}'$ takes the form \cite{Wong67}
\[
\gamma(t) = \mathbb{W}_0 \oplus \mathbb{W}_1(t) \oplus \cdots \oplus \mathbb{W}_r(t) \in \im(\varepsilon),
\]
where $\mathbb{W}_i(t)$ is a geodesic in $\Gr(2,\mathbb{V}_i)$ connecting $\mathbb{W}_i$ to $\mathbb{W}'_i$, $i=1,\dots, r$. By \cite[Section~4]{Wong68}, since $\mathbb{V}_i$ is four-dimensional, $\Gr(2,\mathbb{V}_i)$ contains a geodesic $2$-sphere\footnote{When $\mathbb{V}$ is four-dimensional, any maximal subset of mutually isoclinic $2$-planes in $\Gr(2,\mathbb{V})$ is a geodesic $2$-sphere.} $\Sigma^2_i$ for each $i=1,\dots, r$.  The restriction of $\varepsilon$ to $\Sigma^2_1 \times \dots \times \Sigma^2_r \cong \mathrm{S}^2 \times \dots \times \mathrm{S}^2$ ($r$ copies) gives the desired embedding in \eqref{it:divf1}.

Our proof of \eqref{it:divf2} will exploit the matrix structure afforded by the involution model $\Gr(k,n) \cong \Gr(k, \mathbb{R}^n)$.  We will construct  a totally geodesic submanifold of $\Gr(k,n)$ containing $I_{k,n-k}$. By Corollary~\ref{cor:Scurv},  $\upkappa_{I_{k,n-k}}(X,Y) \le \frac14$ for any orthonormal $X, Y \in \mathbb{T}_{I_{k,n-k}} \Gr(k,  n)$.  By \cite[Theorem~5]{Wong68}, $\upkappa_{I_{k,n-k}}(X,Y) = \frac14$ if and only if  $X$ and $Y$ are tangent vectors of a geodesic $2$-sphere in $\Gr(k,n)$ passing through $I_{k,n-k}$. Note that $X$ and $Y$ must span the tangent space of this geodesic $2$-sphere. We set $k_1 \coloneqq  \lfloor k/2 \rfloor$, $k_2 \coloneqq  \lfloor (n-k)/2 \rfloor$ as in the proof of Theorem~\ref{thm:delta}, and also $r \coloneqq 2k_1 k_2$.  Consider the commutative diagram
\[\begin{tikzcd}
	{\varphi^{-1}(U)} & {\overbrace{\mathbb{R}^{2 \times 2} \oplus \cdots \oplus \mathbb{R}^{2\times 2}}^{r/2 \text{ copies}}} & {\mathbb{T}_{I_{k,n-k}} \Gr(k,n)} \\
	U & {\underbrace{ (\mathrm{S}^2 \times \mathrm{S}^2) \times \cdots \times (\mathrm{S}^2 \times \mathrm{S}^2)}_{r/2\text{ copies}}} & {\Gr(k,n)}
	\arrow[hook, from=1-1, to=1-2]
	\arrow["\rho", from=1-2, to=1-3]
	\arrow["{\varphi}"', from=1-2, to=2-2]
	\arrow["{\psi}", from=1-3, to=2-3]
	\arrow["{\varphi^{-1}}"', from=2-1, to=1-1]
	\arrow[hook, from=2-1, to=2-2]
	\arrow["{\psi \circ \rho \circ \varphi^{-1}}"', curve={height=12ex}, dashed, from=2-1, to=2-3]
\end{tikzcd}\]
where $\varphi,  \psi$ are the exponential maps on the respective tangent spaces,  $U$ is an open subset of $\mathrm{S}^2 \times \dots \times \mathrm{S}^2$ ($r$ copies) on which $\varphi^{-1}$ is well-defined, and $\rho$ is the linear map defined by
\[
B = (B_{ij})_{i,j=1}^{k_1,  k_2} \mapsto
\begin{bmatrix}
0 & \cdots & 0 & B_{1,1} & \cdots & B_{1,k_2+1}  \\
\vdots & \ddots & \vdots  & \vdots & \ddots & \vdots \\ 
0 & \cdots & 0 & B_{k_1+1,1} & \cdots & B_{k_1+1,k_2+1} \\
B_{1,1}^\tp & \cdots & B_{k_1+1, 1}^\tp &  0 & \cdots & 0 \\
\vdots & \ddots & \vdots  & \vdots & \ddots & \vdots \\ 
B_{1, k_2+1}^\tp & \cdots & B_{k_1+1,k_2+1}^\tp  & 0 & \cdots & 0
\end{bmatrix}
\]
where we have used the same notation as in the proof of Theorem~\ref{thm:delta} and set $B_{pq}$ to be the zero matrix if either $p = k_1 + 1$ or $q = k_2 + 1$.  We will write 
$\mathbb{M}$ for the vector space of block matrices in which $B$ lies, noting that $\mathbb{M} \cong \mathbb{R}^{2 \times 2} \oplus \cdots \oplus \mathbb{R}^{2\times 2}$ is a direct sum of $k_1k_2$ copies of $\mathbb{R}^{2\times 2}$, each corresponding to a block $B_{ij}$, $i =1,\dots,k_1$, $j=1,\dots,k_2$. For concreteness, we will also choose an orthonormal basis for each copy of $\mathbb{R}^{2\times 2}$ in $\mathbb{M}$,
\[
X_1 = \frac{\sqrt{2}}{2} \begin{bmatrix}
1 & 0 \\
0 & 0
\end{bmatrix},  \quad Y_1 = \frac{1}{2} \begin{bmatrix}
0 & 1 \\
1 & 0
\end{bmatrix},\quad X_2 =  \frac{\sqrt{2}}{2} \begin{bmatrix}
0 & 0 \\
0 & 1
\end{bmatrix},\quad Y_1 = \frac{1}{2} \begin{bmatrix}
0 & 1 \\
-1 & 0
\end{bmatrix},
\]
such that $\upkappa_{I_{k,n-k}}(\rho(X_1), \rho(Y_1)) = \upkappa_{I_{k,n-k}}(\rho(X_2), \rho(Y_2)) = \frac14$.  If $Z $ lies in the $(s,t)$th copy of $\mathbb{R}^{2\times 2}$ in $\mathbb{M}$, then let $\rho(Z) \coloneqq \rho\bigl( (B_{ij})_{i,j=1}^{k_1,k_2} \bigr)$ where $B_{ij} = \delta_{is} \delta_{jt} Z$,  $i=1,\dots, k _1$, $j = 1, \dots,k_2$. By shrinking $U$ if necessary,  we may assume that $\psi$ is injective on $\rho(\varphi^{-1}(U))$. Hence $\psi \circ \rho \circ \varphi^{-1}(U)$ is the required open subset in \eqref{it:divf2}.  For $r < 2 k_1 k_2$, let $x_0 \in \mathrm{S}^2$ be any fixed point and consider the embedding 
\[
j: \underbrace{\mathrm{S}^2 \times \dots \times \mathrm{S}^2}_{r \text{ copies}} \hookrightarrow  \underbrace{\mathrm{S}^2 \times \dots \times \mathrm{S}^2}_{2k_1 k_2 \text{ copies}},\quad j(x_1,\dots,  x_r) \coloneqq (x_1,\dots,  x_r, x_0,\dots,  x_0),
\]
where the last $2k_1 k_2 - r$ coordinates are all filled with $x_0$'s. Then $U' \coloneqq j^{-1}(U)$ is an open subset of $\mathrm{S}^2 \times \dots \times \mathrm{S}^2$ ($r$ copies) and $\psi\circ \rho \circ \varphi^{-1} \circ j(U')$ is a totally geodesic submanifold of $\Gr(k,n)$.
\end{proof}
Suppose $k$ and $n$ are both even. Then the upper bound in Theorem~\ref{thm:packing}\eqref{it:divf2} is $r = k(n-k)/2$. This is sharp as the dimension of the product of $r + 1$ copies of $\mathrm{S}^2$ is $k(n-k) + 2$ and it exceeds the dimension of $\Gr(k,n)$; so a product of $r+1$ copies of $\mathrm{S}^2$ cannot be embedded in $\Gr(k,n)$.

\section{Why we favor the involution model}\label{sec:other}

As we alluded to in Section~\ref{sec:intro}, a secondary goal of this article is to demonstrate the advantages of using the involution model \eqref{eq:inv}. Here we will make some comparisons with other common models of the Grassmannian in algebraic geometry (Section~\ref{sec:pluc}), differential geometry (Section~\ref{sec:quot}), and integral geometry (Section~\ref{sec:proj}).

To elaborate, as an abstract manifold, the Grassmannian $\Gr(k,\mathbb{R}^n)$ is just the set of $k$-planes in $\mathbb{R}^n$. While any manifold, by definition, can be given local coordinates, experience tells us that charts and atlases are not as useful as one might initially think. Especially in applied mathematics, but also in pure mathematics, the task at hand is often simplified when we give $\Gr(k,\mathbb{R}^n)$ a system of global, extrinsic coordinates --- this is what we mean by a \emph{model} for  $\Gr(k,\mathbb{R}^n)$.

\subsection{Pl\"ucker model}\label{sec:pluc}

The standard model of the Grassmannian in algebraic geometry (see \cite[Lecture~6]{Harris} and \cite[Chapter~1, Section~4.1]{Sha1}) is as the set of rank-one alternating tensors in projective space, i.e., the image of the Pl\"ucker embedding:
\[
\Gr(k, \mathbb{R}^n) \cong \bigl\{  [ v_1 \wedge \dots \wedge v_k ]  \in \mathbb{P}(\Skew^k( \mathbb{R}^n)) : v_1, \dots, v_k  \in \mathbb{R}^n \text{ linearly independent} \bigr\}.
\]
While this has some desirable mathematical properties \cite[Section~1]{LY24}, its main issue is that the ambient space $\mathbb{P}(\Skew^k( \mathbb{R}^n))$ is a manifold of  exceedingly high dimension $\binom{n}{k} - 1$.  This not only presents a computational conundrum but also results in complex expressions for even relatively basic quantities. For example, the second fundamental form has been derived in \cite[Lemma~2.1 and Proposition~2.3]{AG12} for the Pl\"ucker model and both its calculation and the expression are significantly more involved than those appearing in this article. In fact for even moderate values of $k$ the expressions in \cite{AG12} are next-to-impossible to use or even compute since they involve lengthy sums of high order tensors.

One observation from the extrinsic curvatures calculated in Section~\ref{sec:extrinsic} is that the involution model is extremely unlike the Pl\"ucker model. For example, the mean curvature of the image of the Pl\"ucker embedding is well-known to be zero but it is far from zero in the involution model, as we saw in Corollary~\ref{cor:meancurv}. Given that the mean curvature is determined by the second fundamental form, this shows that the second fundamental forms of both models must be different and therefore so are their Gaussian and principal curvatures.

\subsection{Quotient models}\label{sec:quot}

The most common models of the Grassmannian in differential geometry (see \cite[Volume II, Chapter~VII]{KN} and \cite[Chapter~9]{boothby}) are as one of several quotient spaces:
\begin{equation}\label{eq:quot}
\begin{aligned}
\Gr(k, \mathbb{R}^n) &\cong \quo{\O}{n}{k}  \\
&\cong \V(k,n) / \O(k) \\
&\cong \quo{\GL}{n}{k} \\
&\cong \St(k,n) / \GL(k),
\end{aligned}
\end{equation}
where $\V(k,n) \coloneqq \{V \in \mathbb{R}^{n \times k} :  V^\tp V = I \} $ and $\St(k,n) \coloneqq \{ X \in \mathbb{R}^{n \times k} : \rank(X) = k\}$ are respectively the usual models for the Stiefel manifolds of orthonormal $k$-frames and $k$-frames in $\mathbb{R}^n$, and $\O(n) = \V(n,n)$ and $\GL(n) = \St(n,n)$.

By exploiting their homogeneous space structures, the more basic intrinsic curvatures such as Riemann,  Ricci, and sectional curvatures of $\Gr(k, \mathbb{R}^n)$ are standard calculations that are classical in differential geometry \cite{Cartan46,  do1992, Samelson58}.  However, the use of quotient spaces inevitably gives rise to formulas involving horizontal lifts of tangent vectors and arbitrary representatives of equivalence classes. This introduces layer upon layer of ambiguities requiring multiple arbitrary choices.

We will walk the reader through the calculation of Riemann curvature in $\quo{\O}{n}{k}$ to illustrate the case in point. We will delimit equivalence classes by $\lb \,\cdot\, \rb$ below. In this model,
\begin{enumerate}[\upshape (i)]
\item a point $\lb Q \rb \in \quo{\O}{n}{k}$ is a coset 
\[
\lb Q \rb =
\left\lbrace Q \begin{bmatrix}
Q_1 & 0 \\
0 & Q_2
\end{bmatrix} \in \O(n) :  (Q_1, Q_2)  \in \O(k) \times \O(n - k)
\right\rbrace,
\]
for some $Q\in \O(n)$  but $Q$ is not canonically given;

\item a tangent vector $\lb X \rb_{\lb Q \rb} \in \mathbb{T}_{\lb Q \rb} \quo{\O}{n}{k}$ is an equivalence class of pairs
\[
\lb X \rb_{\lb Q \rb} = \bigl\{
\bigl(Q,  X + \sonk \bigr) \in \O(n) \times \sosonk
\bigr \}\!\! \Bigm/ \sim
\]
where the equivalence relation is defined by
\[
\bigl(Q,  X + \sonk \bigr) \sim \bigl( Q',  X' +  \sonk \bigr)
\]
if and only if there is some $ (Q_1, Q_2)  \in \O(k) \times \O(n - k)$ with
\begin{equation}\label{eq:rep}
\begin{aligned}
Q' &= Q \begin{bmatrix}
Q_1 & 0 \\
0 & Q_2
\end{bmatrix},\\
X' +  \sonk &= \begin{bmatrix}
Q_1 & 0 \\
0 & Q_2
\end{bmatrix}^\tp \bigl( X +  \sonk \bigr) \begin{bmatrix}
Q_1 & 0 \\
0 & Q_2
\end{bmatrix}.
\end{aligned}
\end{equation}
\end{enumerate}
Evidently, in this model even an object as basic as a tangent vector is an equivalence class (defined by $\sim$) of equivalence classes (the coset $X + \sonk$). Every layer of equivalence relations introduces a layer of ambiguity but more importantly it often takes additional effort in the form extra calculations  or computations.

Writing down a tangent vector $\lb X \rb_{\lb Q \rb} $ as a pair of actual matrices $(Q_0, X_0)$ requires making three arbitrary choices: first a representative $Q_0$ of $\lb Q \rb$, followed by a representative $(Q',  X' +  \sonk)$ of $\lb X \rb_{\lb Q \rb}$, and finally a representative $X_0$ of $X' +  \sonk$. Note that these cannot be chosen arbitrarily nor a priori but need to satisfy \eqref{eq:rep}. We will give the details below.

We begin by picking a representative $Q_0 \in \pi^{-1}(\lb Q \rb) \subseteq \O(n)$ where  $\pi: \O(n)  \to \quo{\O}{n}{k}$ is the quotient map,  a Riemannian submersion. To construct the horizontal lift $ H  \in \mathbb{T}_{Q_0} \O(n)$  of a tangent vector $\lb X \rb_{\lb Q \rb} \in \mathbb{T}_{\lb Q \rb} \quo{\O}{n}{k}$, the recommendation in the classic article of Edelman--Arias--Smith \cite{EAS99} is to use the isomorphism
\[
d_{Q_{0}} \pi: \left\lbrace Y\in \mathbb{R}^{n\times n}:   Q_{0}^\tp Y =  \begin{bmatrix}
0 & B \\
-B^\tp & 0
\end{bmatrix},\;  B\in \mathbb{R}^{k \times (n-k)} \right\rbrace 
\to \mathbb{T}_{\lb Q \rb} \quo{\O}{n}{k},
\]
defined for any  $Q \in \O(n)$ and  $\lb X \rb_{\lb Q \rb} \in \mathbb{T}_{\lb Q \rb} \quo{\O}{n}{k}$, and then compute the horizontal lift of $\lb X \rb_{\lb Q \rb}$ at $Q_0$ as $H \coloneqq (d_{Q_0} \pi)^{-1}(\lb X \rb_{\lb Q \rb})$.  

To get $H$ explicitly as a matrix, we will need to pick a representative $(Q',  X')$   for $\lb X \rb_{\lb Q \rb}$. To reduce notational clutter, we assume that $Q_0 = Q$.  We emphasize that in this problem:
\begin{quote}
The matrices $Q,  Q' \in \O(n)$ and $X' \in  \mathfrak{so}(n)$ are known but $X\in \mathfrak{so}(n)$ such that $(Q,X) \sim (Q',  X')$ is unknown and needs to be computed.
\end{quote}%
Observe that we cannot simply set $Q'$ to be $Q$ since $(Q, X')$ will not satisfy \eqref{eq:rep} in general.  Indeed, as we require
\[
\bigl(Q',   X' \bigr) \sim (Q,  \bigl( {Q'}^\tp  Q)^\tp  X' ({Q'}^\tp  Q) \bigr),
\]
we will need to compute $H$ as 
\[
H = Q( Q^{\prime\tp} Q)^\tp \begin{bmatrix}
0 & B' \\
-{B'}^\tp & 0
\end{bmatrix} (Q^{\prime\tp} Q)
= Q' \begin{bmatrix}
0 & B'  \\
-B^{\prime\tp} & 0
\end{bmatrix} Q^{\prime\tp} Q,
\]
with $B'$ the upper right $k\times (n-k)$ submatrix of $X'$.

It might appear that to compute the Riemann curvature\footnote{The expression comes from applying the standard method for calculating Riemann curvature on a quotient model of any symmetric space \cite[Theorem~4.2]{Helgasson01}.}
at $\lb Q \rb \in \quo{\O}{n}{k}$, we simply follow the procedure above to compute horizontal lifts $X, Y, Z, W \in \mathbb{T}_Q \O(n)$ of $\lb X \rb_{\lb Q \rb}, \lb Y \rb_{\lb Q \rb}, \lb Z \rb_{\lb Q \rb},  \lb W \rb_{\lb Q \rb} \in \mathbb{T}_{\lb Q \rb} \quo{\O}{n}{k}$ and evaluate the expression on the right hand-side:
\begin{equation}\label{eq:rie1}
\Rie (\lb X \rb, \lb Y \rb, \lb Z \rb,  \lb W \rb)   = -\frac{1}{4}\langle [[X, Y], Z], W \rangle.
\end{equation}
But this is a notational illusion. Even if we start from the same representative $Q \in \pi^{-1}(\lb Q \rb) \subseteq \O(n)$,  the procedure above will yield horizontal lifts of tangents vectors at \emph{different} representatives 
$Q_X', Q_Y', Q_Z', Q_W' \in \pi^{-1}(\lb Q \rb)$. There is no guarantee that $Q_X' = Q_Y' = Q_Z' = Q_W' = Q$ and extra steps are necessary to align these different representatives. To align $Q_X'$ with $Q$,  we need to compute the horizontal lift of $\lb X \rb_{\lb Q \rb}$ at $Q$ as
\[
(d_{Q} \pi)^{-1}(\lb X \rb_{\lb Q \rb})  = Q'_X (d_{Q_X'} \pi)^{-1}(\lb X \rb_{\lb Q \rb}) Q_X^{\prime\tp} Q.
\]
This process has to be repeated four times to get
\[
(d_{Q} \pi)^{-1}(\lb X \rb_{\lb Q \rb}),  \quad (d_{Q} \pi)^{-1}(\lb Y \rb_{\lb Q \rb}),  \quad (d_{Q} \pi)^{-1}(\lb Z \rb_{\lb Q \rb}),\quad  (d_{Q} \pi)^{-1}(\lb W \rb_{\lb Q \rb})
\] 
before we may evaluate \eqref{eq:rie1}.

Contrast this with  Proposition~\ref{prop:Rcurv}, where the calculation of Riemann curvature in the involution model avoids all these issues and the derivation of its expression is essentially a one-liner. It is important to point out that although the expression in \eqref{eq:rie1} superficially resembles our expression in Proposition~\ref{prop:Rcurv}, this is also a notational illusion --- they are completely different. The easiest way to see this is by observing that the matrices in \eqref{eq:rie1} are all skew-symmetric whereas those in Proposition~\ref{prop:Rcurv} are all symmetric.

The goal of Edelman, Arias, and Smith in \cite{EAS99} is to extend line search optimization methods to a function $f: \quo{\O}{n}{k} \to \mathbb{R}$. This permits them to work with one search direction, i.e., a single tangent vector $\lb X \rb_{\lb Q \rb}$, at every point $\lb Q \rb$. As a result, one could get around the problem by optimizing $f \circ \pi: \O(n)  \to \mathbb{R}$ along horizontal directions. In the calculation of curvatures, we  are required to work with four tangent vectors simultaneously and thus the alignment of different representatives of a given pair of equivalence classes cannot be avoided.

Although we have elected to make our point with $\quo{\O}{n}{k}$, the issues identified above apply to every quotient model in \eqref{eq:quot}. The root of these issues is that there is no global way to describe tangent vectors of $\Gr(k, \mathbb{R}^n)$ in any of these quotient models. Indeed, the absence of such a global description is the reason why expressions for various curvatures in \cite{Wong67,  Wong68,  Cartan46,  Samelson58,  do1992} can only be given locally. The same reason accounts for the expediency of the involution model --- not only does it describe all points on the Grassmannian \eqref{eq:inv}, it describes all tangent vectors at all point in a single unified way \eqref{eq:tangent}.

\subsection{Projection model}\label{sec:proj}

The standard model of the Grassmannian in integral geometry (see \cite[Chapter~9]{Nicolaescu} and \cite[Chapter~3]{Mattila}) is as the set of projection matrices:
\begin{equation}\label{eq:proj}
\Gr(k, \mathbb{R}^n) \cong \{P \in \mathbb{S}^n: P^2 = P,\; \tr(P) = k\}.
\end{equation}
This is the model closest to the involution model. Indeed we showed in   \cite[Theorem~6.1]{LY24}  that they are two instances in an infinite family of \emph{quadratic models} parameterized by the condition number of matrices used. However they are also on two opposite ends: The projection model is the unique model in this family that represents points as singular matrices (infinitely ill-conditioned)  whereas the involution model is the unique model in this family that represents points as orthogonal matrices (perfectly conditioned). Every other model in this family represents points with matrices of condition number strictly between one and infinity.

In numerical computations, methods based on projection matrices \cite[Lecture~8]{Trefethen} are well-known to be significantly less stable than methods based on orthogonal matrices \cite[Lecture~10]{Trefethen} --- in fact this comparison is famously used to illustrate numerical stability of algorithms.
In hand calculations, the singularity of projection matrices in \eqref{eq:proj} is a handicap, especially when contrasted against the ease of inverting the orthogonal matrices in \eqref{eq:inv}.

The equations defining \eqref{eq:proj} are also less convenient than those defining \eqref{eq:inv}. Any calculations involving tangent vectors in  the involution model would require one to differentiate $Q^2 = I$ to get  $X Q + Q X = 0$. But doing the same in the projection model would require one to differentiate $P^2 = P$ to get $XP + PX = X$. The latter is more difficult to use than the former; indeed any calculation involving the latter would usually involve a change of coordinates $P \mapsto 2P - I$ to simplify but that yields exactly the involution model since $Q = 2P - I$  \cite[Proposition~3.5]{ZLK20}. Nevertheless, because of this relation between the two models, every expression we derived for the involution model in this article gives one for the projection model, up to a constant factor.

The result \cite[Proposition~13]{FL19} comes close to obtaining the principal curvatures in  Corollary~\ref{cor:prin} using the projection model. Nevertheless, while \eqref{eq:proj} was used to model the manifold, the tangent spaces in \cite{FL19} were still modeled as horizontal spaces in the quotient model $\quo{\O}{n}{k}$, making the messy calculations in Section~\ref{sec:quot} all but unavoidable. 

Lastly the involution model has a unique feature that the projection and all other quadratic models lack, namely, it is naturally embedded in a Lie group as
\[
\{Q \in \mathbb{S}^n : Q^2 =I,\; \tr(Q)=2k - n\} = \{Q \in \O(n) : Q^\tp =Q,\; \tr(Q)=2k - n\}.
\]
In this article we have used the first embedding into $\mathbb{S}^n$ to perform our curvature calculations but the second embedding into $\O(n)$ is central to some of our other works \cite{ZLK20,prod}. Indeed, the involution model is the unique candidate for the missing entry in \cite[Table~2.1]{EAS99}, the matrix representation of Grassmannian that is currently marked as `none' in the table, as there is no other embedding of $\Gr(k, \mathbb{R}^n) $ into $\mathbb{R}^{n \times n}$ with orthogonality constraints \cite{LK24a,LK24b}.

\section{Conclusion}\label{sec:con}

In  studying curvatures, it is helpful to have an illuminating instance of a manifold $\mathcal{M}$ where all different forms of curvatures can be explicitly calculated and compared side-by-side like in Table~\ref{tab:c}. We are unaware of any nontrivial examples of this in the literature. The reason is clear in retrospect: For many of these curvatures, their defining equations in terms of local coordinates are near-impossible to calculate for anything more complex than a sphere. But even when embedded in $\mathbb{R}^m$, which one may do with Nash embedding, the resulting extrinsic coordinates are still difficult to use. The key to our simple formulas in this article is that we have embedded our manifold in a space of matrices, and matrices are endowed with far richer structures --- we may multiply or decompose them; impose orthogonality or symmetry on them; calculate their determinant, norm, or rank; find their eigen- or singular values and vectors; among a myriad of yet other features.

\subsection*{Future work}

The multitude of curvatures discussed in this article might lead the reader to think that we have exhausted the topic. This is not the case.

Some tensors are beyond our reach. The \emph{obstruction tensor} \cite[Equation~3.25]{FG} is a $2$-tensor that equals the Bach tensor in Corollary~\ref{cor:Bach} for four-dimensional manifolds but $\dim \Gr(k,n) \ne 4$ if $(k,n) \notin \{(1,5),(2,4)\}$. The \emph{Lanczos tensor} \cite{Lanc} is a $3$-tensor that is an antiderivative of the Weyl tensor in Corollary~\ref{cor:Weyl} and defined as a solution to a partial differential equation, which we are not even sure has a solution for $\Gr(k,n)$. The \emph{Bel tensor} \cite{Bel} is a $4$-tensor constructed by contracting the Riemann tensor with itself in all possible ways and the \emph{Bel--Robinson tensor} does the same with the Weyl tensor but we are unable to simplify their expressions for $\Gr(k,n)$ to the extent of those in Table~\ref{tab:c}.

Although we have limited the discussions in this article to the Levi-Civita connection, the natural choice from the perspective of Riemannian geometry, there are other alternatives if we study the Grassmannian in the context of non-Riemannian geometry.  In this case the torsion, nonmetricity, and cocurvature tensors discussed at the end of Section~\ref{sec:izoo} may no longer be zero. Using a different connection permits us to study yet other curvatures like the \emph{contorsion tensor} \cite[Theorem~6.2.5]{Bleecker}, a $3$-tensor that quantifies its deviation from Levi-Civita.

When presented with a complicated $d$-tensor $T \in  \mathbb{V}^{\otimes d}$, a common gambit in mathematics and physics \cite{Bel58, besse2007, Matte53, ST69, Strichartz88} is to decompose it by decomposing the space in which it lies. More precisely, for any compact Lie group $G\subseteq \GL(\mathbb{V})$, we decompose $ \mathbb{V}^{\otimes d} = \bigoplus_{\lambda \in \widehat{G}} \mathbb{V}_{\lambda} $ into irreducible $G$-submodules, giving a decomposition $T  = \sum_{\lambda \in \widehat{G}} T_\lambda$ with $T_{\lambda} \in \mathbb{V}_{\lambda}$. An example is the \emph{Ricci decomposition} of the Riemann curvature into the Weyl, traceless Ricci, and scalar curvatures \cite[Chapter~1, Section~G]{besse2007},
\[
\Rie  = \mathsf{W} + \frac{1}{k (n-k)-2} \mathsf{Z} \varowedge \mathsf{g} + \frac{\Sca}{2 k (n-k)\bigl( k (n-k)-1\bigr)} \mathsf{g} \varowedge \mathsf{g},
\]
which we have implicitly used in our definition of $\mathsf{W}$; here $d=4$, $\mathbb{V} = \mathbb{T}_Q \Gr(k,n)$, and $G = \O\bigl(k(n-k)\bigr)$. It would be interesting to find similar relations among the curvatures in Table~\ref{tab:c}.

Last but not least, while manifold optimization is not one of our goals here, it remains at the back of our minds. Existing optimization algorithms almost exclusively rely on two quantities --- gradient and Hessian. As shown in \cite{Yau}, it is certainly conceivable to use, say, the second fundamental form to optimize a smooth function. This and other curvatures computed in Table~\ref{tab:c} may turn out to be useful in this regard.

\subsection*{Acknowledgment} We thank the two anonymous reviewers for their exceptionally helpful comments and suggestions. ZL's work is partially supported by AFOSR MURI FA9550-19-1-0005. LH's work is partially supported by a Vannevar Bush Faculty Fellowship ONR N000142312863. KY's work is partially supported by the National Key R\&D Program of China 2023YFA1009401 and the National Natural Science Foundation of China 12288201. 

\bibliographystyle{abbrv}

\end{document}